\numberwithin{equation}{section}
\numberwithin{figure}{section}
\theoremstyle{plain}
\newtheorem{thm}{\protect\theoremname}
\theoremstyle{plain}
\newtheorem{lem}[thm]{\protect\lemmaname}
\theoremstyle{plain}
\newtheorem{prop}[thm]{\protect\propositionname}
\providecommand{\lemmaname}{Lemma}
\providecommand{\propositionname}{Proposition}
\providecommand{\theoremname}{Theorem}
\begin{document}
\title{The TAP-Plefka variational principle for the spherical SK model}
\author{David Belius, Nicola Kistler}
\begin{abstract}
We reinterpret the Thouless-Anderson-Palmer approach to mean field
spin glass models as a variational principle in the spirit of the
Gibbs variational principle and the Bragg-Williams approximation.
We prove this TAP-Plefka variational principle rigorously in the case
of the spherical Sherrington-Kirkpatrick model.
\end{abstract}

\maketitle

\section{Introduction}

There are several approaches in theoretical physics and mathematics
to study the Sherrington-Kirkpatrick (SK) mean field spin glass model
\cite{SKSolvableModelOfASpinGlass} and its variants. The most successful
in physics is the replica approach, which with Parisi's replica symmetry
breaking Ansatz led him to his celebrated formula for the free energy
\cite{MezardParisiVirasoro-SpinGlassTheoryandBeyond}. The mathematically
rigorous proofs of the formula due to Guerra, Talagrand and Panchenko
are based on a subtle combination of interpolation, recursion, the
Ghirlanda-Guerra identities and an invariance property for the limiting
Gibbs measure \cite{GuerraBrokenReplicaSymmetryBounds,TalagrandTheParisiFormula,PanchenkoTheSKModel,PanchenkoTheParisiUltrametricityConjecture}.
A further approach in the physics literature is the one due to Thouless,
Anderson and Palmer (TAP) and Plefka. It originates in \cite{TAPSolutionOfSolvableModelOfASpinGlass}
as a diagrammatic expansion of the partition function of the Ising
SK model relating the free energy to the so called TAP free energy,
which is a disorder-dependent function defined on the space of magnetizations
of the spins. It claims that the free energy  equals the TAP free
energy at magnetizations that solve a set of mean field equations
and satisfy certain convergence conditions, which have not been completely
clarified. \emph{Plefka's condition} \cite{PlefkaALBfortheSpinGlassOrderParameter,PlefkaConvergenceCondOftheTAPequations}
is believed to be necessary, but it is not clear if it is also sufficient.
The high temperature analysis of \cite{TAPSolutionOfSolvableModelOfASpinGlass}
has been made rigorous in \cite{AizenmanLebowitzRuelleSomeRigorousResultsOnTheSherringtonKirckpatrick}.
The physicist's TAP approach has been adapted to spherical models
in \cite{CrisantiSommersTAPApproachtoSphericalPspinSGModels}.

In this paper we reinterpret the TAP approach as a variational principle
for the free energy, which states that the free energy equals the
maximum of the TAP free energy taken over magnetizations satisfying
appropriate conditions. We make this rigorous in the case of the spherical
Sherrington-Kirkpatrick model, and show that for this model Plefka's
condition is the only condition needed to formulate the variational
principle.

Let $H_{N}\left(\sigma\right),\sigma\in\mathbb{R}^{N}$, be the $2$-spin
spherical SK Hamiltonian which is a centered Gaussian process on $\mathbb{R}^{N}$
with covariance 
\begin{equation}
\mathbb{E}\left[H_{N}\left(\sigma\right)H_{N}\left(\sigma'\right)\right]=N\left(\sigma\cdot\sigma'\right)^{2},\label{eq: covar}
\end{equation}
which can be constructed by setting
\begin{equation}
H_{N}\left(\sigma\right)=\sqrt{N}\sum_{i,j=1}^{N}J_{ij}\sigma_{i}\sigma_{j}\label{eq: explicit construction}
\end{equation}
for iid standard Gaussian random variables $J_{ij}$ and $\sigma\in\mathbb{R}^{N}$.
Let $E$ be the uniform measure on the unit sphere in $\mathbb{R}^{N}$
and let
\begin{equation}
Z_{N}\left(\beta,h_{N}\right)=E\left[e^{\beta H_{N}\left(\sigma\right)+Nh_{N}\cdot\sigma}\right]\text{ and }F_{N}\left(\beta,h_{N}\right)=\frac{1}{N}\log Z_{N}\left(\beta,h_{N}\right)\label{eq: part func and FE}
\end{equation}
be the partition function and free energy in the presence of an external
field $h_{N}\in\mathbb{R}^{N}$. The TAP free energy for this model
is given by \cite{TAPSolutionOfSolvableModelOfASpinGlass,CrisantiSommersTAPApproachtoSphericalPspinSGModels}
\[
H_{TAP}\left(m\right)=\beta H_{N}\left(m\right)+Nm\cdot h_{N}+\frac{N}{2}\log\left(1-\left|m\right|^{2}\right)+N\frac{\beta^{2}}{2}\left(1-\left|m\right|^{2}\right)^{2}
\]
for $m\in\mathbb{R}^{N}$ with $\left|m\right|<1$, and Plefka's condition
\cite{PlefkaConvergenceCondOftheTAPequations,PlefkaALBfortheSpinGlassOrderParameter}
reads
\[
\beta\left(m\right)\le\frac{1}{\sqrt{2}},
\]
 where
\[
\beta\left(m\right)=\beta\left(1-\left|m\right|^{2}\right).
\]
We refer to the approximation
\begin{equation}
F_{N}\left(\beta,h_{N}\right)\approx\frac{1}{N}\sup_{m\in\mathbb{R}^{N}:\left|m\right|<1,\beta\left(m\right)\le\frac{1}{\sqrt{2}}}H_{TAP}\left(m\right)\label{eq: TAP-Plefka var princip}
\end{equation}
as the TAP-Plefka variational principle and prove it in the following
form.
\begin{thm}
\label{thm: main intro II}For any $\beta>0$, $h\ge0$ and any sequence
$h_{1},h_{2},\ldots$ with $\left|h_{N}\right|=h$ one has 
\begin{equation}
\left|F_{N}\left(\beta,h_{N}\right)-\frac{1}{N}\sup_{m\in\mathbb{R}^{N}:\left|m\right|<1,\beta\left(m\right)\le\frac{1}{\sqrt{2}}}H_{TAP}\left(m\right)\right|\to0\text{ in probability}.\label{eq: main intro II}
\end{equation}
\end{thm}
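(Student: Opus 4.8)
The plan is to establish, in probability, the two matching estimates $F_{N}(\beta,h_{N})\ge \frac1N\sup H_{TAP}-o(1)$ and $F_{N}(\beta,h_{N})\le \frac1N\sup H_{TAP}+o(1)$, where throughout $\sup$ is over $\{m:|m|<1,\ \beta(m)\le 1/\sqrt2\}$. Since $(J_{ij})\mapsto F_{N}$ and $(J_{ij})\mapsto\frac1N\sup H_{TAP}$ are Lipschitz with constant $O(N^{-1/2})$, the Gaussian concentration inequality shows that both quantities concentrate around their means, so it suffices to prove the two estimates in expectation, i.e.\ to identify the common limit. The mechanism for both is a band (cavity) decomposition: restricting the integral defining $Z_{N}$ to a thin band $\{\sigma:|\sigma|=1,\ |\sigma\cdot m-|m|^{2}|\le\varepsilon\}$ and writing $\sigma=m+\tau$ with $\tau\perp m$, the fact that $H_{N}$ is a quadratic form yields the exact identity
\[
H_{N}(\sigma)=H_{N}(m)+\nabla H_{N}(m)\cdot\tau+H_{N}(\tau),
\]
so the restricted partition function factors as $(\text{band volume})\times e^{\beta H_{N}(m)+Nh_{N}\cdot m}\times(\text{transverse partition function})$, the last being the partition function on the radius-$\sqrt{1-|m|^{2}}$ sphere in $m^{\perp}$ of the Hamiltonian $\beta H_{N}$ with external field the projection of $\beta\nabla H_{N}(m)+Nh_{N}$ onto $m^{\perp}$. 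Since $H_{N}(O\,\cdot)$ has the law of $H_{N}$ for orthogonal $O$, the restriction of $H_{N}$ to a codimension-one subspace is, up to the harmless factor $\sqrt{N/(N-1)}$, again a spherical SK Hamiltonian; after rescaling, the transverse model is a spherical SK model on $\mathbb{R}^{N-1}$ at inverse temperature $\beta(m)$. Plefka's condition enters exactly here: $\beta(m)\le 1/\sqrt2$ is the high-temperature condition for the $2$-spin spherical model ($1/\sqrt2$ being the threshold below which its free energy equals $\tfrac12\beta(m)^{2}$, equivalently below which the second-moment ratio $\mathbb{E}[Z^{2}]/\mathbb{E}[Z]^{2}$ stays bounded), under which the transverse free energy equals its explicit annealed value.

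For the lower bound, fix $m$ in the Plefka region with $|m|$ bounded away from $1$. The band volume contributes $\tfrac N2\log(1-|m|^{2})+o(N)$; and because $H_{N}$ is even and the transverse sphere is symmetric under $\tau\mapsto-\tau$, the transverse partition function equals $\langle\cosh\!\big((\beta\nabla H_{N}(m)+Nh_{N})\cdot\tau\big)\,e^{\beta H_{N}(\tau)}\rangle\ge\langle e^{\beta H_{N}(\tau)}\rangle$ (the average over $\tau$ on the transverse sphere), so the residual transverse field may simply be discarded; by Plefka's condition and the second-moment method (with Gaussian concentration ruling out downward deviations) the remaining field-free transverse partition function is at least $\exp\!\big(\tfrac N2\beta(m)^{2}-o(N)\big)=\exp\!\big(\tfrac{N\beta^{2}}2(1-|m|^{2})^{2}-o(N)\big)$. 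Collecting the four contributions gives $F_{N}\ge\frac1N H_{TAP}(m)-o(1)$ for each fixed $m$. To pass from this pointwise bound to $F_{N}\ge\frac1N\sup H_{TAP}-o(1)$ one must apply it at a near-maximiser of $H_{TAP}$, which is disorder-dependent — for $h=0$, $\beta>1/\sqrt2$ it is the top eigenvector of $H_{N}$ of the Plefka-critical length, and for $h>0$ it is a resolvent of the symmetrised coupling matrix applied to $h_{N}$ with spectral parameter fixed by the Plefka constraint — so the estimate is needed uniformly over the candidate magnetisations; this is handled by a chaining/net argument over the admissible transverse subspaces (using that the transverse free energy is Lipschitz in the subspace with $N$-independent constant and has exponentially small lower deviations), or bypassed by the exact solubility of the spherical model discussed below.

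For the upper bound one iterates the decomposition. Bounding $Z_{N}$ by a subexponential number of bands times the largest, $\frac1N\log Z_{N}\le\frac1N\log Z_{N}^{\mathrm{band}}(m^{(1)})+o(1)$ for the best band $m^{(1)}$; expanding the transverse partition function again around a transverse magnetisation $m^{(2)}$, and so on, the quadratic identity telescopes — with $M_{k}=m^{(1)}+\dots+m^{(k)}$ one has $H_{N}(\sigma)=H_{N}(M_{k})+\nabla H_{N}(M_{k})\cdot\tau_{k}+H_{N}(\tau_{k})$ for $\tau_{k}\perp M_{k}$ — reducing the bound after $k$ steps to $e^{\beta H_{N}(M_{k})+Nh_{N}\cdot M_{k}}$ times a band volume $e^{\frac N2\log(1-|M_{k}|^{2})+o(N)}$ times a transverse partition function carrying the accumulated field $\beta\nabla H_{N}(M_{k})+Nh_{N}$. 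One peels until (i) $\beta(1-|M_{k}|^{2})\le 1/\sqrt2$ — Plefka's condition, the natural stopping rule since every step increases $|M_{k}|$ and lowers the effective temperature — and (ii) the residual transverse field has been driven to zero by peeling along the residual-field direction, i.e.\ by running the TAP fixed-point iteration. At that point the transverse partition function is bounded above by its annealed value; the conditional annealing is legitimate because $\nabla H_{N}(M_{k})$ is independent of the restriction of $H_{N}$ to $M_{k}^{\perp}$ (all relevant cross-covariances vanish), and because the residual field has been removed that annealed value is $\exp\!\big(\tfrac{N\beta^{2}}2(1-|M_{k}|^{2})^{2}+o(N)\big)$. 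Collecting terms, $\frac1N\log Z_{N}\le\frac1N H_{TAP}(M_{k})+o(1)$ with $M_{k}$ in the Plefka region, which is the upper bound.

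The hard part is the upper bound's peeling step: one must show that the iteration driving the residual field to zero terminates after $o(N)$ steps with a total displacement small enough that $|M_{k}|$ stays bounded away from $1$, so that the accumulated band volumes cost only $o(N)$ and the combinatorial factor stays subexponential. This quantitative convergence of the TAP iteration is precisely what Plefka's condition secures, and is the spherical-model counterpart of the as-yet-unclarified convergence conditions of \cite{TAPSolutionOfSolvableModelOfASpinGlass}. One also needs the accompanying deterministic fact that the value produced by the recursion matches $\sup_{\beta(m)\le 1/\sqrt2}H_{TAP}(m)$ up to $o(N)$ — i.e.\ that any bonus from an un-absorbed residual transverse field is already contained in the constrained supremum. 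Both of these points, together with the disorder-dependence of the maximiser in the lower bound, can alternatively be circumvented by invoking the known Kosterlitz--Thouless--Jones formula for $\lim_{N}F_{N}(\beta,h)$ (obtainable from the exact Laplace-transform representation of $Z_{N}$ and Wigner's semicircle law), by showing that $\frac1N\sup_{\beta(m)\le 1/\sqrt2}H_{TAP}(m)$ converges in probability to a deterministic variational problem over spectral profiles governed by the semicircle law, and by checking the resulting algebraic identity between the two.
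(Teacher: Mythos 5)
There is a genuine gap, and it sits exactly where you flag ``the hard part'': the upper bound. Your plan is to bound the transverse partition function by its annealed value $\exp\bigl(\tfrac{N}{2}\beta^{2}(1-|M_k|^{2})^{2}\bigr)$ and to enforce Plefka's condition as a stopping rule for an adaptive peeling/TAP iteration. Three things are missing and none is routine. (i) The entropy of the adaptive choice of recentering directions: covering magnetizations in $\mathbb{R}^{N}$ costs $e^{cN\log(1/\varepsilon)}$ bands, not subexponentially many, and choosing $m^{(k)}$ from the disorder destroys the independence you invoke for conditional annealing. (ii) Termination of the iteration with $|M_{k}|$ bounded away from $1$ is asserted via ``Plefka's condition secures it,'' which is circular: Plefka's condition is the conclusion to be derived at the maximizer, not a hypothesis. (iii) If one cannot enforce Plefka at the stopping point, the resulting bound is $\sup_{\text{all }m}\{\cdots+\tfrac{N\beta^{2}}{2}(1-|m|^{2})^{2}\}$ over \emph{unconstrained} $m$, which for large $\beta$ strictly exceeds the constrained supremum (already at $h=0$, $m=0$). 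The paper's resolution is structurally different: it pre-constructs a single $\lfloor N^{3/4}\rfloor$-dimensional Krylov-type subspace $\mathcal{M}_{N}$ approximately invariant under $m\mapsto\beta N^{-1}\nabla\tilde H_{N}(m)+\tilde h_{N}$, writes $Z_{N}$ exactly as an integral over $\mathcal{M}_{N}$ of transverse partition functions with uniformly vanishing effective field, evaluates those by a coarse-grained functional $\mathcal{F}_{K}(\beta(1-|m|^{2}))$ valid at \emph{all} effective temperatures (not just the annealed value), applies Laplace over the $o(N)$-dimensional magnetization space, and only then derives Plefka's condition from negative semi-definiteness of the Hessian of the modified TAP functional at a maximizer, via a rank-one perturbation and eigenvalue interlacing forcing $\lambda_{K}(\beta(1-|m|^{2}))\ge\sqrt2+o(1)$. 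None of these ingredients appears in your sketch, and the fallback of invoking the known formula for $\lim_{N}F_{N}$ and checking an algebraic identity with $\sup_{q}\mathcal{B}(q)$ is not carried out either.

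The lower bound is essentially the paper's argument (your $\cosh\ge1$ symmetrization is a clean variant of the paper's restriction to the subspace perpendicular to the effective field), but the uniformity step is also not closed as proposed. A net or chaining argument over codimension-one or -two subspaces cannot yield an $o(1)$ error: the metric entropy of the Grassmannian at scale $\varepsilon$ is of order $N\log(1/\varepsilon)$ while Gaussian concentration of the restricted free energy density gives tails only of order $e^{-cNt^{2}}$, so balancing leaves an error of order $\sqrt{\log(1/\varepsilon)}+\varepsilon$, which is $O(1)$. The paper obtains uniformity over all transverse subspaces deterministically, from eigenvalue interlacing of minors of $S_{N}$ combined with eigenvalue rigidity: the restricted free energy depends only on the eigenvalues of the compression of $S_{N}$, and these interlace with the spectrum of $S_{N}$ regardless of the subspace. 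You would need this (or an equivalent structural fact) to legitimately evaluate the transverse partition function at the disorder-dependent near-maximizer.
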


We also include a solution of the TAP-Plefka variational problem that
reduces it from a random $N$-dimensional optimization problem to
one which is deterministic and one dimensional.
\begin{lem}
\label{lem: TAP var sol}For any $\beta,h,h_{1},h_{2},\ldots$, as
in Theorem \ref{thm: main intro II} one has 
\[
\left|\frac{1}{N}\sup_{m\in\mathbb{R}^{N}:\left|m\right|<1,\beta\left(m\right)\le\frac{1}{\sqrt{2}}}H_{TAP}\left(m\right)-{\displaystyle {\displaystyle \sup_{q\in\left[0,1\right]:\beta\left(1-q\right)\le\frac{1}{\sqrt{2}}}}\mathcal{B}\left(q\right)}\right|\to0\text{ in probability},
\]
where
\[
\mathcal{B}\left(q\right)=\mathcal{B}\left(q;\beta,h\right)=\sqrt{h^{2}q+2\beta^{2}q^{2}}+\frac{1}{2}\log\left(1-q\right)+\frac{\beta^{2}}{2}\left(1-q\right)^{2}.
\]
\end{lem}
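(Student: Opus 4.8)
The plan is to reduce the random $N$-dimensional optimization defining the left-hand side to a deterministic one-dimensional one by separating the norm of $m$ from its direction. Writing $q=|m|^{2}\in[0,1)$ and $\sigma=m/|m|$ (the direction being immaterial when $m=0$), the exact scaling $H_{N}(\sqrt{q}\,\sigma)=q\,H_{N}(\sigma)$, immediate from \eqref{eq: explicit construction}, gives
\[
\frac{1}{N}H_{TAP}(m)=\frac{\beta q}{N}H_{N}(\sigma)+\sqrt{q}\,\sigma\cdot h_{N}+\frac{1}{2}\log(1-q)+\frac{\beta^{2}}{2}(1-q)^{2}.
\]
Only the first two terms depend on $\sigma$, so maximizing first over directions and then over norms,
\[
\frac{1}{N}\sup_{|m|<1,\ \beta(m)\le\frac{1}{\sqrt{2}}}H_{TAP}(m)=\sup_{q\in I}\left(G_{N}(q)+\frac{1}{2}\log(1-q)+\frac{\beta^{2}}{2}(1-q)^{2}\right),
\]
where $I=\{q\in[0,1):\beta(1-q)\le\tfrac{1}{\sqrt{2}}\}$ and $G_{N}(q)=\sup_{|\sigma|=1}\big(\tfrac{\beta q}{N}H_{N}(\sigma)+\sqrt{q}\,\sigma\cdot h_{N}\big)$. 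Since $\mathcal{B}(q)=g(q)+\tfrac{1}{2}\log(1-q)+\tfrac{\beta^{2}}{2}(1-q)^{2}$ with $g(q):=\sqrt{h^{2}q+2\beta^{2}q^{2}}$, it suffices to prove $G_{N}(q)\to g(q)$ in probability together with enough control to pass to the supremum over $I$.

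For the core estimate I would view $G_{N}(q)$ as the ground-state energy of a $2$-spin spherical spin glass with external field. With $W_{N}=\tfrac{1}{\sqrt{N}}\cdot\tfrac{1}{2}(J+J^{T})$ one has $\tfrac{1}{N}H_{N}(\sigma)=\sigma^{T}W_{N}\sigma$ for $|\sigma|=1$, and $W_{N}$ is an orthogonally invariant Wigner matrix whose empirical spectral measure converges to the semicircle law on $[-\sqrt{2},\sqrt{2}]$, with Stieltjes transform $m_{\mathrm{sc}}(z)=z-\sqrt{z^{2}-2}$ on $(\sqrt{2},\infty)$ and with $\|W_{N}\|_{\mathrm{op}}\to\sqrt{2}$. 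Fix $q>0$ (the case $q=0$ being trivial) and set $a=\beta q>0$, $v=\sqrt{q}\,h_{N}$, $b=|v|=\sqrt{q}\,h$. The upper bound comes from completing the square: for any $z>\|W_{N}\|_{\mathrm{op}}$ and unit $\sigma$, Cauchy--Schwarz in the positive-definite form $zI-W_{N}$ together with $xy\le\tfrac{x^{2}}{4a}+ay^{2}$ gives $v\cdot\sigma\le\tfrac{1}{4a}v^{T}(zI-W_{N})^{-1}v+a\,\sigma^{T}(zI-W_{N})\sigma$, hence $a\sigma^{T}W_{N}\sigma+v\cdot\sigma\le az+\tfrac{1}{4a}v^{T}(zI-W_{N})^{-1}v$, and so $G_{N}(q)\le az+\tfrac{1}{4a}v^{T}(zI-W_{N})^{-1}v$. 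The matching lower bound follows by inserting the test direction $\hat\sigma/|\hat\sigma|$ with $\hat\sigma=(z^{*}I-W_{N})^{-1}v$ (well-defined with high probability since $\|W_{N}\|_{\mathrm{op}}\to\sqrt{2}<z^{*}$), which gives $G_{N}(q)\ge az^{*}-a\tfrac{P}{Q}+\tfrac{P}{\sqrt{Q}}$ with $P=v^{T}(z^{*}I-W_{N})^{-1}v$ and $Q=v^{T}(z^{*}I-W_{N})^{-2}v$. By orthogonal invariance of the law of $W_{N}$ and convergence of its empirical spectral measure, $v^{T}(zI-W_{N})^{-1}v\to b^{2}m_{\mathrm{sc}}(z)$ and $v^{T}(zI-W_{N})^{-2}v\to-b^{2}m_{\mathrm{sc}}'(z)$, uniformly on compact subsets of $(\sqrt{2},\infty)$; taking $z^{*}>\sqrt{2}$ to be the unique solution of $-m_{\mathrm{sc}}'(z^{*})=4a^{2}/b^{2}$, which is precisely the minimizer of $z\mapsto az+\tfrac{b^{2}}{4a}m_{\mathrm{sc}}(z)$, both bounds converge to $az^{*}+\tfrac{b^{2}}{4a}m_{\mathrm{sc}}(z^{*})$. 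A short computation with $m_{\mathrm{sc}}(z)=z-\sqrt{z^{2}-2}$ and the equation for $z^{*}$ then gives $az^{*}+\tfrac{b^{2}}{4a}m_{\mathrm{sc}}(z^{*})=\sqrt{2a^{2}+b^{2}}=g(q)$. (When $h=0$ simply $G_{N}(q)=\beta q\,\|W_{N}\|_{\mathrm{op}}\to\beta q\sqrt{2}=g(q)$.)

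To pass to the supremum over $I$, note that $|G_{N}(q)|\le\beta q\,\|W_{N}\|_{\mathrm{op}}+\sqrt{q}\,h$ is bounded uniformly in $q\in[0,1)$ with high probability, while $\tfrac{1}{2}\log(1-q)\to-\infty$ as $q\uparrow1$; hence there is a deterministic $\delta>0$ such that, with probability tending to one, both the supremum on the left-hand side and $\sup_{q\in I}\mathcal{B}(q)$ are attained on the compact set $I_{\delta}=I\cap[0,1-\delta]$. On $I_{\delta}$ the map $q\mapsto G_{N}(q)$ is a supremum of the functions $q\mapsto\beta q\,\sigma^{T}W_{N}\sigma+\sqrt{q}\,\sigma\cdot h_{N}$, each $C^{1}$ on $(0,1)$ with $q$-derivative bounded in modulus by $\beta\|W_{N}\|_{\mathrm{op}}+\tfrac{h}{2\sqrt{q}}$, which together with $|G_{N}(q)|\le\beta q\|W_{N}\|_{\mathrm{op}}+\sqrt{q}\,h$ near $q=0$ yields an equicontinuity bound for $(G_{N})$ on $I_{\delta}$ holding with high probability. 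Equicontinuity and the convergence $G_{N}(q)\to g(q)$ at each $q$ in a countable dense subset of $I_{\delta}$ upgrade to uniform convergence on $I_{\delta}$ in probability, so that $\sup_{I_{\delta}}\big(G_{N}(q)+\tfrac{1}{2}\log(1-q)+\tfrac{\beta^{2}}{2}(1-q)^{2}\big)\to\sup_{I_{\delta}}\mathcal{B}=\sup_{q\in I}\mathcal{B}(q)$ in probability, which is the assertion.

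The main obstacle is the identification of the limit of $G_{N}(q)$ in the second step: guessing the correct near-optimal direction $(z^{*}I-W_{N})^{-1}v$ and the self-consistency equation $-m_{\mathrm{sc}}'(z^{*})=4a^{2}/b^{2}$, and proving the convergence of the resolvent quadratic forms $v^{T}(zI-W_{N})^{-k}v$ with enough uniformity in $z$ and $q$ to drive the equicontinuity argument. The degenerate regimes $q=0$, $h=0$, and $q\downarrow0$ (where $z^{*}\downarrow\sqrt{2}$ reaches the spectral edge) are handled separately via the crude bound $|G_{N}(q)|\le\beta q\|W_{N}\|_{\mathrm{op}}+\sqrt{q}\,h$; the remainder is routine bookkeeping.
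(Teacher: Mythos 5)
Your proposal is correct and follows essentially the same route as the paper: separate $q=|m|^{2}$ from the direction of $m$, reduce to the ground-state energy $\sup_{|\sigma|=1}\{\beta q N^{-1}H_{N}(\sigma)+\sqrt{q}\,h_{N}\cdot\sigma\}$, and identify its limit $\sqrt{h^{2}q+2\beta^{2}q^{2}}$ via the semicircle law and rotational invariance --- your test direction $(z^{*}I-W_{N})^{-1}v$ is exactly the Lagrange-multiplier critical point used in the paper's auxiliary lemma, and your equation $-m_{\mathrm{sc}}'(z^{*})=4a^{2}/b^{2}$ is its normalization condition $\sum_{i}\sigma_{i}(\lambda_{N})^{2}=1$. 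Your completing-the-square upper bound and the explicit equicontinuity argument for interchanging the $N\to\infty$ limit with the supremum over $q$ supply details the paper leaves implicit, but they do not constitute a different method.
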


Together, Theorem \ref{thm: main intro II} and Lemma \ref{lem: TAP var sol}
show that 
\begin{equation}
F_{N}\left(\beta,h_{N}\right)\to\sup_{q\in\left[0,1\right]:\beta\left(1-q\right)\le\frac{1}{\sqrt{2}}}\mathcal{B}\left(q\right).\label{eq: free energy M}
\end{equation}
For comparison, the Parisi formula in this context \cite{TalagrandFEOftheSphericalMeanFieldModel,CrisantiSommersTheSphericalPspinInteractionSGModel}
states that
\begin{equation}
F_{N}\left(\beta,h_{N}\right)\to\inf_{q\in\left[0,1\right]}\mathcal{P}\left(q\right),\label{eq: parisi}
\end{equation}
where
\[
\mathcal{P}\left(q\right)=\frac{1}{2}h^{2}\left(1-q\right)+\frac{1}{2}\frac{q}{1-q}+\frac{1}{2}\log\left(1-q\right)+\frac{1}{2}\beta^{2}\left(1-q^{2}\right).
\]

\subsection{Discussion}

\subsubsection{The TAP-Plefka variational principle}

The TAP-Plefka variational principle (\ref{eq: TAP-Plefka var princip})
should be compared to the classical Gibbs variational principle which
states that
\begin{equation}
F_{N}\left(\beta,h_{N}\right)=\frac{1}{N}\sup_{\mathcal{G}}\left\{ \mathcal{G}\left(\beta H_{N}\left(\sigma\right)+N\sigma\cdot h_{N}\right)-H\left(\mathcal{G}||E\right)\right\} ,\label{eq: Gibbs}
\end{equation}
where the supremum is over all probability measures which are absolutely
continuous with respect to $E$, and $H\left(\mathcal{G}||E\right)$
is the relative entropy of $\mathcal{G}$ with respect to $E$. The
first term is the internal energy and the second is the entropy.

In the classical Bragg-Williams approximation \cite[Section 4.1.2]{VilfanLectureNotesinStatisticalMechanics,BraggWilliamsTheEffectofThermalAgitationonAtomicArrangementInAlloys}
in non-disordered statistical physics one restricts this $\sup$ to
simple measures $\mathcal{G}$ that are parameterized by a mean magnetization
$m\in\mathbb{R}^{N}$; in the case of $\pm1$ spins one considers
measures under which the spins $\sigma_{i}$ are independent with
mean $m_{i}$. For any $m$ the corresponding measure gives a lower
bound for the free energy, because of the Gibbs variational principle.
If the Bragg-Williams approximation is successful, maximizing over
$m$ yields the true free energy (at least to leading order). If applied
to approximate the free energy of the Curie-Weiss Hamiltonian $\frac{\beta}{N}\sum_{i,j}\sigma_{i}\sigma_{j}+h\sum_{i=1}^{N}\sigma_{i}$
one obtains a variational problem over $m\in\mathbb{R}^{N}$ that
is equivalent to
\[
\frac{1}{N}\sup_{\bar{m}\in\left[-1,1\right]}\left\{ \beta\bar{m}^{2}+h\bar{m}-\frac{1+\bar{m}}{2}\log\left(\frac{1-\bar{m}}{2}\right)-\frac{1-\bar{m}}{2}\log\left(\frac{1+\bar{m}}{2}\right)\right\} ,
\]
which also appears in the classical solution of the model via the
large deviation rate function of the binomial distribution, and is
thus indeed an accurate approximation.

In the spherical setting a product measure on the spins is not absolutely
continuous with respect to $E$, but a natural family of measures
is provided by exponential tilts of the uniform distribution given
by $e^{\lambda\sigma\cdot m}dE$ appropriately normalized, for $\lambda=\lambda\left(m\right)$
chosen so that the mean magnetization is $m$. For such a measure
the internal energy will be close to $\beta H_{N}\left(m\right)+Nm\cdot h_{N}$
and the entropy will be close to $\frac{N}{2}\log\left(1-\left|m\right|^{2}\right)$.
Thus the Bragg-Williams approximation of the free energy is
\[
\frac{1}{N}\sup_{m\in\mathbb{R}^{N}:\left|m\right|<1}\left\{ \beta H_{N}\left(m\right)+Nm\cdot h_{N}+\frac{N}{2}\log\left(1-\left|m\right|^{2}\right)\right\} ,
\]
which is in fact inaccurate, in light of (\ref{eq: TAP-Plefka var princip}).
However, the TAP-Plefka variational principle can be seen as the appropriate
modification of the Bragg-Williams approximation to obtain an accurate
approximation for this disordered system, by adding the Onsager correction
term $\frac{N}{2}\beta^{2}\left(1-\left|m\right|^{2}\right)^{2}$
and restricting the $\sup$ to $m$-s satisfying Plefka's condition.

\subsubsection{The $2$-spin model}

The $2$-spin spherical SK model, which is the model we consider
in this paper, is a much simpler model than the other Ising and spherical
SK variants. It is always replica symmetric, for all inverse temperatures
$\beta$ and external field strengths $h$, and the Parisi formula
can be written as a one parameter variational principle (see (\ref{eq: parisi})).
If the external field vanishes ($h=0$) an explicit closed form (non-variational)
formula for $\lim_{N\to\infty}F_{N}$ exists, even in low temperature.

Furthermore, the Hamiltonian can be written as $H_{N}\left(\sigma\right)=\sqrt{N}\sigma^{T}S_{N}\sigma$
for a random matrix $S_{N}$ from the Gaussian orthogonal ensemble,
and by the rotational invariance of the sphere we can work in the
diagonalizing basis of $S_{N}$, in which case $H_{N}\left(\sigma\right)=\sqrt{N}\sum_{i=1}^{N}\lambda_{i}\sigma_{i}^{2}$
where $\lambda_{i}$ are the eigenvalues of $S_{N}$. Because of this
the free energy can be computed by a random matrix approach, without
using the Parisi formula \cite{KosterlitzThoulessJonesSphericalModelofASpinGlass,BaikLeeFluctuationsOfTheFEofTheSphericalSK,GeneoveseTantariLegendrDualityOfSphericalandGaussianSpinGlasses}.
Part of our analysis also relies on random matrix considerations.
The resulting formulas (\ref{eq: main intro II}) and (\ref{eq: free energy M})
are not related to previously obtained formulas for the free energy.
Our proof is the first rigorous derivation of a TAP variational principle
based on a microcanonical analysis that yields bounds valid for finite
$N$, and where Plefka's condition appears naturally. 

\subsubsection{Previous work in the mathematical literature}

Recently in \cite{ChenPanchekoOntheTAPFEInTheMixedPspinModels} Chen
and Panchenko used the Parisi formula to verify a TAP variational
principle for mixed Ising SK models in the thermodynamic limit, that
is an equality after taking the limit $N\to\infty$, with a different
condition replacing Plefka's condition. In \cite{SubagGeometryOfGibbsMeasure}
Subag constructs for very low temperatures a decomposition of the
Gibbs measure of pure $p$-spin spherical models into pure states
in a microcanonical fashion, and notices that the log of the weight
of each pure state coincides with its TAP free energy.

Further mathematical results concern the TAP equations. These are
a system of nonlinear equations for the quenched mean magnetization
which have been interpreted as a self-consistency property; within
our framework it is natural to view the TAP equation as the critical
point equations of the TAP free energy. Bolthausen has developed an
iterative scheme for solving the TAP equations for the Ising SK model
\cite{BolthausenAnIterativeConstructionOfSoloftheTAPequations} that
converges in the whole conjectured high temperature regime. Talagrand
\cite{talagrand2003spin} and Chatterjee \cite{ChatterjeeSpinGlassesandSteinsMethod}
showed that in high enough temperature the mean magnetization of the
Ising SK Gibbs measure satisfies the TAP equations. Auffinger and
Jagganath have used the Parisi formula to prove that solutions of
the TAP equations describe the magnetization inside appropriately
defined pure states of generic mixed Ising models for all temperatures
\cite{AuffingerJagannathTAPequationsforcondGibbsmeasuresingeneric}.
Auffinger, Ben Arous \& Cerny have studied the (annealed) complexity
of TAP solutions for pure $p$-spin spherical Hamiltonians \cite{AuffingerBenArousCernyComplexityofSpinGlasses}.

\subsection{A word on the proof}

The proof of Theorem \ref{thm: main intro II} splits into a proof
of a lower bound and a proof of an upper bound for the partition function
$Z_{N}\left(\beta,h_{N}\right)$. Both are based on recentering the
Hamiltonian around magnetizations $m$ of potential pure states (a
similar recentering has been used by TAP \cite{TAPSolutionOfSolvableModelOfASpinGlass},
Bolthausen \cite{BolthausenPrivateCommunication} and Subag \cite{SubagGeometryOfGibbsMeasure}).
In general, recentering around a given $m$ gives rise to an effective
external field for the recentered Hamiltonian.

The lower bound is presented in Section \ref{sec: LB} and is proved
by considering a recentering around any magnetization $m$ that satisfies
Plefka's condition. We then restrict the integral in $Z_{N}\left(\beta,h_{N}\right)$
to a subset of the sphere which is ``centered at $m$'', namely
the intersection of the sphere with a plane that contains $m$ and
is perpendicular to both $m$ and the effective external field. The
mean energy (value of Hamiltonian and external field) on this subset
is $\beta H_{N}\left(m\right)+Nm\cdot h_{N}$, cf. the first two terms
of $H_{TAP}\left(m\right)$. The log of the measure of the subset
is approximately $\frac{N}{2}\log\left(1-\left|m\right|^{2}\right)$,
cf. the third term of $H_{TAP}\left(m\right)$. Finally the recentered
Hamiltonian on this subset turns out to be a $2$-spin Hamiltonian
on a lower dimensional sphere without external field at inverse temperature
$\beta\left(m\right)=\beta\left(1-\left|m\right|^{2}\right)$. If
Plefka's condition is satisfied this is less than the critical inverse
temperature $\beta_{c}=\frac{1}{\sqrt{2}}$, and it is therefore natural
that using the uniform measure on the subset as a reference measure
the free energy of the recentered Hamiltonian is $\frac{1}{2}\beta\left(m\right)^{2}=\frac{1}{2}\beta^{2}\left(1-\left|m\right|\right)^{2}$,
cf. the last term of $H_{TAP}\left(m\right)$ (the Onsager correction).
In this way we show that the subset contributes approximately $\exp\left(H_{TAP}\left(m\right)\right)$
to $Z_{N}\left(\beta,h\right)$. This shows that $H_{TAP}\left(m\right)$
is a lower bound of the free energy for any $m$ satisfying Plefka's
condition. Note that it also provides a natural interpretation of
the terms in $H_{TAP}\left(m\right)$, and of Plefka's condition as
the condition that a pure state should effectively be in high temperature.

The upper bound is significantly harder and is proved in Section
\ref{sec: UB}. It involves the construction of a low-dimensional
subspace of magnetizations $\mathcal{M}_{N}$ with the property that
after recentering around any $m\in\mathcal{M}_{N}$, the effective
external field is again almost completely contained in $\mathcal{M}_{N}$.
We write the integral in $Z_{N}\left(\beta,h_{N}\right)$ as a double
integral first over $\mathcal{M}_{N}$ and then over the perpendicular
space $\mathcal{M}_{N}^{\bot}$. For a fixed $m\in\mathcal{M}_{N}$
the integral over the perpendicular space $\mathcal{M}_{N}^{\bot}$
is seen to be related to a partition function without external field
at a higher effective temperature, and is shown to be close to the
exponential of a modified TAP energy, with the Onsager correction
$\frac{\beta^{2}}{2}\left(1-\left|m\right|^{2}\right)^{2}$ replaced
by a different, not entirely explicit, expression. The integral in
$Z_{N}\left(\beta,h_{N}\right)$ thus reduces to an integral of the
exponential of the modified TAP energy over the low-dimensional space
$\mathcal{M}_{N}$, and by the Laplace method the log of the integral
turns into the supremum over the modified TAP energy over \emph{all}
$m$. We then show that if the Hessian at a critical point of the
modified TAP energy is negative semi-definite, as it must be at any
local maximum, then $m$ satisfies Plefka's condition and furthermore
the modified TAP energy and the original TAP energy $H_{TAP}\left(m\right)$
are close. From this the upper bound on $Z_{N}\left(\beta,h_{N}\right)$
is seen to follow.

In Section \ref{sec: TAPvarSol} we prove Lemma \ref{lem: TAP var sol}.
In the next section we fix notation and recall some basic facts.

\subsubsection*{Acknowledgments}

The first author thanks Erwin Bolthausen and Giuseppe Genovese for
valuable discussions on a draft of this article. The second author
wishes to express his gratitude to Markus Petermann for a long-standing
discussion on spin glasses, and to Anton Wakolbinger for encouragement.

\section{Notation and basic facts}

The letter $c$ denotes a constant that does not depend on $N$, possibly
a different one each time it is used.

Let $\left(\Omega,\mathcal{A},\mathbb{P}\right)$ be a probability
space with random variables $J_{ij},i,j\ge1$ that are iid standard
Gaussians. Define
\[
H_{N}\left(\sigma\right)=\sqrt{N}\sum_{i,j=1}^{N}J_{i,j}\sigma_{i}\sigma_{j}\text{\,for }\sigma\in\mathbb{R}^{N}.
\]
For any $\lambda\in\mathbb{R}$ and $\sigma\in\mathbb{R}^{N}$ we
have
\begin{equation}
H_{N}\left(\lambda\sigma\right)=\lambda^{2}H_{N}\left(\sigma\right).\label{eq: scaling}
\end{equation}
Let $S_{N}$ be the $N\times N$ matrix given by 
\[
\left(S_{N}\right)_{ij}=\frac{J_{ij}+J_{ji}}{2}.
\]
Note that
\[
H_{N}\left(\sigma\right)=\sqrt{N}\sigma^{T}S_{N}\sigma,
\]
and
\[
\nabla H_{N}\left(\sigma\right)=2\sqrt{N}S_{N}\sigma.
\]
For this reason the $2$-spin Hamiltonian gradient is linear, i.e.
\begin{equation}
\nabla H_{N}\left(\sigma_{1}+\sigma_{2}\right)=\nabla H_{N}\left(\sigma_{1}\right)+\nabla H_{N}\left(\sigma_{2}\right)\text{ for all }\sigma_{1},\sigma_{2}\in\mathbb{R}^{N}.\label{eq: gradient linear}
\end{equation}

We will use, especially in the upper bound, that the empirical spectral
distribution of $S_{N}$ converges to the semi-circle law. Let $\sqrt{N}\theta_{1}^{N}<\ldots<\sqrt{N}\theta_{N}^{N}$
be the eigenvalues of the matrix $S_{N}$. We have that
\[
\frac{1}{N}\sum_{i=1}^{N}\delta_{\theta_{i}^{N}}\to\mu\left(x\right)dx\text{ in distribution, }\mathbb{P}-a.s.,
\]
where 
\begin{equation}
\mu\left(x\right)=\frac{1}{\pi}\sqrt{2-x^{2}}1_{\left[-\sqrt{2},\sqrt{2}\right]}.\label{eq: semi-circle law}
\end{equation}
In addition if we let
\begin{equation}
\theta_{u}=\inf\left\{ \theta:\int_{-\sqrt{2}}^{\theta}\mu\left(x\right)dx=u\right\} ,\label{eq: classical location}
\end{equation}
then
\begin{equation}
\theta_{i}^{N}=\theta_{\frac{i}{N}}+o\left(1\right)\text{ for }i=1,\ldots,N,\label{eq: rigidity}
\end{equation}
where the $o\left(1\right)$ terms tend to zero $\mathbb{P}$-a.s.
uniformly in $i$ (see e.g. Theorem 2.9 \cite{BenaychKnowlesLecsOnLocSemicircleLaw}).

For instance from the fact the eigenvalue of largest magnitude is
of order $\sqrt{N}$ one can deduce that
\begin{equation}
\sup_{\sigma\in\mathbb{R}^{N}:\left|\sigma\right|\le1}\left|H_{N}\left(\sigma\right)\right|\le cN\text{ and }\sup_{\sigma\in\mathbb{R}^{N}:\left|\sigma\right|\le1}\left|\nabla H_{N}\left(\sigma\right)\right|\le cN,\label{eq: max}
\end{equation}
for all $N$ large enough, almost surely. The latter implies that
\begin{equation}
\left|H_{N}\left(\sigma^{1}\right)-H_{N}\left(\sigma^{2}\right)\right|\le cN\left|\sigma^{1}-\sigma^{2}\right|\text{ for all }\sigma^{i}\in\mathbb{R}^{N},\left|\sigma^{i}\right|\le1,i=1,2.\label{eq: Lipschitz}
\end{equation}

We let $E^{M}$ denote the uniform measure on the unit sphere of $\mathbb{R}^{M}$.
When $M=N$ we drop the superscript and write $E$. If $\mathcal{U}$
is a linear subspace of $\mathbb{R}^{N}$ we let $E^{\mathcal{U}}$
denote the uniform measure on the unit sphere of $\mathbb{R}^{N}$
intersected with $\mathcal{U}$.

The surface area of the $N$-dimensional sphere of radius $r$ is
$\frac{2\pi^{\frac{N}{2}}}{\Gamma\left(\frac{N}{2}\right)}r^{N-1}$,
and for any unit vector $v$ the inner product $\sigma\cdot v$ has
a density under $E$ given by
\begin{equation}
E\left[\sigma\cdot v=dx\right]=\frac{1}{\sqrt{\pi}}\frac{\Gamma\left(\frac{N}{2}\right)}{\Gamma\left(\frac{N-1}{2}\right)}\left(1-x^{2}\right)^{\frac{N-3}{2}}dx.\label{eq: one inner prod density}
\end{equation}
More generally for any linear subspace $\mathcal{U}\subset\mathbb{R}^{N}$
of dimension $M$ the the projection $\tilde{\sigma}$ of $\sigma$
onto $\mathcal{U}$ has density
\begin{equation}
E\left[d\tilde{\sigma}\right]=\frac{1}{\pi^{\frac{M}{2}}}\frac{\Gamma\left(\frac{N}{2}\right)}{\Gamma\left(\frac{N-M}{2}\right)}\left(1-\left|\tilde{\sigma}\right|^{2}\right)^{\frac{N-M-2}{2}}d\tilde{\sigma},\label{eq: high dim density}
\end{equation}
with respect to the standard Lebesgue measure on $\mathbb{R}^{N}$
restricted to $\mathcal{U}$.

\section{\label{sec: LB}Lower bound}

In this section we show the following lower bound for the free energy.
\begin{prop}
\label{prop: 2-spin TAP LB}For $\beta,h,h_{1},h_{2},\ldots$ as in
Theorem \ref{thm: main intro II} one has 
\begin{equation}
F_{N}\left(\beta,h_{N}\right)\ge\frac{1}{N}\sup_{m\in\mathbb{R}^{N}:\left|m\right|<1,\beta\left(m\right)\le\frac{1}{\sqrt{2}}}H_{TAP}\left(m\right)+o\left(1\right),\label{eq: 2-spin TAP LB}
\end{equation}
where the $o\left(1\right)$ term tends to zero $\mathbb{P}$-a.s.
\end{prop}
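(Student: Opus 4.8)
The plan is to fix any $m$ with $\left|m\right|<1$ and $\beta\left(m\right)\le\frac{1}{\sqrt{2}}$, and lower-bound $Z_{N}\left(\beta,h_{N}\right)$ by restricting the integral over the sphere to a well-chosen slice ``centered at $m$''. Writing $\sigma=m+\tau$ and using $\left|\sigma\right|=1$, the recentering identity $H_{N}\left(\sigma\right)=H_{N}\left(m\right)+\nabla H_{N}\left(m\right)\cdot\tau+H_{N}\left(\tau\right)$ (valid exactly for the $2$-spin model by \eqref{eq: gradient linear}, since the gradient is linear) shows that on the affine slice $\left\{\sigma:\sigma\cdot m=\left|m\right|^{2},\ \sigma\cdot g_{m}=\text{const}\right\}$, where $g_{m}$ is (the component orthogonal to $m$ of) $\beta\nabla H_{N}\left(m\right)+Nh_{N}$, the cross term $\beta\nabla H_{N}\left(m\right)\cdot\tau+Nh_{N}\cdot\tau$ is constant. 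Choosing the constants so that this term equals $0$ and $\sigma\cdot m=\left|m\right|^{2}$, the ``energy'' $\beta H_{N}\left(\sigma\right)+Nh_{N}\cdot\sigma$ on the slice equals $\beta H_{N}\left(m\right)+Nm\cdot h_{N}+\beta H_{N}\left(\tau\right)$, matching the first two terms of $H_{TAP}\left(m\right)$ exactly.

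Next I would identify the slice with a lower-dimensional sphere. The set $\left\{\sigma\in\mathbb{R}^{N}:\left|\sigma\right|=1,\ \sigma\cdot m=\left|m\right|^{2},\ \sigma\cdot g_{m}=0\right\}$ is a sphere of radius $\sqrt{1-\left|m\right|^{2}}$ (up to the negligible correction from fixing $\sigma\cdot g_{m}$) sitting in an $(N-2)$-dimensional subspace $\mathcal{V}$; on it $\tau=\sigma-m$ ranges over the centered sphere of that radius in $\mathcal{V}$. By the scaling \eqref{eq: scaling}, $\beta H_{N}\left(\tau\right)=\beta\left(1-\left|m\right|^{2}\right)H_{N}\left(\hat\tau\right)$ for $\hat\tau$ a unit vector in $\mathcal{V}$, so the recentered Hamiltonian on the slice is a $2$-spin spherical Hamiltonian on the $(N-2)$-sphere in $\mathcal{V}$ at inverse temperature $\beta\left(m\right)=\beta\left(1-\left|m\right|^{2}\right)$, with no external field. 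Using \eqref{eq: high dim density} (projecting onto the span of $m$ and $g_{m}$) the $E$-measure of a thin slab around this slice contributes $\frac{N-2}{2}\log\left(1-\left|m\right|^{2}\right)+o\left(N\right)$ to $\log Z_N$, which up to $o\left(N\right)$ is the third term $\frac{N}{2}\log\left(1-\left|m\right|^{2}\right)$ of $H_{TAP}\left(m\right)$. It remains to show that for the $2$-spin spherical Hamiltonian on an $M$-sphere (here $M=N-2$) at inverse temperature $\beta'=\beta\left(m\right)\le\frac{1}{\sqrt{2}}=\beta_{c}$, the free energy $\frac{1}{M}\log E^{\mathcal{V}}\left[e^{\beta' H_{M}\left(\hat\tau\right)}\right]$ converges to $\frac{1}{2}\left(\beta'\right)^{2}$; combined with $M=N-2=N\left(1-\left|m\right|^{2}\right)^{0}$... more precisely multiplying by $M/N\to 1$ this gives exactly $N\frac{\beta^{2}}{2}\left(1-\left|m\right|^{2}\right)^{2}$, the Onsager term. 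This high-temperature free energy computation for the $2$-spin spherical model is classical: in the diagonalizing basis $H_{M}\left(\hat\tau\right)=\sqrt{M}\sum\lambda_{i}\hat\tau_{i}^{2}$, and a Gaussian/Laplace evaluation of $E^{M}\left[e^{\beta'\sqrt{M}\sum\lambda_{i}\hat\tau_{i}^{2}}\right]$ using the semicircle law \eqref{eq: semi-circle law} yields $\frac12\left(\beta'\right)^2$ for $\beta'\le\beta_c$; I would either cite this or reprove it via \eqref{eq: rigidity}. Finally, since all estimates are uniform in $m$ over the compact constraint set and the error terms are $\mathbb P$-a.s.\ $o\left(N\right)$, dividing by $N$ and taking the supremum over admissible $m$ gives \eqref{eq: 2-spin TAP LB}.

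I expect the main obstacle to be making the slice argument rigorous at the level of measures rather than heuristically: one must integrate over a genuine $N$-dimensional neighborhood (a thin slab $\left\{\left|\sigma\cdot m-\left|m\right|^{2}\right|\le\varepsilon,\ \left|\sigma\cdot g_{m}\right|\le\varepsilon\right\}$) rather than the measure-zero slice, control the variation of the cross term $\beta\nabla H_{N}\left(m\right)\cdot\tau+Nh_{N}\cdot\tau$ and of $H_{N}\left(\tau\right)$ over the slab using the Lipschitz bound \eqref{eq: Lipschitz} and \eqref{eq: max}, show the slab's $E$-measure is $\exp\left(\frac{N}{2}\log\left(1-\left|m\right|^{2}\right)+o\left(N\right)\right)$ via \eqref{eq: high dim density}, and verify that conditioning $E$ on the slab yields (approximately) the uniform measure $E^{\mathcal{V}}$ on the relevant lower-dimensional sphere so that the classical high-temperature computation applies. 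A secondary point requiring care is uniformity in $m$: near the Plefka boundary $\beta\left(m\right)=\beta_{c}$ and near $\left|m\right|\to 1$ one must check the $o\left(N\right)$ errors do not blow up, which follows since the constraint set $\left\{\left|m\right|<1,\ \beta\left(m\right)\le\frac{1}{\sqrt{2}}\right\}$ forces $\left|m\right|$ into a compact subinterval of $[0,1)$ when $\beta>\beta_c$, and when $\beta\le\beta_c$ the relevant free energies are continuous up to $\left|m\right|=1$ where $H_{TAP}$ degenerates harmlessly.
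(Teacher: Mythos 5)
Your plan is essentially the paper's proof: recenter around $m$, restrict the integral to a thin slab where both $\hat\sigma\cdot m$ and the effective field component are $O(\varepsilon)$, factor the integral into the slab's measure (giving $\tfrac{N}{2}\log(1-|m|^2)+o(N)$ via \eqref{eq: high dim density}) times the partition function of the recentered $2$-spin Hamiltonian on a codimension-two sphere at inverse temperature $\beta(m)$, whose free energy is $\tfrac12\beta(m)^2$ under Plefka's condition. One point your plan underestimates: the "classical high-temperature computation" you propose to cite is an almost-sure statement for a \emph{fixed} sphere, but here the supremum ranges over uncountably many $m$, each producing a different random restriction of $H_N$ to a different codimension-two subspace $\langle v_1,v_2\rangle^{\bot}$, so you need the formula $\tfrac1N\log E^{\langle u,v\rangle^{\bot}}[e^{\beta' H_N}]\to\tfrac12(\beta')^2$ to hold $\mathbb{P}$-a.s.\ \emph{uniformly over all pairs} $u,v$ and all $\beta'\in[0,\tfrac1{\sqrt2}]$ simultaneously. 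This is not automatic from the fixed-sphere result; the paper devotes Lemma \ref{lem: High temp in two slice} to it, using eigenvalue interlacing together with the rigidity estimate \eqref{eq: rigidity} to compare the eigenvalues of the restricted quadratic form to those of a fixed $(N-2)\times(N-2)$ minor. Your closing discussion of "uniformity in $m$" addresses a different (and less serious) issue, the behavior near $|m|\to1$ and near the Plefka boundary; the subspace-uniformity is the step your plan would need to supply to be complete.
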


We prove this by noting that the partition function is certainly larger
than the integral of $e^{\beta H_{N}\left(\sigma\right)+N\sigma\cdot h_{N}}$
over a slice $\left\{ \sigma:\left|\sigma\cdot m-\left|m\right|^{2}\right|<\varepsilon\right\} $
for any $m$ inside the unit ball and $\varepsilon>0$. On this slice
we recenter the spins 
\[
\hat{\sigma}=\sigma-m,
\]
and use the decomposition 
\begin{equation}
H_{N}\left(\sigma\right)=H_{N}\left(m\right)+\nabla H_{N}\left(m\right)\cdot\hat{\sigma}+H_{N}\left(\hat{\sigma}\right),\label{eq: decomposition-1}
\end{equation}
which holds deterministically, to note that the integral over the
slice is essentially the partition function of a $2$-spin Hamiltonian
on an $N-1$-dimensional sphere of radius $1-\left|m\right|^{2}$
with mean $\beta H_{N}\left(m\right)$ and external field $\beta\nabla H_{N}\left(m\right)+Nh_{N}$.
By further restricting the integral to a subspace where the external
field vanishes the Onsager correction term $\frac{1}{2}\beta^{2}\left(1-\left|m\right|^{2}\right)^{2}$
of the TAP free energy arises as the free energy of the partition
function of this recentered Hamiltonian without external field. Plefka's
condition arises as the condition that the recentered Hamiltonian
is in high temperature.

By the second moment method and concentration of measure one can show
the following.
\begin{lem}
\label{lem: High temp}It holds that
\begin{equation}
\sup_{\beta\in\left[0,\frac{1}{\sqrt{2}}\right]}\left|\frac{1}{N}\log E\left[\exp\left(\beta H_{N}\left(\sigma\right)\right)\right]-\frac{\beta^{2}}{2}\right|\to0,\quad\ensuremath{\mathbb{P}}-\ensuremath{a.s.}\label{eq: high temp uniform in beta}
\end{equation}
\end{lem}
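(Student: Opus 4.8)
The plan is to prove \eqref{eq: high temp uniform in beta} first for each fixed $\beta\in[0,\tfrac1{\sqrt2}]$, by the second moment method combined with Gaussian concentration, and then to upgrade to uniformity in $\beta$ by a convexity argument. Throughout, write $\Phi_N(\beta)=\tfrac1N\log E\!\left[e^{\beta H_N(\sigma)}\right]$. For $|\sigma|=1$ the random variable $H_N(\sigma)$ is centered Gaussian of variance $N$ by \eqref{eq: covar}, so $\mathbb{E}\big[e^{\beta H_N(\sigma)}\big]=e^{N\beta^2/2}$, whence $\mathbb{E}\big[E[e^{\beta H_N(\sigma)}]\big]=e^{N\beta^2/2}$ and, by Jensen's inequality, $\mathbb{E}[\Phi_N(\beta)]\le\tfrac{\beta^2}2$. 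For the second moment, let $\sigma^1,\sigma^2$ be independent with law $E$ and set $R=\sigma^1\cdot\sigma^2$; by \eqref{eq: covar} the sum $H_N(\sigma^1)+H_N(\sigma^2)$ is centered Gaussian of variance $2N(1+R^2)$, so
\[
\mathbb{E}\Big[\big(E[e^{\beta H_N(\sigma)}]\big)^2\Big]=e^{N\beta^2}\,E^{\otimes2}\!\left[e^{N\beta^2R^2}\right].
\]
By \eqref{eq: one inner prod density} and rotational invariance $R$ has density proportional to $(1-x^2)^{(N-3)/2}$ on $[-1,1]$, with normalization growing only polynomially in $N$, and Laplace's method gives $\tfrac1N\log E^{\otimes2}\big[e^{N\beta^2R^2}\big]\to\sup_{x\in[-1,1]}\big(\beta^2x^2+\tfrac12\log(1-x^2)\big)$. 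An elementary analysis of $f(x)=\beta^2x^2+\tfrac12\log(1-x^2)$ --- it is even, $f(0)=0$, and for $x>0$ one has $f'(x)=x\big(2\beta^2-(1-x^2)^{-1}\big)$ --- shows that this supremum equals $0$ when $\beta\le\tfrac1{\sqrt2}$ and is strictly positive when $\beta>\tfrac1{\sqrt2}$; this is precisely where Plefka's threshold enters.

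Consequently, for $\beta\le\tfrac1{\sqrt2}$ the second moment equals $e^{N\beta^2+o(N)}$, so the ratio $\mathbb{E}\big[(E[e^{\beta H_N(\sigma)}])^2\big]\big/\big(\mathbb{E}[E[e^{\beta H_N(\sigma)}]]\big)^2$ is $e^{o(N)}$, and the Paley--Zygmund inequality yields $\mathbb{P}\big(E[e^{\beta H_N(\sigma)}]\ge\tfrac12 e^{N\beta^2/2}\big)\ge e^{-o(N)}$. Next, for fixed $\beta$ the map $J\mapsto\Phi_N(\beta)$ is Lipschitz in the Gaussian disorder with constant $O(N^{-1/2})$, since $\nabla_J\Phi_N(\beta)=\tfrac\beta N\langle\nabla_JH_N(\sigma)\rangle$ is an average, with respect to the tilted measure $\propto e^{\beta H_N(\sigma)}\,dE$, of vectors of Euclidean norm $|\nabla_JH_N(\sigma)|=\sqrt N\,|\sigma|^2=\sqrt N$; Gaussian concentration then gives $\mathbb{P}(|\Phi_N(\beta)-\mathbb{E}\Phi_N(\beta)|>t)\le2e^{-cNt^2}$, hence $\Phi_N(\beta)-\mathbb{E}\Phi_N(\beta)\to0$ $\mathbb{P}$-a.s.\ by Borel--Cantelli. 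If $\mathbb{E}\Phi_N(\beta)\le\tfrac{\beta^2}2-2\delta$ held along a subsequence, concentration would force $\mathbb{P}(\Phi_N(\beta)\ge\tfrac{\beta^2}2-\delta)\le2e^{-cN\delta^2}$, contradicting the Paley--Zygmund bound $e^{-o(N)}$ (note $\tfrac1N\log\tfrac12\to0$). Together with $\mathbb{E}[\Phi_N(\beta)]\le\tfrac{\beta^2}2$ this gives $\mathbb{E}\Phi_N(\beta)\to\tfrac{\beta^2}2$, and therefore $\Phi_N(\beta)\to\tfrac{\beta^2}2$ $\mathbb{P}$-a.s.\ for every fixed $\beta\in[0,\tfrac1{\sqrt2}]$.

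Finally, to make the convergence uniform in $\beta$, note that $\beta\mapsto\Phi_N(\beta)$ is convex with derivative $\Phi_N'(\beta)=\tfrac1N\langle H_N(\sigma)\rangle$, which by \eqref{eq: max} is bounded by $c$ uniformly in $N$ on an almost sure event; hence the $\Phi_N$ are equi-Lipschitz on $[0,\tfrac1{\sqrt2}]$, as is the limit $\beta\mapsto\tfrac{\beta^2}2$. Applying the fixed-$\beta$ statement simultaneously for all $\beta$ in a countable dense subset of $[0,\tfrac1{\sqrt2}]$ and then covering the interval by a finite net upgrades pointwise convergence to uniform almost sure convergence, which is \eqref{eq: high temp uniform in beta}. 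I expect the second moment/Laplace step to be the main obstacle: it is the only place the threshold $\beta=\tfrac1{\sqrt2}$ is used --- the variational function $f$ has its maximum at the origin exactly up to that value --- and, since the second moment by itself only controls a probability rather than $\Phi_N(\beta)$ directly, its $e^{-o(N)}$ lower bound must be fed into the concentration estimate in order to identify $\lim_N\mathbb{E}\Phi_N(\beta)$; the remaining ingredients (Gaussian concentration and the convexity-based uniformization) are routine.
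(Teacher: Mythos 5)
Your proposal is correct and is precisely the argument the paper has in mind: the paper states this lemma without proof, remarking only that it follows ``by the second moment method and concentration of measure,'' and your write-up fills in exactly those steps (first/second moment with the overlap density \eqref{eq: one inner prod density}, Paley--Zygmund fed into Borell--TIS concentration to pin down $\lim_N\mathbb{E}\Phi_N(\beta)$, and equi-Lipschitzness in $\beta$ via \eqref{eq: max} to get uniformity). All steps check out, including the identification of the threshold $\beta=\tfrac1{\sqrt2}$ as the largest $\beta$ for which $x\mapsto\beta^2x^2+\tfrac12\log(1-x^2)$ is maximized at $x=0$.
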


It will be important to consider the partition function restricted
to the intersection of the unit sphere with a hyperplane of dimension
$N-2$ (or $N-1$). The next lemma shows that (\ref{eq: high temp uniform in beta})
remains true uniformly over all such restrictions. Recall that $E^{\left\langle u,v\right\rangle ^{\bot}}$
denotes the uniform measure on the unit sphere in the subspace $\left\langle u,v\right\rangle ^{\bot}$
perpendicular to $u$ and $v$.
\begin{lem}
\label{lem: High temp in two slice}We have
\begin{equation}
\sup_{\beta\in\left[0,\frac{1}{\sqrt{2}}\right],u,v\in\mathbb{R}^{N}}\left|\frac{1}{N}\log E^{\left\langle u,v\right\rangle ^{\bot}}\left[\exp\left(\beta H_{N}\left(\sigma\right)\right)\right]-\frac{\beta^{2}}{2}\right|\to0,\quad\mathbb{P}\text{-a.s.}\label{eq: high temp in two slice}
\end{equation}
\end{lem}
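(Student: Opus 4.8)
The plan is to eliminate the supremum over $u,v$ by passing to the spectrum of the compression of $S_{N}$ to $\langle u,v\rangle^{\bot}$, and then to reduce the resulting deterministic quantity to Lemma~\ref{lem: High temp}. Fix $u,v\in\mathbb{R}^{N}$; we may assume $\langle u,v\rangle$ is two-dimensional (otherwise replace $N-2$ throughout by $N-\dim\langle u,v\rangle$, which only simplifies the estimates). Let $W$ be an $N\times(N-2)$ matrix whose columns form an orthonormal basis of $\langle u,v\rangle^{\bot}$, and let $\sqrt{N}\tilde{\lambda}_{1}\le\cdots\le\sqrt{N}\tilde{\lambda}_{N-2}$ be the eigenvalues of the compressed matrix $W^{T}S_{N}W$. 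Writing $\sigma=Wy$ for $\sigma\in\langle u,v\rangle^{\bot}$ and diagonalizing $W^{T}S_{N}W$, one has $H_{N}(\sigma)=\sqrt{N}\sigma^{T}S_{N}\sigma=N\sum_{i=1}^{N-2}\tilde{\lambda}_{i}y_{i}^{2}$ in the corresponding eigenbasis, while the uniform measure $E^{\langle u,v\rangle^{\bot}}$ becomes the uniform measure $E^{N-2}$ on the unit sphere of $\mathbb{R}^{N-2}$. Hence
\[
E^{\langle u,v\rangle^{\bot}}\left[\exp\left(\beta H_{N}\left(\sigma\right)\right)\right]=E^{N-2}\left[\exp\left(N\beta\sum_{i=1}^{N-2}\tilde{\lambda}_{i}y_{i}^{2}\right)\right],
\]
so the left-hand side depends on $(S_{N},u,v)$ only through the spectrum $(\tilde{\lambda}_{i})_{i=1}^{N-2}$.

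The next step pins this spectrum uniformly in $u,v$. By Cauchy interlacing, $\theta_{i}^{N}\le\tilde{\lambda}_{i}\le\theta_{i+2}^{N}$ for $i=1,\dots,N-2$. Since $i/N\le i/(N-2)\le(i+2)/N$ when $i\le N-2$ and $u\mapsto\theta_{u}$ is nondecreasing, the classical location $\theta_{i/(N-2)}$ lies between $\theta_{i/N}$ and $\theta_{(i+2)/N}$, so by the rigidity estimate \eqref{eq: rigidity},
\[
\sup_{u,v}\max_{1\le i\le N-2}\left|\tilde{\lambda}_{i}-\theta_{i/(N-2)}\right|\le\max_{1\le i\le N-2}\left(\theta_{(i+2)/N}-\theta_{i/N}\right)+o(1)\qquad\mathbb{P}\text{-a.s.}
\]
The first term is $o(1)$: by \eqref{eq: semi-circle law}, $\mu$ is bounded below on the bulk, giving gaps of order $1/N$ there, and vanishes like a square root at $\pm\sqrt{2}$, forcing $\theta_{(i+2)/N}-\theta_{i/N}=O(N^{-2/3})$ near the edges. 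Thus $\sup_{u,v}\max_{i}|\tilde{\lambda}_{i}-\theta_{i/(N-2)}|\to0$ $\mathbb{P}$-a.s. Since $|N\beta\sum a_{i}y_{i}^{2}-N\beta\sum b_{i}y_{i}^{2}|\le N\beta\max_{i}|a_{i}-b_{i}|$ on the unit sphere, the functional $(a_{i})_{i}\mapsto\frac{1}{N}\log E^{N-2}[\exp(N\beta\sum a_{i}y_{i}^{2})]$ is $\beta$-Lipschitz in the $\ell^{\infty}$-norm, so with the \emph{deterministic} quantity $g_{N}(\beta):=\frac{1}{N}\log E^{N-2}[\exp(N\beta\sum_{i=1}^{N-2}\theta_{i/(N-2)}y_{i}^{2})]$ we obtain
\[
\sup_{u,v}\;\sup_{\beta\in[0,1/\sqrt{2}]}\left|\frac{1}{N}\log E^{\langle u,v\rangle^{\bot}}\left[e^{\beta H_{N}}\right]-g_{N}(\beta)\right|\to0\qquad\mathbb{P}\text{-a.s.},
\]
and it remains to show $\sup_{\beta\in[0,1/\sqrt{2}]}|g_{N}(\beta)-\beta^{2}/2|\to0$.

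To evaluate $g_{N}$ I would specialize to the fixed subspace $\mathcal{U}_{0}=\{\sigma:\sigma_{N-1}=\sigma_{N}=0\}$. By the previous paragraph, $g_{N}(\beta)=\frac{1}{N}\log E^{\mathcal{U}_{0}}[e^{\beta H_{N}}]+o(1)$ uniformly in $\beta$ $\mathbb{P}$-a.s., and identifying $\mathcal{U}_{0}\cong\mathbb{R}^{N-2}$ one has $H_{N}(\sigma)=\sqrt{N/(N-2)}\,H_{N-2}(\sigma)$, where $H_{N-2}$ is built from the same Gaussians $J_{ij}$; hence $\frac{1}{N}\log E^{\mathcal{U}_{0}}[e^{\beta H_{N}}]=\frac{N-2}{N}\cdot\frac{1}{N-2}\log E^{N-2}[e^{\gamma H_{N-2}}]$ with $\gamma=\beta\sqrt{N/(N-2)}=\beta+O(1/N)$. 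By Lemma~\ref{lem: High temp} (applied to $H_{N-2}$), the second factor equals $\gamma^{2}/2+o(1)$ uniformly for $\gamma\in[0,1/\sqrt{2}]$; since in addition $\gamma\mapsto\frac{1}{N-2}\log E^{N-2}[e^{\gamma H_{N-2}}]$ is $O(1)$-Lipschitz (its derivative being a Gibbs average of $H_{N-2}/(N-2)$, which is $O(1)$ by \eqref{eq: max}), the same asymptotics persists for $\gamma$ up to $\frac{1}{\sqrt{2}}\sqrt{N/(N-2)}=\frac{1}{\sqrt{2}}+O(1/N)$. Taking $\gamma=\beta\sqrt{N/(N-2)}$ and using $\gamma^{2}=\frac{N}{N-2}\beta^{2}$ gives $g_{N}(\beta)=\frac{N-2}{N}\cdot\frac{\gamma^{2}}{2}+o(1)=\frac{\beta^{2}}{2}+o(1)$ uniformly in $\beta\in[0,1/\sqrt{2}]$. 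As $g_{N}$ is deterministic this is genuine convergence, and \eqref{eq: high temp in two slice} follows.

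The main obstacle is the uniformity over $u,v$: the subspaces $\langle u,v\rangle^{\bot}$ form a Grassmannian whose dimension grows linearly in $N$, which is far too large to cover by a net of subexponential cardinality, so pointwise concentration of the restricted free energy cannot be upgraded to a uniform bound via a union bound. The spectral reduction above bypasses this because the only remaining $(u,v)$-dependence is through the compressed eigenvalues, and these are pinned to the deterministic classical locations $\theta_{i/(N-2)}$ up to $o(1)$ \emph{regardless of $u,v$} — the underlying point being that deleting a two-dimensional subspace perturbs the spectrum negligibly (Cauchy interlacing plus rigidity). A secondary, minor wrinkle is that the inverse-temperature window reaches the critical value $1/\sqrt{2}$, so Lemma~\ref{lem: High temp} cannot be quoted verbatim at the slightly inflated temperature $\beta\sqrt{N/(N-2)}$; this is dealt with by the elementary Lipschitz-in-$\gamma$ bound above, or equivalently by the continuity of the limiting spherical free energy at $\beta_{c}=1/\sqrt{2}$.
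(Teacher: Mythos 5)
Your proof is correct and follows essentially the same route as the paper's: both reduce the $(u,v)$-dependence to the spectrum of the codimension-two compression of $S_{N}$, pin that spectrum via Cauchy interlacing plus the rigidity estimate \eqref{eq: rigidity} (you compare to the classical locations $\theta_{i/(N-2)}$, the paper to the eigenvalues of the standard-basis minor $B$ — an immaterial difference), and then invoke Lemma \ref{lem: High temp} for $H_{N-2}$. If anything you are more careful than the paper on the point that the effective inverse temperature $\beta\sqrt{N/(N-2)}$ can slightly exceed $\tfrac{1}{\sqrt{2}}$, which you handle with the Lipschitz-in-$\gamma$ bound while the paper passes over it silently.
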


\begin{proof}
Recall that $H_{N}\left(\sigma\right)=\sqrt{N}\sigma^{T}S_{N}\sigma$
where $S_{N}$ is a real symmetric matrix. For any $u,v\in\mathbb{R}^{N}$
that are linearly independent, let $w_{1},\ldots,w_{N}$ be an orthonormal
basis such that $\left\langle u,v\right\rangle =\left\langle w_{N-1},w_{N}\right\rangle $,
and let $A$ be the top left $\left(N-2\right)\times\left(N-2\right)$
minor of $S_{N}$ when written in basis $w_{1},\ldots,w_{N}$. For
$\sigma\in\left\langle u,v\right\rangle ^{\bot}$ we have $H_{N}\left(\sigma\right)=\sqrt{N}\tilde{\sigma}^{T}A\tilde{\sigma}$
where $\tilde{\sigma}=\left(\sigma_{1},\ldots,\sigma_{N-2}\right)\in\mathbb{R}^{N}$.
Let $\sqrt{N}a_{1},\ldots,\sqrt{N}a_{N-2}$ be the eigenvalues of
$A$. Then 
\begin{equation}
E^{\left\langle u,v\right\rangle ^{\bot}}\left[\exp\left(\beta H_{N}\left(\sigma\right)\right)\right]=E^{N-2}\left[\exp\left(N\beta\sum_{i=1}^{N-2}a_{i}\sigma_{i}^{2}\right)\right].\label{eq: perpendicular}
\end{equation}
Let $B$ be the top left $\left(N-2\right)\times\left(N-2\right)$
minor of $S_{N}$ when written in the standard basis and let $\sqrt{N}b_{1},\ldots,\sqrt{N}b_{N-2}$
be its eigenvalues. Note that $H_{N-2}\left(\sigma\right)=\sqrt{N-2}\sigma^{T}B\sigma$
for $\sigma\in\mathbb{R}^{N-2}$, and by (\ref{eq: high temp uniform in beta})
with $N-2$ in place of $N$ we have
\begin{equation}
E^{N-2}\left[\exp\left(\sqrt{N}\beta\sigma^{T}B\sigma\right)\right]=e^{N\left(\frac{\beta^{2}}{2}+o\left(1\right)\right)},\label{eq: N-2 FE}
\end{equation}
where the $o\left(1\right)$ term tends to zero almost surely. Also
\begin{equation}
E^{N-2}\left[\exp\left(\sqrt{N}\beta\sigma^{T}B\sigma\right)\right]=E^{N-2}\left[\exp\left(N\beta\sum_{i=1}^{N-2}b_{i}\sigma_{i}^{2}\right)\right].\label{eq: N-2}
\end{equation}

Let $\theta_{1}^{N},\ldots,\theta_{N}^{N}$ be the eigenvalues of
$S_{N}$. By the eigenvalue interlacing inequality (see e.g. Exercise
1.3.14 \cite{TaoTopicsInRMT})
\[
\theta_{i}^{N}\le a_{i},b_{i}\le\theta_{i+2}^{N}\text{ for }i=1,\ldots,N-2,
\]
so by (\ref{eq: rigidity}) we have 
\[
\sup_{i=1,\ldots,N}\left|a_{i}-b_{i}\right|\to0\text{ a.s., as }N\to\infty.
\]
Therefore
\[
\sup_{\sigma\in S_{N-2}}\left|\beta\sum_{i=1}^{N-2}a_{i}\sigma_{i}^{2}-\beta\sum_{i=1}^{N-2}b_{i}\sigma_{i}^{2}\right|\to0\text{ a.s., as }N\to\infty,
\]
so from (\ref{eq: perpendicular}), (\ref{eq: N-2 FE}) and (\ref{eq: N-2})
it follows that
\[
E^{\left\langle u,v\right\rangle ^{\bot}}\left[\exp\left(\beta H_{N}\left(\sigma\right)\right)\right]=e^{N\frac{\beta^{2}}{2}\left(1+o\left(1\right)\right)},
\]
uniformly over all linearly independent $u,v$, where the $o\left(1\right)$
terms tend to zero almost surely. The above argument but with $\left(N-1\right)\times\left(N-1\right)$
minors easily extends this to $u$ and $v$ that are linearly dependent.
This proves (\ref{eq: high temp in two slice}).
\end{proof}
We can now prove the lower bound Proposition \ref{prop: 2-spin TAP LB}.
\begin{proof}[Proof of Proposition \ref{prop: 2-spin TAP LB}]
For any $m$ and $\sigma$, recenter the spins $\sigma$ around $m$
by letting $\hat{\sigma}=\sigma-m$. Recentering the Hamiltonian (see
(\ref{eq: decomposition-1})) and the external field one obtains 
\begin{equation}
\beta H_{N}\left(\sigma\right)+N\sigma\cdot h_{N}=\beta H_{N}\left(m\right)+Nm\cdot h_{N}+Nh^{m}\cdot\hat{\sigma}+\beta H_{N}\left(\hat{\sigma}\right),\label{eq: decomp with ext field}
\end{equation}
where
\begin{equation}
h^{m}=\frac{\beta}{N}\nabla H_{N}\left(m\right)+h_{N},\label{eq: eff ext field}
\end{equation}
is the effective external field after recentering. Note that by our
assumption $\left|h_{N}\right|=h$ and (\ref{eq: max}) we have that
for $N$ large enough 
\begin{equation}
\left|h^{m}\right|\le c,\label{eq: effective ext field bound}
\end{equation}
for a constant $c$ depending only on $\beta$ and $h$.

Fix an $m\in\mathbb{R}^{N}$ with $\left|m\right|<1$. Let $v_{1},v_{2}$
be basis vectors of an arbitrary two dimensional linear subspace of
$\mathbb{R}^{N}$ that contains $m$ and $h^{m}$. For $\varepsilon>0$
to be fixed later consider 
\begin{equation}
A=\left\{ \sigma:\hat{\sigma}\cdot v_{i}\in\left(-\varepsilon,\varepsilon\right),i=1,2\right\} .\label{eq: Adef}
\end{equation}
Note that for $\sigma\in A$ 
\begin{equation}
\left|\hat{\sigma}\cdot m\right|\le c\varepsilon\text{ and }\left|\hat{\sigma}\cdot h^{m}\right|\le c\varepsilon,\label{eq: inner prod}
\end{equation}
(the latter constant depends on the one in (\ref{eq: effective ext field bound}))
and
\begin{equation}
\left|\hat{\sigma}\right|^{2}=\left|\sigma\right|^{2}-\left|m\right|^{2}-2\hat{\sigma}\cdot m=1-\left|m\right|^{2}+O\left(\varepsilon\right).\label{eq: recentered magnitude}
\end{equation}

Certainly we have
\[
Z_{N}\left(\beta,h_{N}\right)\ge E\left[1_{A}\exp\left(\beta H_{N}\left(\sigma\right)+N\sigma\cdot h_{N}\right)\right].
\]
Rewriting in terms of $\hat{\sigma}$ and using (\ref{eq: decomp with ext field})
and the second inequality of (\ref{eq: inner prod}) the right hand-side
can be bounded below by
\begin{equation}
\exp\left(\beta H_{N}\left(m\right)+Nm\cdot h_{N}-c\varepsilon N\right)E\left[1_{A}\exp\left(\beta H_{N}\left(\hat{\sigma}\right)\right)\right].\label{eq: thing}
\end{equation}
Let $\gamma\sigma^{\bot}$ be the projection of $\hat{\sigma}$ onto
the hyperplane $\left\langle v_{1},v_{2}\right\rangle ^{\bot}$, where
$\sigma^{\bot}$ is a unit vector and $\gamma\in\mathbb{R}$ is the
magnitude of the projection. From (\ref{eq: Adef}) we have for $\sigma\in A$
\begin{equation}
\left|\hat{\sigma}-\gamma\sigma^{\bot}\right|\le c\varepsilon,\label{eq: dist}
\end{equation}
so that by (\ref{eq: recentered magnitude})
\begin{equation}
\gamma^{2}=1-\left|m\right|^{2}+O\left(\varepsilon\right).\label{eq: gamma2 magnitude}
\end{equation}
Using (\ref{eq: dist}) and (\ref{eq: max})-(\ref{eq: Lipschitz})
we have
\[
H_{N}\left(\hat{\sigma}\right)=H_{N}\left(\gamma\sigma^{\bot}\right)+O\left(\varepsilon N\right),
\]
and by (\ref{eq: scaling}), (\ref{eq: max}) and (\ref{eq: gamma2 magnitude})
\[
H_{N}\left(\gamma\sigma^{\bot}\right)=\gamma^{2}H_{N}\left(\sigma^{\bot}\right)=\left(1-\left|m\right|^{2}\right)H_{N}\left(\sigma^{\bot}\right)+O\left(\varepsilon N\right).
\]
This gives that (\ref{eq: thing}) is at least
\begin{equation}
\exp\left(\beta H_{N}\left(m\right)+Nm\cdot h_{N}-c\varepsilon N\right)E\left[1_{A}\exp\left(\beta\left(1-\left|m\right|^{2}\right)H_{N}\left(\sigma^{\bot}\right)\right)\right].\label{eq: intermediate}
\end{equation}
Now $\sigma^{\bot}$ is independent of $\sigma\cdot m,\sigma\cdot h^{m}$
under $E$, and is uniform on the unit sphere intersected with $\left\langle v_{1},v_{2}\right\rangle ^{\bot}$.
Therefore (\ref{eq: intermediate}) in fact equals
\[
\exp\left(\beta H_{N}\left(m\right)+Nm\cdot h_{N}-c\varepsilon N\right)E\left[A\right]E^{\left\langle v_{1},v_{2}\right\rangle ^{\bot}}\left[\exp\left(\beta\left(1-\left|m\right|^{2}\right)H_{N}\left(\sigma\right)\right)\right].
\]
Using (\ref{eq: high dim density}) with $M=2$ and (\ref{eq: recentered magnitude})
and it holds that
\[
E\left[A\right]\ge Nc\varepsilon^{2}\left(1-\left|m\right|^{2}-c\varepsilon\right)^{\frac{N-4}{2}},
\]
and setting e.g. $\varepsilon=\frac{1}{\sqrt{N}}$ this equals
\[
\exp\left(\frac{N}{2}\log\left(1-\left|m\right|^{2}\right)+o\left(N\right)\right).
\]
Thus $Z_{N}$ is at least
\[
\begin{array}{l}
\exp\left(\beta H_{N}\left(m\right)+Nm\cdot h_{N}+\frac{N}{2}\log\left(1-\left|m\right|^{2}\right)+o\left(N\right)\right)\\
\times E^{\left\langle v_{1},v_{2}\right\rangle ^{\bot}}\left[\exp\left(\beta\left(1-\left|m\right|^{2}\right)H_{N}\left(\sigma\right)\right)\right],
\end{array}
\]
for any $m$ with $\left|m\right|<1$, where the error term is $o\left(N\right)$
uniformly in $m$, almost surely.

By Lemma \ref{lem: High temp in two slice} this is in turn at least
\begin{equation}
\exp\left(\beta H_{N}\left(m\right)+Nm\cdot h_{N}+\frac{N}{2}\log\left(1-\left|m\right|^{2}\right)+N\frac{\beta^{2}}{2}\left(1-\left|m\right|^{2}\right)^{2}+o\left(N\right)\right),\label{eq: final}
\end{equation}
provided 
\begin{equation}
\beta\left(1-\left|m\right|^{2}\right)\le\frac{1}{\sqrt{2}},\text{ i.e. }\beta\left(m\right)\le\frac{1}{\sqrt{2}},\label{eq: plefka in proof}
\end{equation}
where the error term is $o\left(N\right)$ almost surely, uniformly
in $m$ that satisfy (\ref{eq: plefka in proof}). Since (\ref{eq: final})
equals $\exp\left(H_{TAP}\left(m\right)+o\left(N\right)\right)$ the
claim (\ref{eq: 2-spin TAP LB}) follows.
\end{proof}

\section{\label{sec: UB}Upper bound}

In this section we prove the following upper bound on the free energy.
\begin{prop}
\label{prop: 2-spin TAP UB}For $\beta,h,h_{1},h_{2},\ldots$ as in
Theorem \ref{thm: main intro II} one has 
\begin{equation}
F_{N}\left(\beta,h_{N}\right)\le\frac{1}{N}\sup_{m\in\mathbb{R}^{N}:\left|m\right|<1,\beta\left(m\right)\le\frac{1}{\sqrt{2}}}H_{TAP}\left(m\right)+o\left(1\right),\label{eq: 2-spin TAP UB}
\end{equation}
where the $o\left(1\right)$ term tends to zero $\mathbb{P}$-a.s.
\end{prop}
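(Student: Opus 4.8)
The plan is to localize $Z_N(\beta,h_N)$ to a low-dimensional family of magnetizations, reduce the resulting integral by Laplace's method to a supremum of a \emph{modified} TAP energy over that family, and finally read off Plefka's condition from the second-order optimality of the maximizer. First I would construct, using the spectral rigidity (\ref{eq: rigidity}) of $S_N$, a subspace $\mathcal{M}_N\subset\mathbb{R}^N$ of dimension $o(N)$ spanned by $h_N$, by the eigenvectors of $S_N$ whose eigenvalues lie within a shrinking window of the spectral edges $\pm\sqrt2\sqrt N$, and by a controlled number of iterates $S_Nh_N,S_N^2h_N,\ldots$\,. The eigenvector part makes $\mathcal{M}_N$ (essentially) $S_N$-invariant; since by (\ref{eq: eff ext field}) and (\ref{eq: gradient linear}) recentering the Hamiltonian and the external field around $\tilde m\in\mathcal{M}_N$ involves only $S_N\tilde m$ and $h_N$, this should guarantee that the effective external field $h^{\tilde m}$, and hence the cross term $\nabla H_N(\tilde m)\cdot\sigma'$ below for $\sigma'\in\mathcal{M}_N^{\bot}$, is contained in $\mathcal{M}_N$ up to a negligible remainder. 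Arranging $\mathcal{M}_N$ to be genuinely low-dimensional while keeping this recentering error negligible \emph{uniformly in} $\tilde m\in\mathcal{M}_N$, rather than around one fixed point, is the first serious obstacle.

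Next I would split $\sigma=\tilde m+\sigma'$ with $\tilde m=P_{\mathcal{M}_N}\sigma$, $\sigma'=P_{\mathcal{M}_N^{\bot}}\sigma$, so that $|\sigma'|^2=1-|\tilde m|^2$ and, by (\ref{eq: high dim density}) with $M=\dim\mathcal{M}_N=o(N)$, the $E$-law of $(\tilde m,\sigma')$ equals $e^{\frac N2\log(1-|\tilde m|^2)+o(N)}\,d\tilde m$ times the uniform law on the sphere of radius $|\sigma'|$ in $\mathcal{M}_N^{\bot}$. Using the exact decomposition (\ref{eq: decomposition-1}), the negligibility of the cross term from Step 1, and $h_N\cdot\sigma'=0$ (since $h_N\in\mathcal{M}_N$), integrating out $\sigma'$ first and rescaling via (\ref{eq: scaling}) to the unit sphere of $\mathcal{M}_N^{\bot}$ gives
\[
Z_N(\beta,h_N)\le e^{o(N)}\int_{\mathcal{M}_N,\,|\tilde m|<1}e^{\widehat H_N(\tilde m)}\,d\tilde m ,
\]
where $\widehat H_N(\tilde m)=\beta H_N(\tilde m)+N\tilde m\cdot h_N+\tfrac N2\log(1-|\tilde m|^2)+N\psi_N(\beta(\tilde m))$ is a modified TAP energy in which the Onsager term is replaced by $\psi_N(b):=\frac1N\log E^{\mathcal{M}_N^{\bot}}[e^{bH_N(\sigma)}]$, the (not entirely explicit) free energy of a $2$-spin model carried by the bulk of the spectrum of $S_N$; by the codimension-$o(N)$ analogue of Lemma~\ref{lem: High temp in two slice} together with the random-matrix computation of the $2$-spin spherical free energy, $\psi_N(b)\to\tfrac12b^2$ for $b\le\tfrac1{\sqrt2}$ and $\psi_N(b)\to$ the strictly smaller low-temperature value for $b>\tfrac1{\sqrt2}$. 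Since $\dim\mathcal{M}_N=o(N)$, a net argument based on the Lipschitz bounds (\ref{eq: max})--(\ref{eq: Lipschitz}) turns the displayed inequality into
\[
F_N(\beta,h_N)\le\frac1N\sup_{\tilde m\in\mathcal{M}_N,\,|\tilde m|<1}\widehat H_N(\tilde m)+o(1),
\]
with the supremum attained (the term $\tfrac N2\log(1-|\tilde m|^2)$ keeps it away from $|\tilde m|=1$).

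Finally I would analyze a maximizer $\tilde m^{\ast}\in\mathcal{M}_N$: at $\tilde m^{\ast}$ the gradient of $\widehat H_N$ vanishes and its Hessian is negative semidefinite. Testing negative semidefiniteness against carefully chosen directions — in particular the extreme eigenvectors of $S_N$, which lie in $\mathcal{M}_N$ with eigenvalue $\pm\sqrt2\sqrt N(1-o(1))$, combined with $\tilde m^{\ast}$ itself and the first-order relation, and using the explicit shape of $\psi_N$ and $\psi_N'$ — should force $\beta(\tilde m^{\ast})\le\tfrac1{\sqrt2}+o(1)$, i.e.\ (essentially) Plefka's condition. Once $\beta(\tilde m^{\ast})\le\tfrac1{\sqrt2}$ one has $\psi_N(\beta(\tilde m^{\ast}))=\tfrac12\beta(\tilde m^{\ast})^2+o(1)$, hence $\widehat H_N(\tilde m^{\ast})=H_{TAP}(\tilde m^{\ast})+o(N)$, so
\[
F_N(\beta,h_N)\le\frac1N H_{TAP}(\tilde m^{\ast})+o(1)\le\frac1N\sup_{m:\,|m|<1,\,\beta(m)\le1/\sqrt2}H_{TAP}(m)+o(1);
\]
letting the edge window shrink removes the residual slack, and checking that every error term is $\mathbb{P}$-a.s.\ of the stated size yields (\ref{eq: 2-spin TAP UB}). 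I expect this last step — turning negative semidefiniteness of the Hessian into Plefka's condition and into the near-equality of $\widehat H_N$ with $H_{TAP}$ — to be, alongside the construction of $\mathcal{M}_N$, the hardest part of the argument.
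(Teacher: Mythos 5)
Your plan follows the paper's proof essentially step for step: the same approximately invariant low-dimensional subspace $\mathcal{M}_N$ (iterates of $h_N$ under $S_N$ together with near-edge eigenvectors), the same integration over $\mathcal{M}_N^{\bot}$ producing a modified TAP energy with an implicit Onsager term, the same Laplace reduction, and the same second-order-optimality argument at the maximizer yielding Plefka's condition. The one device you do not anticipate is that the paper replaces your implicit $\psi_N$ by an explicit coarse-grained free energy $\mathcal{F}_K$ of the bulk spectrum, solved by Lagrange multipliers with $\mathcal{F}_K'(\beta)=\lambda_K(\beta)-\tfrac{1}{2\beta}$, which is precisely what makes the Hessian computation differentiable and the passage to Plefka's condition rigorous --- the step you correctly flag as the hardest.
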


As for the lower bound, our proof is based on considering the Hamiltonian
recentered around certain $m$-s inside the unit ball. However, for
an upper bound we are not free to simply restrict the integral in
the partition function to slices around an $m$ and ignore the complement.
Neither can we further restrict the integral inside the slice to a
space where the effective external field vanishes. Lastly we can not
ignore slices for which Plefka's condition is not satisfied.

We get around these issues by constructing a low-dimensional subspace
$\mathcal{M}_{N}$ of $m$-s, such that the recentered Hamiltonian
restricted to the space of configurations perpendicular to $\mathcal{M}_{N}$
has almost vanishing external field for \emph{any} $m\in\mathcal{M}_{N}$,
without further restriction. Because the dimension of $\mathcal{M}_{N}$
is $o\left(N\right)$ we are able to use the Laplace method to upper
bound the free energy by a $\sup$ of the free energy contribution
of each of these restricted Hamiltonians. Lastly a coarse-graining
of the recentered Hamiltonian gives a sequence of approximations to
the free energy of the restricted Hamiltonians in a form that allows
to show that the supremum must be attained at an $m$ that satisfies
Plefka's condition. 

\subsection{\label{sec: diag}Diagonalization}

To prove the upper bound Proposition \ref{prop: 2-spin TAP UB} we
are obliged to make stronger use the diagonalized Hamiltonian 
\begin{equation}
N\sum_{i=1}^{N}\theta_{i}^{N}\sigma_{i}^{2},\label{eq: diagonalized Hamiltonian}
\end{equation}
and the semi-circle law. Let 
\begin{equation}
\tilde{h}_{N}\text{ be the vector }h_{N}\text{ written in the diagonalizing basis of the matrix }S_{N}.\label{eq: h tilde}
\end{equation}
By rotational symmetry we have
\[
F_{N}\left(\beta,h_{N}\right)=\frac{1}{N}\log E\left[\exp\left(N\beta\sum_{i=1}^{N}\theta_{i}^{N}\sigma_{i}^{2}+N\tilde{h}_{N}\cdot\sigma\right)\right].
\]
For convenience we also replace the diagonalized Hamiltonian (\ref{eq: diagonalized Hamiltonian})
by its deterministic counterpart
\[
\tilde{H}_{N}\left(\sigma\right)=N\sum_{i=1}^{N}\theta_{i/N}\sigma_{i}^{2},
\]
where each random eigenvalue $\theta_{i}^{N}$ is replaced by its
deterministic typical position $\theta_{i/N}$ (recall (\ref{eq: classical location})).
The error made is controlled by (\ref{eq: rigidity}), giving
\begin{equation}
\lim_{N\to\infty}\frac{1}{N}\sup_{\sigma:\left|\sigma\right|=1}\left|N\sum_{i=1}^{N}\theta_{i}^{N}\sigma_{i}^{2}-\tilde{H}_{N}\left(\sigma\right)\right|=0,\quad\mathbb{P}-a.s.\label{eq: error made}
\end{equation}
Let
\[
\tilde{F}_{N}\left(\beta,h_{N}\right)=\frac{1}{N}\log E\left[\exp\left(N\beta\sum_{i=1}^{N}\theta_{i/N}\sigma_{i}^{2}+N\tilde{h}_{N}\cdot\sigma\right)\right].
\]
 and let
\begin{equation}
\tilde{H}_{TAP}\left(m\right)=\beta\tilde{H}_{N}\left(m\right)+Nm\cdot\tilde{h}_{N}+\frac{N}{2}\log\left(1-\left|m\right|^{2}\right)+N\frac{\beta^{2}}{2}\left(1-\left|m\right|^{2}\right)^{2}.\label{eq: HTAP tilde}
\end{equation}
By (\ref{eq: error made}) the upper bound Proposition \ref{prop: 2-spin TAP UB}
follows from the following deterministic bound.
\begin{prop}
\label{prop: UB diag deterministic}For $\beta,h,h_{1},h_{2},\ldots$
as in Theorem \ref{thm: main intro II} one has
\begin{equation}
\tilde{F}_{N}\left(\beta,h_{N}\right)\le\frac{1}{N}\sup_{m\in\mathbb{R}^{N}:\left|m\right|<1,\beta\left(m\right)\le\frac{1}{\sqrt{2}},}\tilde{H}_{TAP}\left(m\right)+o\left(1\right).\label{eq: suff to show diag deterministic}
\end{equation}
\end{prop}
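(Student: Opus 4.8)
The plan is to prove the deterministic upper bound \eqref{eq: suff to show diag deterministic} by a Laplace-method argument over a cleverly chosen low-dimensional subspace of magnetizations. First I would fix a small parameter and partition the eigenvalue interval $[-\sqrt 2,\sqrt 2]$ into $K = K(N)$ blocks where $K \to \infty$ but $K = o(N)$; let $\mathcal{M}_N$ be the span of the coordinate blocks together with the direction of $\tilde h_N$ (which adds only a constant to the dimension), so that $\dim \mathcal{M}_N = o(N)$. The crucial feature of this choice is that because $\tilde H_N$ is \emph{diagonal}, its gradient $2N\,\mathrm{diag}(\theta_{i/N})\sigma$ approximately preserves $\mathcal{M}_N$: if $m \in \mathcal{M}_N$, then $\nabla \tilde H_N(m)$ lies within $O(\tfrac1K)\cdot N|m|$ of $\mathcal{M}_N$, since $\theta_{i/N}$ is nearly constant on each block. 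Hence the effective external field $\tilde h^m = \frac{\beta}{N}\nabla\tilde H_N(m) + \tilde h_N$ is, up to an error $o(1)\cdot N$ in norm, contained in $\mathcal{M}_N$.

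Next I would write $\sigma = \tau + \rho$ with $\tau$ the projection onto $\mathcal{M}_N$ and $\rho \in \mathcal{M}_N^\perp$, and use Fubini: integrate first over $\rho$ on the sphere slice of radius $\sqrt{1-|\tau|^2}$ inside $\mathcal{M}_N^\perp$, then over $\tau$. Recentering the Hamiltonian around $m := \tau$ via \eqref{eq: decomposition-1} (which is exact for the 2-spin Hamiltonian), the inner integral over $\rho$ becomes, up to the factor $\exp(\beta\tilde H_N(\tau) + N\tau\cdot\tilde h_N)$ coming from the constant and linear terms, a partition function of the form $E^{\mathcal{M}_N^\perp}[\exp(\beta \tilde H_N(\rho) + N(\text{small field})\cdot\rho)]$ on a sphere of radius $\sqrt{1-|\tau|^2}$. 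Since the residual field is $o(N)$ in norm it contributes only $o(N)$ to the log. By the scaling \eqref{eq: scaling} this is a 2-spin-type partition function at effective inverse temperature $\beta(1-|\tau|^2)$; I would estimate its logarithm (restricted to $\mathcal{M}_N^\perp$, which has codimension $o(N)$, so the density \eqref{eq: high dim density} gives the entropy term $\frac N2\log(1-|\tau|^2) + o(N)$) by some function $g_N(|\tau|^2)$, a modified Onsager term. For $\beta(1-|\tau|^2)\le \frac1{\sqrt2}$, Lemma \ref{lem: High temp in two slice}-type reasoning (extended from codimension $2$ to codimension $o(N)$ via eigenvalue interlacing and rigidity) shows $g_N(|\tau|^2) = \frac{\beta^2}{2}(1-|\tau|^2)^2 + o(1)$; in general $g_N$ is only implicitly given but is continuous and $N$-independent up to $o(1)$.

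Then, because $\mathcal{M}_N$ has dimension $o(N)$, covering its unit ball by $e^{o(N)}$ balls of radius $o(1)$ and applying the Laplace method, $\tilde F_N(\beta,h_N) \le \frac1N\sup_{\tau \in \mathcal{M}_N, |\tau|<1}\bigl[\beta\tilde H_N(\tau)+N\tau\cdot\tilde h_N + \frac N2\log(1-|\tau|^2) + N g_N(|\tau|^2)\bigr] + o(1)$. Since $\mathcal M_N$ is chosen to contain $\tilde h_N$ and the eigenvalue-block structure, this $\sup$ over $\tau\in\mathcal M_N$ is, up to $o(1)$, the same as the $\sup$ over all $m\in\mathbb R^N$ with $|m|<1$ of the \emph{modified} TAP energy $\tilde H_{TAP}^{\mathrm{mod}}(m) := \beta\tilde H_N(m)+Nm\cdot\tilde h_N+\frac N2\log(1-|m|^2)+Ng_N(|m|^2)$. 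Finally I would argue that the supremum of $\tilde H_{TAP}^{\mathrm{mod}}$ is attained (essentially) at an $m$ satisfying Plefka's condition and that there $g_N(|m|^2)$ agrees with $\frac{\beta^2}{2}(1-|m|^2)^2$ up to $o(1)$: at a near-maximizer the Hessian in the radial direction must be negative semi-definite, and a computation of $\frac{d}{dq}$ and $\frac{d^2}{dq^2}$ of the profile (using convexity/monotonicity properties of the implicit $g_N$) forces $\beta(1-|m|^2)\le\frac1{\sqrt2}$, in which regime the explicit Onsager form holds. This yields \eqref{eq: suff to show diag deterministic}.

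The main obstacle is the last step: controlling the implicitly-defined modified Onsager term $g_N$ well enough — its monotonicity, concavity, and the location of the maximizer of the resulting one-dimensional profile — to deduce that Plefka's condition must hold at the optimum and that there $g_N$ coincides with the explicit Onsager correction. Handling the non-explicit $g_N$ requires, in place of a clean formula, a careful analysis of the low-temperature behaviour of a 2-spin partition function via the semicircle law (the eigenvalue near the edge contributing the BBP-type transition at $\beta_c = \frac1{\sqrt 2}$), together with a Hessian computation on $\mathcal M_N$ that must be controlled uniformly. A secondary technical point is making the "effective field almost lies in $\mathcal M_N$" estimate quantitative enough that the residual-field contribution to the inner integral is genuinely $o(N)$ uniformly over $\tau \in \mathcal M_N$.
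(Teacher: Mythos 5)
Your architecture matches the paper's: recenter around the projection onto a low-dimensional subspace $\mathcal{M}_{N}$ that (approximately) absorbs the effective field, integrate out the perpendicular directions to produce a modified Onsager term, apply the Laplace method over $\mathcal{M}_{N}$, and then argue that the optimum must satisfy Plefka's condition, where the modified term reduces to the explicit one. Steps one through four are essentially sound, modulo one imprecision: the ``span of the coordinate blocks'' is all of $\mathbb{R}^{N}$, so you presumably mean the span of the blockwise projections $\Pi^{I_{k}}\tilde{h}_{N}$ of $\tilde{h}_{N}$ (adding only the direction of $\tilde h_N$ to the block-indicator vectors is not enough, since $\mathrm{diag}(\theta_{i/N})\tilde{h}_{N}$ need not be near that span); with that reading your $\mathcal{M}_{N}$ is a legitimate, arguably simpler alternative to the paper's Krylov-type construction $\langle(\tfrac1N\nabla\tilde H_N)^k\bar h_N\rangle$ plus edge eigenvectors, at the price of needing $K=K(N)\to\infty$ rather than the paper's fixed $K$.

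The genuine gap is exactly the step you flag as ``the main obstacle,'' and the mechanism you propose for it would not work. First, leaving the modified Onsager term as an implicit $g_{N}$ forfeits the information the argument needs; the paper's coarse-graining is not a convenience but the point: replacing the semicircle by $K$ atoms makes the zero-field free energy $\mathcal{F}_{K}$ explicit, free of any phase transition, and differentiable with $\mathcal{F}_{K}'(\beta)=\lambda_{K}(\beta)-\tfrac{1}{2\beta}$ (Lemma \ref{lem: FE K deriv}), which is what ties the optimality condition to the resolvent parameter $\lambda_{K}$ and hence, via Lemma \ref{lem: lambda large is high temp}, to Plefka's condition. Second, the decisive second-order condition is \emph{not} the radial one. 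At a critical point of $\tilde{H}_{TAP}^{K}$ over all of $\mathbb{R}^{N}$ the Hessian is $2N\beta$ times $\bigl(\tfrac{1}{2N}\nabla^{2}\tilde{H}_{N}(m)-\lambda_{K}I\bigr)$ plus a rank-one matrix proportional to $mm^{T}$; negative semi-definiteness therefore forces the \emph{second largest} eigenvalue of the diagonal part to be nonpositive, i.e.\ $\lambda_{K}(\beta(1-|m|^{2}))\ge\theta_{1-1/N}=\sqrt{2}+o(1)$. It is the transverse directions associated with the near-edge eigenvalues $\theta_{i/N}$ that force Plefka's condition; the second derivative of the one-dimensional radial profile $q\mapsto\mathcal{B}$-type function gives a condition involving $\lambda_{K}'$ that does not by itself rule out $\beta(1-|m|^{2})>\tfrac{1}{\sqrt{2}}$. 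Without the explicit $\mathcal{F}_{K}$, the rank-one perturbation argument, and the uniform convergence $\mathcal{F}_{K}\to\beta^{2}(\cdot)^{2}/2$ on $[0,\tfrac{1}{\sqrt{2}}]$ (Lemma \ref{lem: F K uniform conv}), the upper bound does not close.
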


The rest of this section is devoted to the proof of Proposition \ref{prop: UB diag deterministic}.

\subsection{\label{sec: high temp no ext field UB}Free energy of coarse-grained
Hamiltonian without external field}

We will approximate $\tilde{H}_{N}\left(\sigma\right)$ by a coarse-grained
Hamiltonian where the $\theta_{i/N}$ are replaced by a bounded number
of distinct coefficients. For such a Hamiltonian it will be straight-forward
to bound the free energy using the Laplace method. To this end consider
for each $K\ge2$ equally spaced numbers $x_{1},\ldots,x_{K}$ in
$\left[-\sqrt{2},\sqrt{2}\right]$, so that, 
\[
-\sqrt{2}=x_{1}<x_{2}<\ldots<x_{K}=\sqrt{2}-\frac{2\sqrt{2}}{K}\text{ and }x_{k+1}-x_{k}=\frac{2\sqrt{2}}{K},
\]
and a partition $I_{1},\ldots,I_{K}$ of $\left\{ 1,\ldots,N\right\} $
given by
\begin{equation}
I_{k}=\left\{ i:x_{k}\le\theta_{i/N}<x_{k+1}\right\} ,k=1,\ldots,K-1\text{ and }I_{K}=\left\{ i:x_{K}\le\theta_{i/N}\right\} .\label{eq: Ik def}
\end{equation}
Let
\begin{equation}
\sigma_{\left[k\right]}^{2}=\sum_{i\in I_{k}}\sigma_{i}^{2}\text{ and }\mu_{k}=\frac{\left|I_{k}\right|}{N}.\label{eq: muk def}
\end{equation}

The next lemma gives the density of the vector $\left(\sigma_{\left[1\right]}^{2},\ldots\sigma_{\left[K-1\right]}^{2}\right)$
under $E$.
\begin{lem}
\label{lem: density}The $E$-distribution of the vector $\left(\sigma_{\left[1\right]}^{2},\ldots\sigma_{\left[K-1\right]}^{2}\right)$
has a density on $\mathbb{R}^{K-1}$ with respect to Lebesgue measure
given by
\begin{equation}
\Gamma\left(\frac{N}{2}\right)\prod_{k=1}^{K}\frac{\rho_{k}^{\frac{\left|I_{k}\right|-2}{2}}}{\Gamma\left(\frac{\left|I_{k}\right|}{2}\right)}1_{A}d\rho_{1}\ldots d\rho_{K-1},\label{eq: density}
\end{equation}
where we write $\rho_{K}=1-\rho_{1}-\ldots-\rho_{K-1}$ and $A=\left\{ \rho_{1},\ldots,\rho_{K-1}\ge0,\rho_{1}+\ldots+\rho_{K-1}\le1\right\} $.
\end{lem}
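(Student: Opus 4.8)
The plan is to derive the density by a change of variables from the standard Gaussian representation of the uniform measure on the sphere. Recall that if $g = (g_1,\ldots,g_N)$ is a vector of iid standard Gaussians, then $\sigma = g/|g|$ is distributed according to $E$, and $\sigma$ is independent of $|g|$. Consequently the squared coordinates $(\sigma_i^2)_{i}$ have the same law as $(g_i^2/\sum_j g_j^2)_i$. Grouping coordinates according to the blocks $I_1,\ldots,I_K$, set $G_k = \sum_{i\in I_k} g_i^2$. Since the $g_i$ are independent, the $G_k$ are independent and $G_k \sim \chi^2_{|I_k|}$, i.e.\ $G_k$ has a Gamma density $\frac{1}{2^{|I_k|/2}\Gamma(|I_k|/2)} x^{|I_k|/2 - 1} e^{-x/2}$ on $(0,\infty)$. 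By construction $\sigma_{[k]}^2 = G_k / (G_1 + \ldots + G_K)$, so $(\sigma_{[1]}^2,\ldots,\sigma_{[K]}^2)$ is the normalization of a vector of independent Gammas with shape parameters $|I_k|/2$ and common scale.

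The next step is the standard computation that the normalization of independent Gamma variables with common scale follows a Dirichlet distribution. Concretely, I would apply the change of variables $(G_1,\ldots,G_K) \mapsto (\rho_1,\ldots,\rho_{K-1}, S)$ where $\rho_k = G_k/S$ for $k=1,\ldots,K-1$ and $S = G_1+\ldots+G_K$; the inverse map is $G_k = \rho_k S$ for $k < K$ and $G_K = (1-\rho_1-\ldots-\rho_{K-1}) S = \rho_K S$, with Jacobian determinant $S^{K-1}$. Substituting into the product of Gamma densities, the exponential factors combine to $e^{-S/2}$, the powers of $S$ collect to $S^{\sum_k |I_k|/2 - 1} = S^{N/2 - 1}$, and integrating out $S$ over $(0,\infty)$ produces the factor $2^{N/2}\Gamma(N/2)$ (using $\sum_k |I_k| = N$). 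The $2^{|I_k|/2}$ and $2^{N/2}$ factors cancel, leaving exactly
\[
\Gamma\left(\tfrac{N}{2}\right)\prod_{k=1}^{K}\frac{\rho_k^{(|I_k|-2)/2}}{\Gamma(|I_k|/2)}\, 1_A\, d\rho_1\ldots d\rho_{K-1},
\]
which is the claimed density \eqref{eq: density}.

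There is essentially no serious obstacle here; the only points requiring a little care are bookkeeping ones. One should check that the $I_k$ are nonempty (or rather, handle the degenerate case $|I_k| \le 1$: if $|I_k| = 1$ the corresponding $\rho_k$ still has a legitimate, though singular at $0$, density $\rho_k^{-1/2}$, and if $|I_k| = 0$ the block simply drops out and $\rho_k \equiv 0$ — in either case the formula remains formally correct, or one restricts attention to the blocks that actually occur). One should also note that the representation $\sigma = g/|g|$ genuinely gives independence of $\sigma$ from $|g|$, which is the rotational-invariance fact underlying the whole computation; this is classical. Finally, one verifies that the support indicator $1_A$ is correct: $\rho_k \ge 0$ since each $G_k \ge 0$, and $\rho_1 + \ldots + \rho_{K-1} \le 1$ since $G_K \ge 0$. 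With these remarks the proof is complete.
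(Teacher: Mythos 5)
Your proof is correct and is essentially identical to the paper's: both represent $E$ by normalizing a standard Gaussian, identify the block sums as independent $\chi^2_{|I_k|}$ variables, perform the same change of variables $(x_k)\mapsto(\rho_k,S)$ with Jacobian $S^{K-1}$, and integrate out the radial variable to produce the $\Gamma(N/2)$ factor. Your extra remarks about degenerate blocks and the independence of $\sigma$ from $|g|$ are fine but not needed beyond what the paper already does.
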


\begin{proof}
One can sample the random variable $\sigma$ with law $E$ by sampling
from the standard Gaussian distribution on $\mathbb{R}^{N}$ and normalizing
the result. Therefore $\left(\sigma_{\left[1\right]}^{2},\ldots\sigma_{\left[K-1\right]}^{2}\right)$
has the same law as $\left(R_{1},\ldots,R_{K-1}\right),$ where
\[
R_{i}=\frac{X_{i}}{X_{1}+\ldots+X_{K}},i\le K,
\]
the $X_{i}$ are independent, and $X_{i}$ has the $\chi^{2}$-distribution
with $\left|I_{k}\right|$ degrees of freedom, i.e. has density $\frac{1}{2^{\left|I_{k}\right|/2}\Gamma\left(\left|I_{k}\right|/2\right)}x_{i}^{\frac{\left|I_{k}\right|-2}{2}}e^{-\frac{x_{k}}{2}}1_{\{x_{k}\ge0\}}dx_{k}$.
We now let $Z=X_{1}+\ldots+X_{K}$ and make the change of variables
$x_{i}=z\rho_{i},i=1,\ldots,K-1$ which has Jacobian $z^{k-1}$ to
obtain that $\left(R_{1},\ldots,R_{K-1},Z\right)$ has density
\[
\begin{array}{l}
1_{A}1_{\left\{ z\ge0\right\} }\left(\prod_{k=1}^{K}\frac{1}{2^{\left|I_{k}\right|/2}\Gamma\left(\frac{\left|I_{k}\right|}{2}\right)}\left(z\rho_{i}\right)^{\frac{\left|I_{k}\right|-2}{2}}e^{-\frac{z\rho_{k}}{2}}\right)z^{K-1}d\rho_{1}\ldots d\rho_{k-1}dz.\\
=\left(1_{A}\prod_{k=1}^{K}\frac{\rho_{i}^{\frac{\left|I_{k}\right|-2}{2}}}{\Gamma\left(\frac{\left|I_{k}\right|}{2}\right)}\right)\left(\frac{1}{2^{N/2}}1_{\left\{ z\ge0\right\} }z^{\frac{N-2}{2}}e^{-\frac{z}{2}}dz\right)d\rho_{1}\ldots d\rho_{k-1}.
\end{array}
\]
Since
\[
\int\frac{1}{2^{N/2}}z^{\frac{N-2}{2}}e^{-\frac{z}{2}}dz=\Gamma\left(\frac{N}{2}\right),
\]
integrating out $z$ to get the marginal of $\left(R_{1},\ldots,R_{K-1}\right)$
one obtains (\ref{eq: density}).
\end{proof}
We first show the following variational principle for the free energy
of the coarse-grained Hamiltonians in the absence of an external field.
\begin{lem}
\textup{\label{lem: finite number of weights FE}}For all $C>0$ we
have uniformly in $0<\beta\le C$\textup{ , large enough $K$ and
$N\ge c\left(K\right)$ that}
\begin{equation}
\begin{array}{l}
\frac{1}{N}\log E\left[\exp\left(N\beta\sum_{k=1}^{K}x_{k}\sigma_{\left[k\right]}^{2}\right)\right]\\
={\displaystyle \sup_{0\le f_{k},f_{1}+\ldots+f_{K}=1}}\left\{ \beta\sum_{k=1}^{K}x_{k}f_{k}+\frac{1}{2}\sum\mu_{k}\log\frac{f_{k}}{\mu_{k}}\right\} +O\left(\frac{K^{3}\log N}{N}\right).
\end{array}\label{eq: finite number of weights FE}
\end{equation}
\end{lem}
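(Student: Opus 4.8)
The plan is to compute the left-hand side exactly using the density from Lemma~\ref{lem: density} and then extract the variational formula by the Laplace method, carefully tracking the dependence on $K$. First I would write
\[
E\left[\exp\left(N\beta\sum_{k=1}^{K}x_{k}\sigma_{\left[k\right]}^{2}\right)\right]=\Gamma\left(\tfrac{N}{2}\right)\int_{A}\exp\left(N\beta\sum_{k=1}^{K}x_{k}\rho_{k}\right)\prod_{k=1}^{K}\frac{\rho_{k}^{\frac{\left|I_{k}\right|-2}{2}}}{\Gamma\left(\frac{\left|I_{k}\right|}{2}\right)}\,d\rho_{1}\ldots d\rho_{K-1},
\]
with $\rho_{K}=1-\rho_{1}-\ldots-\rho_{K-1}$. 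Taking $\frac{1}{N}\log$, the prefactors contribute $\frac{1}{N}\log\Gamma(\frac N2)-\frac1N\sum_k\log\Gamma(\frac{|I_k|}{2})$, which by Stirling equals $\frac12\log\frac N2-\frac12-\frac12\sum_k\mu_k\left(\log\frac{|I_k|}{2}-1\right)+O\!\left(\frac{K\log N}{N}\right)=-\frac12\sum_k\mu_k\log\mu_k+O\!\left(\frac{K\log N}{N}\right)$, using $\sum_k\mu_k=1$ and $\log\frac N2=\sum_k\mu_k\log\frac N2$. (When some $|I_k|$ is very small, say below $\log N$, one must be a little careful; but by rigidity~\eqref{eq: rigidity} each $\mu_k$ is comparable to $\int\mu(x)\,\mathbf 1_{[x_k,x_{k+1}]}dx$ for $N\ge c(K)$, which is bounded below by $c/K^{3}$ for the interior bins, so $|I_k|\ge cN/K^{3}$ and the crude bound $\log\Gamma(t)=t\log t-t+O(\log t)$ applies with error $O(K\log N)$ overall after dividing by $N$.)

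Next I would handle the integral over the simplex $A$ by Laplace's method in $K-1$ dimensions. Writing the integrand as $\exp\left(N\,G(\rho)\right)$ with
\[
G(\rho)=\beta\sum_{k=1}^{K}x_{k}\rho_{k}+\frac12\sum_{k=1}^{K}\mu_{k}\log\rho_{k}+O\!\left(\tfrac{K\log N}{N}\right),
\]
the upper bound $\frac1N\log\int_A e^{NG}\le \sup_{\rho\in A}G(\rho)+\frac{1}{N}\log\mathrm{vol}(A)\le \sup_A G+O\!\left(\tfrac{K\log K}{N}\right)$ is immediate since $\mathrm{vol}(A)=\frac{1}{(K-1)!}$. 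For the matching lower bound I would localize near the maximizer $\rho^{*}$ of $G$ over $A$ (which lies in the interior since $G\to-\infty$ on $\partial A$) and integrate over a small box of side $\delta$ around $\rho^{*}$; on that box $G\ge G(\rho^{*})-c K\beta\delta-\frac{c\delta}{\rho^{*}_{\min}}$ where $\rho^*_{\min}=\min_k\rho^*_k$. Since $\rho^{*}_{k}$ is, up to the bounded factor $e^{\beta x_k/\mu_k}\cdot$const., proportional to $\mu_k$ via the stationarity condition $\beta x_k+\frac{\mu_k}{2\rho_k}=\lambda$, one gets $\rho^{*}_{\min}\ge c/K^{3}$, so choosing $\delta=1/K^{4}$ gives a box of volume $\delta^{K-1}$ and error $\frac1N\log(\delta^{K-1})=O\!\left(\tfrac{K\log K}{N}\right)$ plus the Taylor error $O(K\beta\delta+\delta K^{3})=O(1/K^{2})$, which is absorbed into the ``large enough $K$'' and does not affect the displayed $O(K^{3}\log N/N)$ error once we also note that replacing $\sup G$ computed with the $O(K\log N/N)$ term inside by the clean $\sup$ costs only $O(K\log N/N)$. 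Combining the two sides, $\frac1N\log E[\cdots]=\sup_{\rho\in A}\left\{\beta\sum_k x_k\rho_k+\frac12\sum_k\mu_k\log\rho_k\right\}-\frac12\sum_k\mu_k\log\mu_k+O\!\left(\tfrac{K^{3}\log N}{N}\right)$, and rewriting $\rho_k=f_k$ and grouping $\frac12\sum_k\mu_k\log\rho_k-\frac12\sum_k\mu_k\log\mu_k=\frac12\sum_k\mu_k\log\frac{f_k}{\mu_k}$ gives exactly~\eqref{eq: finite number of weights FE}.

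The main obstacle I anticipate is making the error bookkeeping genuinely uniform in $\beta\in(0,C]$ and explicit in $K$ and $N$ — in particular justifying that $\rho^{*}_{\min}$ (equivalently $\min_k\mu_k$ over the bins that carry mass) is bounded below by an inverse polynomial in $K$ for $N\ge c(K)$, which is what controls both the Stirling errors from the tiny bins and the Laplace localization error. This is where rigidity~\eqref{eq: rigidity} and the explicit form of the semicircle density~\eqref{eq: semi-circle law} enter: for the interior bins $I_k$ with $2\le k\le K-1$ the length of $[x_k,x_{k+1}]$ is $2\sqrt2/K$ and the density there is at least $c/\sqrt K$ near the edges, giving $\mu_k\ge c/K^{3}$; the edge bins $I_1,I_K$ may have $\mu_k$ as small as $c/K^{3}$ as well, which is still inverse-polynomial. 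A secondary, purely cosmetic point is that the supremum in~\eqref{eq: finite number of weights FE} is over $f_k\ge0$ with $\sum f_k=1$ whereas the Laplace analysis produces an interior maximizer; these agree because the objective is continuous on the compact simplex and $-\infty$ on the part of the boundary where some $f_k=0$ while $\mu_k>0$, and bins with $\mu_k=0$ (which cannot occur for $N\ge c(K)$ by the above) would contribute nothing. With these points in hand the rest is routine Stirling and Laplace estimation.
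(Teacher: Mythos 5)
Your overall strategy is the same as the paper's: write the expectation via the density of Lemma \ref{lem: density}, extract the entropy $-\frac{1}{2}\sum\mu_{k}\log\mu_{k}$ from the Gamma-function prefactor by Stirling, and treat the simplex integral by Laplace. Your handling of the nuisance exponent $\frac{\left|I_{k}\right|-2}{2}$ via the stationarity condition $\beta x_{k}+\frac{\mu_{k}}{2\rho_{k}}=\lambda$ (which forces $\rho_{k}^{*}\asymp\mu_{k}$ and hence $\rho_{\min}^{*}$ bounded below by an inverse power of $K$) is a clean alternative to the paper's ad-hoc argument, which instead shows directly that any maximizer must have $f_{k}\ge e^{-K^{2}}$ and charges the nuisance term $O\left(K^{3}/N\right)$; your route in fact yields the sharper $O\left(K\log K/N\right)$ there. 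Two small inaccuracies in passing: the lower bound $\mu_{k}\ge cK^{-3/2}$ for the edge bins follows from the deterministic definition \eqref{eq: classical location} of the quantiles $\theta_{i/N}$ and the explicit density \eqref{eq: semi-circle law}; rigidity \eqref{eq: rigidity} is not involved, since the $\theta_{i/N}$ are not the random eigenvalues.

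There is, however, one genuine quantitative gap in your Laplace lower bound. You localize on a box of side $\delta=K^{-4}$ around $\rho^{*}$ and accept a Taylor/Lipschitz error of order $K\beta\delta+\delta/\rho_{\min}^{*}$, which with your own bounds is $O\left(1/K\right)$ --- a quantity that does \emph{not} tend to zero as $N\to\infty$ for fixed $K$ and therefore cannot be ``absorbed'' into the claimed $O\left(K^{3}\log N/N\right)$; as written you only prove the identity up to an extra additive $O\left(1/K\right)$, which is weaker than the statement of the lemma (though it would still suffice for its only application, Lemma \ref{lem: High temperature from truncated}, whose error is $c/K$ anyway). The fix is simply to let the box shrink with $N$: taking, say, $\delta=1/N$ (which lies inside the simplex since $\rho_{\min}^{*}\ge c\left(K\right)>\delta$ for $N\ge c\left(K\right)$) gives a volume cost $\frac{K-1}{N}\log N=O\left(K\log N/N\right)$ and a Lipschitz cost $O\left(K^{3}/N\right)$, both within the allowed error. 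With that adjustment your argument is complete and, on the lower-bound side, actually more explicit than the paper's, which invokes ``the Laplace method'' for the upper bound on \eqref{eq: with density} and leaves the matching lower bound implicit.
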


\begin{proof}
By Lemma \ref{lem: density} the integral $E\left[\exp\left(N\beta\sum_{k=1}^{K}x_{k}\sigma_{\left[k\right]}^{2}\right)\right]$
equals
\begin{equation}
\frac{\Gamma\left(\frac{N}{2}\right)}{\prod_{k=1}^{K}\Gamma\left(\frac{\left|I_{k}\right|}{2}\right)}\int_{\left[0,1\right]^{K-1}}1_{A}\exp\left(N\left\{ \beta\sum_{k=1}^{K}x_{k}\rho_{k}+\sum_{k=1}^{K}\frac{1}{2}\left(\mu_{k}-\frac{2}{N}\right)\log\rho_{k}\right\} \right)d\rho_{1}\ldots d\rho_{K-1}.\label{eq: with density}
\end{equation}
By the Laplace method the integral in (\ref{eq: with density}) is
at most
\begin{equation}
\exp\left(N\left\{ {\displaystyle \sup_{0\le f_{k},f_{1}+\ldots+f_{K}=1}}\left\{ \beta\sum_{k=1}^{K}x_{k}f_{k}+\frac{1}{2}\sum\left(\mu_{k}-\frac{2}{N}\right)\log f_{k}\right\} \right\} \right).\label{eq: bla2}
\end{equation}
To get rid of the nuisance term $\frac{2}{N}$ we use the following
ad-hoc argument. For any maximizer of the $\sup$ the value must exceed
$-\sqrt{2}\beta-\frac{1}{2}\log K$, since one obtains at least this
by setting $f_{1}=\ldots=f_{K}=\frac{1}{K}$. Note that $\mu_{k}\ge\frac{c}{K^{3/2}}$
for all $k,K$, by (\ref{eq: semi-circle law}), so for $N\ge c\left(K\right)$
also $\mu_{k}-\frac{2}{N}\ge\frac{c}{K^{3/2}}$. Assume now that $f_{k}\le e^{-K^{2}}$
from some $k$. Then $\beta\sum_{k=1}^{K}x_{k}f_{k}+\frac{1}{2}\sum\left(\mu_{k}-\frac{2}{N}\right)\log f_{k}\le\sqrt{2}\beta-c\frac{K^{2}}{K^{3/2}}<-\sqrt{2}\beta-\frac{1}{2}\log K$,
for $K$ large enough. So for $K$ large enough and $N\ge c\left(K\right)$
any maximizer in the $\sup$ above must satisfy $f_{k}\ge e^{-K^{2}}$.
But for such $f_{k}$ the nuisance term contributes at most $\frac{K^{3}}{N}$.
Therefore (\ref{eq: bla2}) equals
\[
\exp\left(N\left\{ {\displaystyle \sup_{0\le f_{k},f_{1}+\ldots+f_{K}=1}}\left\{ \beta\sum_{k=1}^{K}x_{k}f_{k}+\frac{1}{2}\sum\mu_{k}\log f_{k}\right\} +O\left(\frac{K^{3}}{N}\right)\right\} \right).
\]

Using the bounds $\Gamma\left(x\right)\asymp\sqrt{2\pi x}\left(x/e\right)^{x}$
for $x\ge\frac{1}{2}$ and $1\le\prod_{k=1}^{K}\left|I_{k}\right|\le N^{K},$
one sees that $\frac{1}{N}\log$ of the factor multiplying the integral
in (\ref{eq: with density}) equals
\[
\begin{array}{l}
\frac{1}{N}\log\frac{\Gamma\left(\frac{N}{2}\right)}{\prod_{k=1}^{K}\Gamma\left(\frac{\left|I_{k}\right|}{2}\right)}\\
=\frac{1}{N}\log\frac{\left(\frac{N}{2}\right)^{\frac{N}{2}}}{\prod_{k=1}^{K}\left(\frac{\left|I_{k}\right|}{2}\right)^{\frac{\left|I_{k}\right|}{2}}}+\frac{1}{N}\log\frac{\sqrt{2\pi\frac{N}{2}}}{\prod_{k=1}^{K}\sqrt{2\pi\frac{\left|I_{k}\right|}{2}}}+O\left(\frac{K}{N}\right)\\
=-\frac{1}{2}\sum\mu_{k}\log\mu_{k}+O\left(\frac{K\log N}{N}\right).
\end{array}
\]
This completes the proof.
\end{proof}
The variational problem on the bottom line of (\ref{eq: finite number of weights FE})
can be solved. To state the result let 
\begin{equation}
g_{K}\left(\lambda\right)=\sum_{k=1}^{K}\frac{\mu_{k}}{\lambda-x_{k}}.\label{eq: gkdef}
\end{equation}
For all $\beta>0$ there is a unique $\lambda_{K}\left(\beta\right)>x_{K}$
such that
\begin{equation}
g_{K}\left(\lambda_{K}\left(\beta\right)\right)=2\beta.\label{eq: lambdakdef}
\end{equation}
Let
\[
h_{K}\left(\lambda\right)=\sum_{k=1}^{K}\mu_{k}\log\left(\lambda-x_{k}\right),
\]
and
\begin{equation}
\mathcal{F}_{K}\left(\beta\right)=\beta\lambda_{K}\left(\beta\right)-\frac{1}{2}-\frac{1}{2}\log\left(2\beta\right)-\frac{1}{2}h_{K}\left(\lambda_{K}\left(\beta\right)\right).\label{eq: F K def}
\end{equation}
The next lemma shows that $\mathcal{F}_{K}\left(\beta\right)$ is
the supremum in the variational problem from (\ref{eq: finite number of weights FE}).
\begin{lem}
\label{lem: Opt problem solution}For each $K$ and $\beta>0$ we
have
\begin{equation}
{\displaystyle \sup_{0\le f_{k},f_{1}+\ldots+f_{K}=1}}\left\{ \beta\sum_{k=1}^{K}x_{k}f_{k}+\frac{1}{2}\sum\mu_{k}\log\frac{f_{k}}{\mu_{k}}\right\} =\mathcal{\mathcal{F}}_{K}\left(\beta\right).\label{eq: supremum}
\end{equation}
\end{lem}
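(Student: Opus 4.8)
The plan is to treat this as a standard strictly concave maximization over the probability simplex, to identify the maximizer by Lagrange multipliers, and then to check that substituting it back into the objective produces exactly $\mathcal{F}_{K}\left(\beta\right)$ after a short partial-fraction simplification. First I would record that $\sum_{k=1}^{K}\mu_{k}=1$, since $I_{1},\ldots,I_{K}$ is a partition of $\left\{ 1,\ldots,N\right\} $. The objective $\Phi\left(f\right)=\beta\sum_{k}x_{k}f_{k}+\frac{1}{2}\sum_{k}\mu_{k}\log\left(f_{k}/\mu_{k}\right)$ is strictly concave on $\left\{ f_{k}\ge0,\ \sum_{k}f_{k}=1\right\} $ (a linear part plus strictly concave logarithms, each $\mu_{k}>0$ by (\ref{eq: semi-circle law})) and tends to $-\infty$ whenever some $f_{k}\to0$; hence it attains its supremum at a unique point $f^{*}$ in the relative interior, at which the Lagrange conditions are both necessary and sufficient.

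The first-order conditions at $f^{*}$ read $\beta x_{k}+\frac{\mu_{k}}{2f_{k}^{*}}=\nu$ for a common multiplier $\nu$ and all $k$, so $f_{k}^{*}=\frac{\mu_{k}}{2\left(\nu-\beta x_{k}\right)}$, which forces $\nu>\beta x_{K}$. Writing $\nu=\beta\lambda$ and imposing $\sum_{k}f_{k}^{*}=1$ turns this into $\sum_{k}\frac{\mu_{k}}{\lambda-x_{k}}=2\beta$, i.e.\ $g_{K}\left(\lambda\right)=2\beta$; by the defining property (\ref{eq: lambdakdef}) of $\lambda_{K}\left(\beta\right)$ (the unique solution in $\left(x_{K},\infty\right)$, which exists since $g_{K}$ decreases continuously from $+\infty$ to $0$ there) we conclude $\lambda=\lambda_{K}\left(\beta\right)$ and $f_{k}^{*}=\frac{\mu_{k}}{2\beta\left(\lambda_{K}\left(\beta\right)-x_{k}\right)}$.

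It then remains to evaluate $\Phi\left(f^{*}\right)$. For the entropy term, $f_{k}^{*}/\mu_{k}=\frac{1}{2\beta\left(\lambda_{K}\left(\beta\right)-x_{k}\right)}$, so using $\sum_{k}\mu_{k}=1$ one gets $\frac{1}{2}\sum_{k}\mu_{k}\log\left(f_{k}^{*}/\mu_{k}\right)=-\frac{1}{2}\log\left(2\beta\right)-\frac{1}{2}h_{K}\left(\lambda_{K}\left(\beta\right)\right)$. For the energy term, the identity $\frac{x_{k}}{\lambda-x_{k}}=-1+\frac{\lambda}{\lambda-x_{k}}$ gives $\beta\sum_{k}x_{k}f_{k}^{*}=\frac{1}{2}\sum_{k}\frac{\mu_{k}x_{k}}{\lambda_{K}\left(\beta\right)-x_{k}}=-\frac{1}{2}+\frac{\lambda_{K}\left(\beta\right)}{2}g_{K}\left(\lambda_{K}\left(\beta\right)\right)=\beta\lambda_{K}\left(\beta\right)-\frac{1}{2}$. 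Adding the two contributions reproduces precisely $\mathcal{F}_{K}\left(\beta\right)$ as defined in (\ref{eq: F K def}), which establishes (\ref{eq: supremum}).

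I do not anticipate a genuine obstacle: the only points requiring care are the boundary behaviour of $\Phi$ (so that the maximizer lies in the interior and the Lagrange conditions characterize it) and checking that the multiplier equation is exactly $g_{K}\left(\lambda\right)=2\beta$, so that $\lambda$ matches $\lambda_{K}\left(\beta\right)$; the remainder is the bookkeeping in the last paragraph, where the partial-fraction trick is what makes the $x_{k}$-dependence collapse into the stated closed form.
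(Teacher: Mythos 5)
Your proposal is correct and follows essentially the same route as the paper's proof: Lagrange multipliers on the simplex, the reparametrization $\nu=\beta\lambda$ leading to $g_{K}\left(\lambda\right)=2\beta$ and hence $\lambda=\lambda_{K}\left(\beta\right)$, and the partial-fraction identity $\frac{x_{k}}{\lambda-x_{k}}=-1+\frac{\lambda}{\lambda-x_{k}}$ to evaluate the energy term as $\beta\lambda_{K}\left(\beta\right)-\frac{1}{2}$. The only difference is cosmetic: you invoke strict concavity to get uniqueness and sufficiency of the first-order conditions, whereas the paper only argues that a global maximizer exists in the interior and must be a critical point of the Lagrangian.
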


\begin{proof}
Since the quantity being maximized tends to $-\infty$ if $f_{k}\to0$
for some $k$ there must be a global maximum satisfying $f_{k}>0$
for all $k$. Using Lagrange multipliers to solve the constrained
optimization problem one considers
\[
\mathcal{L}\left(f_{1},\ldots,f_{k},\lambda\right)=\beta\sum_{k=1}^{K}x_{k}f_{k}+\frac{1}{2}\sum\mu_{k}\log\frac{f_{k}}{\mu_{k}}+\tilde{\lambda}\left(\sum_{k=1}^{K}f_{k}-1\right).
\]
If $\left(f_{1},\ldots,f_{K}\right)$ is a global maximum then there
must be a $\tilde{\lambda}$ such that $\left(f_{1},\ldots,f_{K},\tilde{\lambda}\right)$
is a critical point of $\mathcal{L}$. The critical point equations
of $\mathcal{L}$ read 
\[
\beta x_{k}-\frac{1}{2}\frac{1}{f_{k}}+\tilde{\lambda}=0,k=1,\ldots,K,\text{ and }\sum_{k=1}^{K}f_{k}-1=0.
\]
The first $K$ equations are equivalent to
\begin{equation}
f_{k}=\frac{1}{2\beta}\frac{\mu_{k}}{\lambda-x_{k}},k=1,\ldots,K,\label{eq: maximizer}
\end{equation}
where we reparameterized $\lambda=-\beta\tilde{\lambda}$. Therefore
for some $\lambda>x_{K}$ it holds that
\[
\sum_{k=1}^{K}f_{k}=1,
\]
for the $f_{k}$ in (\ref{eq: maximizer}) and these $f_{k}$ maximize
(\ref{eq: supremum}). Inspection of (\ref{eq: gkdef})-(\ref{eq: lambdakdef})
reveal that $\lambda=\lambda_{K}\left(\beta\right)$ is the unique
such $\lambda$. When $f_{k}$ take the form in (\ref{eq: maximizer})
then
\begin{equation}
\sum_{k=1}^{K}x_{k}f_{k}=\sum_{k=1}^{K}x_{k}\frac{1}{2\beta}\frac{\mu_{k}}{\lambda-x_{k}}=\frac{1}{2\beta}\left(\lambda\sum_{k=1}^{K}\frac{\mu_{k}}{\lambda-x_{k}}-1\right)=\lambda-\frac{1}{2\beta},\label{eq: contributing energy}
\end{equation}
and
\[
\frac{1}{2}\sum\mu_{k}\log\frac{f_{k}}{\mu_{k}}=\frac{1}{2}\sum\mu_{k}\log\frac{1}{2\beta}\frac{1}{\lambda-x_{k}}=-\frac{1}{2}\log\left(2\beta\right)-\frac{1}{2}h_{K}\left(\lambda\right).
\]
Therefore the value of the quantity being maximized at the unique
maximizer is the right-hand side of (\ref{eq: supremum}).
\end{proof}
Note that Lemmas \ref{lem: finite number of weights FE} and \ref{lem: Opt problem solution}
show that the free energy of the coarse-grained Hamiltonians has no
phase transition for any finite $K$. Also those lemmas and the bound
\begin{equation}
\sum_{i=1}^{N}\theta_{i/N}\sigma_{i}^{2}=\sum_{k=1}^{K}x_{k}\sigma_{\left[k\right]}^{2}+O\left(K^{-1}\right),\label{eq: blocking}
\end{equation}
imply that $\mathcal{F}_{K}\left(\beta\right)$ is an approximation
of the free energy of the Hamiltonian $N\sum_{i=1}^{N}\theta_{i/N}\sigma_{i}^{2}$.
\begin{lem}
\label{lem: High temperature from truncated}For all $C>0$ and $K\ge2$
we have
\begin{equation}
\limsup_{N\to\infty}\sup_{\beta\in\left[0,C\right]}\left|\frac{1}{N}\log E^{N}\left[\exp\left(\beta N\sum_{i=1}^{N}\theta_{i/N}\sigma_{i}^{2}\right)\right]-\mathcal{F}_{K}\left(\beta\right)\right|\le\frac{c}{K}.\label{eq: high temperature}
\end{equation}
\end{lem}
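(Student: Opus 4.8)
The plan is to transfer the free energy of the Hamiltonian $N\beta\sum_{i=1}^{N}\theta_{i/N}\sigma_i^2$ to that of its coarse-grained version $N\beta\sum_{k=1}^{K}x_k\sigma_{[k]}^2$, apply Lemma~\ref{lem: finite number of weights FE}, and then read off the value of the resulting variational problem from Lemma~\ref{lem: Opt problem solution}. The first step is to note that the blocking estimate (\ref{eq: blocking}) holds uniformly over the unit sphere with an explicit constant: for $i\in I_k$ one has $\theta_{i/N}\in[x_k,x_{k+1})$ when $k<K$ and $\theta_{i/N}\in[x_K,\sqrt2]$ when $k=K$, so $|\theta_{i/N}-x_k|\le\frac{2\sqrt2}{K}$ in all cases, and summing against $\sigma_i^2$ gives $|\sum_{i=1}^N\theta_{i/N}\sigma_i^2-\sum_{k=1}^K x_k\sigma_{[k]}^2|\le\frac{2\sqrt2}{K}$ whenever $|\sigma|=1$. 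Since $E^{N}$ is supported on the unit sphere, monotonicity of the exponential yields
\[
\Big|\tfrac1N\log E^{N}\big[e^{\beta N\sum_i\theta_{i/N}\sigma_i^2}\big]-\tfrac1N\log E^{N}\big[e^{\beta N\sum_k x_k\sigma_{[k]}^2}\big]\Big|\le\frac{2\sqrt2\,\beta}{K}\le\frac{2\sqrt2\,C}{K}
\]
uniformly in $\beta\in[0,C]$ and in $N$.

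For $K$ at least as large as the threshold appearing in Lemma~\ref{lem: finite number of weights FE} and for $N\ge c(K)$, that lemma identifies $\tfrac1N\log E^{N}[e^{\beta N\sum_k x_k\sigma_{[k]}^2}]$ with the variational expression $\sup_{0\le f_k,\,\sum_k f_k=1}\{\beta\sum_k x_k f_k+\tfrac12\sum_k\mu_k\log(f_k/\mu_k)\}$ up to an error $O(K^{3}\log N/N)$ that is uniform in $\beta\in(0,C]$, and Lemma~\ref{lem: Opt problem solution} shows this supremum equals $\mathcal{F}_K(\beta)$. Chaining these facts with the previous display gives, uniformly in $\beta\in(0,C]$,
\[
\Big|\tfrac1N\log E^{N}\big[e^{\beta N\sum_i\theta_{i/N}\sigma_i^2}\big]-\mathcal{F}_K(\beta)\Big|\le\frac{2\sqrt2\,C}{K}+O\Big(\frac{K^{3}\log N}{N}\Big).
\]
At $\beta=0$ the left side of the lemma is $\tfrac1N\log1=0$ and $\mathcal{F}_K(0)=0$ (the $\beta=0$ variational problem being maximized at $f_k=\mu_k$); since moreover both sides are continuous in $\beta$ on $[0,C]$, the supremum over $(0,C]$ equals the supremum over $[0,C]$.

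Taking $\limsup_{N\to\infty}$ the term $O(K^{3}\log N/N)$ vanishes for each fixed $K$, leaving $\limsup_{N}\sup_{\beta\in[0,C]}|\cdots|\le\frac{2\sqrt2\,C}{K}$, which is of the asserted form $\le c/K$; for the finitely many values of $K$ below the threshold of Lemma~\ref{lem: finite number of weights FE} one simply enlarges $c$, using that $\tfrac1N\log E^{N}[e^{\beta N\sum_i\theta_{i/N}\sigma_i^2}]\in[0,\sqrt2\,C]$ and that $\mathcal{F}_K(\beta)$ is bounded uniformly in $\beta\in[0,C]$. I do not expect any genuine obstacle here: all of the analytic content is already contained in Lemmas~\ref{lem: finite number of weights FE} and~\ref{lem: Opt problem solution}, and the only point that needs care is that the $\limsup$ in $N$ eliminates the $K^{3}\log N/N$ correction so that only the $K$-dependent blocking error $O(C/K)$ survives.
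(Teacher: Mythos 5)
Your argument is correct and is exactly the route the paper intends: the paper gives no separate proof of this lemma, merely noting that it follows from Lemma \ref{lem: finite number of weights FE}, Lemma \ref{lem: Opt problem solution}, and the blocking bound (\ref{eq: blocking}), and your write-up fills in precisely those steps (the uniform $2\sqrt{2}\beta/K$ blocking error on the sphere, the $O(K^{3}\log N/N)$ term killed by the $\limsup$ in $N$, and the edge cases $\beta=0$ and small $K$).
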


We now investigate the behavior of $\mathcal{F}_{K}\left(\beta\right)$
as $K\to\infty$. Let

\begin{equation}
g\left(\lambda\right)=\int_{-\sqrt{2}}^{\sqrt{2}}\frac{\mu\left(x\right)}{\lambda-x}dx\text{ for }\lambda\ge\sqrt{2}.\label{eq: gdef}
\end{equation}
By standard estimates for Riemann sums 
\begin{equation}
\lim_{K\to\infty}g_{K}\left(\lambda\right)=g\left(\lambda\right)\text{ for }\lambda>\sqrt{2}.\label{eq: g conv}
\end{equation}
The integral can be computed explicitly, and in fact
\[
g\left(\lambda\right)=\lambda-\sqrt{\lambda^{2}-2}.
\]
Note that $g\left(\sqrt{2}\right)=\sqrt{2}$. If $\beta\le\frac{1}{\sqrt{2}}$
there is a unique $\lambda\left(\beta\right)\ge\sqrt{2}$ such that
$g\left(\lambda\left(\beta\right)\right)=2\beta$. In fact 
\begin{equation}
\lambda\left(\beta\right)=\frac{1}{\sqrt{2}}\left(\sqrt{2}\beta+\frac{1}{\sqrt{2}\beta}\right)\text{ for }\beta\le\frac{1}{\sqrt{2}}.\label{eq: lambda beta}
\end{equation}
The convergence (\ref{eq: g conv}) implies that 
\begin{equation}
\lim_{K\to\infty}\lambda_{K}\left(\beta\right)=\lambda\left(\beta\right)\text{ for }\beta<\frac{1}{\sqrt{2}}.\label{eq: lambda conv}
\end{equation}
Also define
\[
h\left(\lambda\right)=\int_{-\sqrt{2}}^{\sqrt{2}}\mu\left(x\right)\log\left(\lambda-x\right)dx\text{ for }\lambda\ge\sqrt{2},
\]
which can be computed explicitly as 
\begin{equation}
h\left(\lambda\right)=\frac{\lambda^{2}}{2}-\frac{1}{2}-\frac{\lambda\sqrt{\lambda^{2}-2}}{2}+\log\left(\frac{\lambda+\sqrt{\lambda^{2}-2}}{2}\right).\label{eq: h lambda}
\end{equation}
By the convergence of the Riemann sum
\begin{equation}
\lim_{K\to\infty}h_{K}\left(\lambda\right)=h\left(\lambda\right)\text{ for }\lambda>\sqrt{2}.\label{eq: h conv}
\end{equation}

Define
\begin{equation}
\mathcal{F}\left(\beta\right)=\beta\lambda\left(\beta\right)-\frac{1}{2}-\frac{1}{2}\log\left(2\beta\right)-\frac{1}{2}h\left(\lambda\left(\beta\right)\right),\beta\in\left[0,\frac{1}{\sqrt{2}}\right].\label{eq: F def}
\end{equation}
Using the identities (\ref{eq: lambda beta}) and (\ref{eq: h lambda}),
this expression simplifies to
\begin{equation}
\mathcal{F\left(\beta\right)}=\frac{\beta^{2}}{2}\text{ for }\beta\in\left[0,\frac{1}{\sqrt{2}}\right].\label{eq: F beta squared over two}
\end{equation}
Also it follows from (\ref{eq: lambda conv}) and (\ref{eq: h conv})
and the monotonicity of $h_{K}\left(\lambda\right)$ that
\begin{equation}
\lim_{K\to\infty}\mathcal{F}_{K}\left(\beta\right)=\mathcal{F}\left(\beta\right)\text{ if }\beta<\frac{1}{\sqrt{2}}.\label{eq:F pointwise conv}
\end{equation}
A posteriori it is clear that for $\beta>\frac{1}{\sqrt{2}}$ the
function $\mathcal{F}_{K}\left(\beta\right)$ converges to the low-temperature
free energy of the Hamiltonian $H_{N}\left(\sigma\right)$ without
external field, but this is not a step in the proof of our main results,
but rather a consequence. 

In the proof of Proposition \ref{prop: UB diag deterministic} at
the end of the next section we will use the two lemmas that now follow
to rule out $m$ that do not satisfy Plefka's condition. First note
that $g_{K}\left(\lambda\right),\lambda_{K}\left(\beta\right),h_{K}\left(\lambda\right)$
and thus $\mathcal{\mathcal{F}}_{K}\left(\beta\right)$ are all continuous
and differentiable. We have the following identity.
\begin{lem}
\label{lem: FE K deriv}For all $\beta>0$
\begin{equation}
\mathcal{F}_{K}^{'}\left(\beta\right)=\lambda_{K}\left(\beta\right)-\frac{1}{2\beta}.\label{eq: derivative}
\end{equation}
\end{lem}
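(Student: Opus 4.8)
The plan is a direct computation via the chain rule, exploiting the identity $h_{K}^{'}=g_{K}$ together with the defining relation $g_{K}\left(\lambda_{K}\left(\beta\right)\right)=2\beta$ to produce an envelope-type cancellation.

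First I would differentiate $\mathcal{F}_{K}\left(\beta\right)=\beta\lambda_{K}\left(\beta\right)-\frac{1}{2}-\frac{1}{2}\log\left(2\beta\right)-\frac{1}{2}h_{K}\left(\lambda_{K}\left(\beta\right)\right)$ (see (\ref{eq: F K def})) term by term. Differentiability of $\lambda_{K}\left(\beta\right)$ was already noted above, and in fact follows from the implicit function theorem applied to (\ref{eq: lambdakdef}), since $g_{K}$ is strictly decreasing on $\left(x_{K},\infty\right)$ so $g_{K}^{'}\left(\lambda_{K}\left(\beta\right)\right)\neq0$. The product rule gives $\frac{d}{d\beta}\left[\beta\lambda_{K}\left(\beta\right)\right]=\lambda_{K}\left(\beta\right)+\beta\lambda_{K}^{'}\left(\beta\right)$, the logarithmic term contributes $-\frac{1}{2\beta}$, and the chain rule gives $\frac{d}{d\beta}\left[-\frac{1}{2}h_{K}\left(\lambda_{K}\left(\beta\right)\right)\right]=-\frac{1}{2}h_{K}^{'}\left(\lambda_{K}\left(\beta\right)\right)\lambda_{K}^{'}\left(\beta\right)$.

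Next I would observe from the definition of $h_{K}$ that $h_{K}^{'}\left(\lambda\right)=\sum_{k=1}^{K}\frac{\mu_{k}}{\lambda-x_{k}}=g_{K}\left(\lambda\right)$, cf. (\ref{eq: gkdef}). Evaluating at $\lambda=\lambda_{K}\left(\beta\right)$ and using (\ref{eq: lambdakdef}) yields $h_{K}^{'}\left(\lambda_{K}\left(\beta\right)\right)=g_{K}\left(\lambda_{K}\left(\beta\right)\right)=2\beta$. Substituting this into the last term shows it equals $-\beta\lambda_{K}^{'}\left(\beta\right)$, which cancels exactly the term $\beta\lambda_{K}^{'}\left(\beta\right)$ coming from $\frac{d}{d\beta}\left[\beta\lambda_{K}\left(\beta\right)\right]$. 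Collecting the surviving terms gives $\mathcal{F}_{K}^{'}\left(\beta\right)=\lambda_{K}\left(\beta\right)-\frac{1}{2\beta}$, which is (\ref{eq: derivative}).

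I do not anticipate any genuine obstacle; the only point requiring a little care is the cancellation of the $\lambda_{K}^{'}\left(\beta\right)$ contributions, which is precisely the standard envelope phenomenon enabled by the relation $h_{K}^{'}=g_{K}$ and the constraint defining $\lambda_{K}$.
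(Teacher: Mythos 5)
Your computation is correct and is exactly the argument the paper intends: the paper's one-line proof invokes precisely the identities $h_{K}^{'}=g_{K}$ and $g_{K}\left(\lambda_{K}\left(\beta\right)\right)=2\beta$ to produce the envelope cancellation of the $\lambda_{K}^{'}\left(\beta\right)$ terms that you carry out explicitly. Your additional remark on differentiability of $\lambda_{K}$ via the implicit function theorem is a correct filling-in of a detail the paper leaves implicit.
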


\begin{proof}
This follows from the definition (\ref{eq: F K def}) and the equalities
$h_{K}^{'}=g_{K}$ and $g_{K}\left(\lambda\left(\beta\right)\right)=2\beta$.
\end{proof}

\begin{lem}
\label{lem: lambda large is high temp}For all $K\ge2$ there is an
$\varepsilon\in\left(0,\frac{2\sqrt{2}}{K}\right)$ such that
\[
\lambda_{K}\left(\beta\right)\ge\sqrt{2}-\varepsilon\implies\beta\le\frac{1}{\sqrt{2}}.
\]
\end{lem}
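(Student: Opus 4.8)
The statement I need to prove is Lemma~\ref{lem: lambda large is high temp}: for each fixed $K\ge 2$, there is an $\varepsilon\in\left(0,\tfrac{2\sqrt 2}{K}\right)$ such that $\lambda_K\left(\beta\right)\ge\sqrt 2-\varepsilon$ forces $\beta\le\tfrac1{\sqrt 2}$. The natural approach is to exploit monotonicity. First I would observe that $g_K$ is strictly decreasing on $\left(x_K,\infty\right)$ with $g_K\left(\lambda\right)\to 0$ as $\lambda\to\infty$ and $g_K\left(\lambda\right)\to+\infty$ as $\lambda\downarrow x_K$, so $\lambda_K\left(\beta\right)$, defined by $g_K\left(\lambda_K\left(\beta\right)\right)=2\beta$, is a strictly decreasing function of $\beta$ (equivalently, $\beta\mapsto\lambda_K\left(\beta\right)$ and its inverse $\lambda\mapsto\tfrac12 g_K\left(\lambda\right)$ are order-reversing bijections). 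Hence the implication ``$\lambda_K\left(\beta\right)\ge\sqrt 2-\varepsilon\implies\beta\le\tfrac1{\sqrt 2}$'' is equivalent to ``$\lambda_K\left(\tfrac1{\sqrt 2}\right)<\sqrt 2-\varepsilon$'' for a suitable $\varepsilon$, i.e.\ I just need to show that $\lambda_K\left(\tfrac1{\sqrt 2}\right)$ is strictly less than $\sqrt 2$ (then take any $\varepsilon$ with $0<\varepsilon<\min\{\sqrt 2-\lambda_K\left(\tfrac1{\sqrt2}\right),\,\tfrac{2\sqrt2}{K}\}$). Wait---I should double check the direction: larger $\beta$ means smaller $\lambda_K\left(\beta\right)$, so $\lambda_K\left(\beta\right)\ge\sqrt2-\varepsilon$ means $\beta\le\lambda_K^{-1}\left(\sqrt2-\varepsilon\right)$, and I want this last quantity to be $\le\tfrac1{\sqrt2}$, which (again by monotonicity) holds iff $\lambda_K\left(\tfrac1{\sqrt2}\right)\le\sqrt2-\varepsilon$. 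Good, so the whole lemma reduces to the single numerical fact $\lambda_K\left(\tfrac1{\sqrt2}\right)<\sqrt2$.

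To establish $\lambda_K\left(\tfrac1{\sqrt2}\right)<\sqrt 2$, by the decreasing nature of $g_K$ it suffices to show $g_K\left(\sqrt2\right)<2\cdot\tfrac1{\sqrt2}=\sqrt2$. Now $g_K\left(\sqrt 2\right)=\sum_{k=1}^K\tfrac{\mu_k}{\sqrt2-x_k}$, where $x_k=-\sqrt2+\left(k-1\right)\tfrac{2\sqrt2}{K}$, so $\sqrt2-x_k=\left(K-k+1\right)\tfrac{2\sqrt2}{K}>0$ for all $k=1,\dots,K$; in particular every term is finite (the potential blow-up of the continuum $g\left(\lambda\right)=\lambda-\sqrt{\lambda^2-2}$ at $\lambda=\sqrt2$ is an artifact of the integral, and the discrete sum $g_K$ stays bounded because $x_K=\sqrt2-\tfrac{2\sqrt2}{K}<\sqrt2$ strictly). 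Compare $g_K\left(\sqrt2\right)$ with the Riemann integral $g\left(\sqrt2\right)=\int_{-\sqrt2}^{\sqrt2}\tfrac{\mu\left(x\right)}{\sqrt2-x}\,dx=\sqrt2$: since $x\mapsto\tfrac{\mu\left(x\right)}{\sqrt2-x}$ is (eventually) increasing near the right endpoint while $\mu$ is decreasing near $\sqrt2$, a careful left/right Riemann-sum comparison is needed. The cleanest route is to write $\mu_k=\int_{I_k'}\mu\left(x\right)dx$ over the appropriate classical-location cell $I_k'$ and compare $\tfrac{1}{\sqrt2-x_k}$ with $\tfrac1{\sqrt2-x}$ on that cell; because $x_k\le x$ on the relevant pieces one direction of the bound is immediate, but controlling the last cell $I_K$ (which reaches up to $\sqrt2$) requires using the square-root vanishing $\mu\left(x\right)\sim\tfrac1\pi\sqrt{2-x^2}\le c\sqrt{\sqrt2-x}$ to show $\int_{x_K}^{\sqrt2}\tfrac{\mu\left(x\right)}{\sqrt2-x_K}\,dx$ is only slightly larger than $\int_{x_K}^{\sqrt2}\tfrac{\mu\left(x\right)}{\sqrt2-x}\,dx$, with a gap of order $K^{-3/2}$ that does not destroy the strict inequality.

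Actually, for the purpose of this lemma I do not even need a sharp comparison, only the \emph{strict} inequality $g_K\left(\sqrt2\right)<\sqrt2$ for each fixed $K$, so I can afford to be crude. The plan is: split $g_K\left(\sqrt2\right)=\sum_{k<K}\tfrac{\mu_k}{\sqrt2-x_k}+\tfrac{\mu_K}{\sqrt2-x_K}$; for $k<K$ use that $\tfrac1{\sqrt2-x_k}\le\tfrac1{\sqrt2-x}$ for all $x\in\left[x_k,x_{k+1}\right)$ together with $\mu_k=\int_{\theta^{-1}\text{-cell}}\mu\,dx$ to get $\sum_{k<K}\tfrac{\mu_k}{\sqrt2-x_k}\le\int_{-\sqrt2}^{x_K}\tfrac{\mu\left(x\right)}{\sqrt2-x}\,dx$; and for $k=K$ bound $\mu_K\le\int_{x_K}^{\sqrt2}c\sqrt{\sqrt2-x}\,dx\cdot\left(\text{const}\right)$, hmm, more carefully $\mu_K=\int_{x_K}^{\sqrt2}\mu\left(x\right)dx$ and $\tfrac1{\sqrt2-x_K}=\tfrac{K}{2\sqrt2}$, so $\tfrac{\mu_K}{\sqrt2-x_K}\le\tfrac{K}{2\sqrt2}\int_{x_K}^{\sqrt2}\tfrac1\pi\sqrt2\sqrt{\sqrt2-x}\,dx=\tfrac{K}{2\sqrt2}\cdot O\left(K^{-3/2}\right)=O\left(K^{-1/2}\right)$. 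Adding, $g_K\left(\sqrt2\right)\le\int_{-\sqrt2}^{x_K}\tfrac{\mu\left(x\right)}{\sqrt2-x}\,dx+O\left(K^{-1/2}\right)$. The remaining point is that $\int_{-\sqrt2}^{x_K}\tfrac{\mu}{\sqrt2-x}\,dx$ is strictly less than $\int_{-\sqrt2}^{\sqrt2}\tfrac{\mu}{\sqrt2-x}\,dx=\sqrt2$ by an amount $\int_{x_K}^{\sqrt2}\tfrac{\mu}{\sqrt2-x}\,dx$; one checks $\tfrac{\mu\left(x\right)}{\sqrt2-x}\ge\tfrac{c}{\sqrt{\sqrt2-x}}$ near $\sqrt2$, so this missing mass is of order $K^{-1/2}$ as well---so the two $K^{-1/2}$ contributions could in principle cancel, and I must make the \emph{constants} work out, showing the deficit outweighs the surplus for all $K$. \textbf{This constant-tracking in the last cell is the main obstacle}: one needs $\tfrac{K}{2\sqrt2}\int_{x_K}^{\sqrt2}\mu\left(x\right)dx<\int_{x_K}^{\sqrt2}\tfrac{\mu\left(x\right)}{\sqrt2-x}\,dx$, which, since on $\left(x_K,\sqrt2\right)$ we have $\tfrac1{\sqrt2-x}>\tfrac1{\sqrt2-x_K}=\tfrac{K}{2\sqrt2}$ \emph{except that} this says the wrong thing---$\tfrac1{\sqrt2-x}>\tfrac{K}{2\sqrt2}$ on that interval, so actually $\int_{x_K}^{\sqrt2}\tfrac{\mu}{\sqrt2-x}dx>\tfrac{K}{2\sqrt2}\int_{x_K}^{\sqrt2}\mu\,dx=\tfrac{\mu_K}{\sqrt2-x_K}$ \emph{pointwise}, giving the strict inequality term-by-term on the last cell for free. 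Combined with the non-strict inequality on cells $k<K$, this yields $g_K\left(\sqrt2\right)<\int_{-\sqrt2}^{\sqrt2}\tfrac{\mu}{\sqrt2-x}\,dx=\sqrt2$ cleanly, and the lemma follows. I would double-check the subtlety that $\mu_k$ is defined via the classical locations $\theta_{i/N}$ counting integers $i$, so $\mu_k=\tfrac{|I_k|}{N}$ differs from $\int_{x_k}^{x_{k+1}}\mu\,dx$ by $O\left(1/N\right)$; since the lemma is about fixed $K$ and the quantities $g_K,\lambda_K$ implicitly involve a limit in $N$ (or are defined with the limiting $\mu$), I would simply take $\mu_k=\int_{x_k}^{x_{k+1}}\mu\left(x\right)dx$ as the definition consistent with the rest of the section, or note the $O\left(1/N\right)$ discrepancy is harmless.
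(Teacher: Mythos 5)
Your proposal is correct and follows essentially the same route as the paper: reduce by monotonicity of $\lambda_{K}$ to the single fact $\lambda_{K}\left(\tfrac{1}{\sqrt{2}}\right)<\sqrt{2}$, which follows from $g_{K}\left(\sqrt{2}\right)<g\left(\sqrt{2}\right)=\sqrt{2}$ since $g_{K}$ is decreasing. The only difference is cosmetic: you spell out the term-by-term Riemann-sum comparison giving $g_{K}<g$ (which the paper merely asserts by pointing to the definitions), and you enforce $\varepsilon<\tfrac{2\sqrt{2}}{K}$ by taking a minimum rather than by noting $x_{K}<\lambda_{K}\left(\tfrac{1}{\sqrt{2}}\right)$ as the paper does.
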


\begin{proof}
We may set $\sqrt{2}-\varepsilon=\sqrt{2}-\lambda_{K}\left(\frac{1}{\sqrt{2}}\right)$
since
\[
\lambda_{K}\left(\beta\right)\ge\lambda_{K}\left(\frac{1}{\sqrt{2}}\right)\implies\beta\le\frac{1}{\sqrt{2}},
\]
and
\[
x_{K}<\lambda_{K}\left(\frac{1}{\sqrt{2}}\right)<\lambda\left(\frac{1}{\sqrt{2}}\right)=\sqrt{2},
\]
where the second inequality follows because $g_{K}\left(\lambda\right)<g\left(\lambda\right)$
for $\lambda\ge\sqrt{2}$ (see (\ref{eq: Ik def}), (\ref{eq: muk def}),
(\ref{eq: gdef}), (\ref{eq: gkdef})) and $g_{K}\left(\lambda\right)$
is decreasing in $\lambda$, implying that the solution to $g_{K}\left(\lambda\right)=\sqrt{2}$
must occur for $\lambda<\sqrt{2}$.
\end{proof}
Lemma \ref{lem: FE K deriv} also allows us to strengthen the pointwise
convergence (\ref{eq:F pointwise conv}) to uniform convergence.
\begin{lem}
\label{lem: F K uniform conv}We have
\begin{equation}
\lim_{K\to\infty}\sup_{\beta\in\left[0,\frac{1}{\sqrt{2}}\right]}\left|\mathcal{F}_{K}\left(\beta\right)-\mathcal{F}\left(\beta\right)\right|=0.\label{eq: F K uniform conv}
\end{equation}
\end{lem}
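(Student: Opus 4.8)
The plan is to upgrade the pointwise convergence $\lambda_K(\beta)\to\lambda(\beta)$ of (\ref{eq: lambda conv}) to uniform convergence of $\mathcal{F}_K$ to $\mathcal{F}$, using the derivative formula of Lemma~\ref{lem: FE K deriv}. Differentiating (\ref{eq: F beta squared over two}) gives $\mathcal{F}'(\beta)=\beta$, and by (\ref{eq: lambda beta}) one has $\lambda(\beta)-\frac{1}{2\beta}=\beta$, so Lemma~\ref{lem: FE K deriv} yields
\[
\mathcal{F}_K'(\beta)-\mathcal{F}'(\beta)=\lambda_K(\beta)-\lambda(\beta)\qquad\text{for }\beta\in\bigl(0,\tfrac1{\sqrt2}\bigr].
\]
Both $\mathcal{F}_K$ and $\mathcal{F}$ are continuous on $\bigl[0,\tfrac1{\sqrt2}\bigr]$ with $\mathcal{F}_K(0)=\mathcal{F}(0)=0$; for $\mathcal{F}_K$ this follows from the variational formula of Lemma~\ref{lem: Opt problem solution} evaluated at $\beta=0$, whose supremum $\sup_f\frac12\sum_k\mu_k\log(f_k/\mu_k)$ equals $0$ (being minus half a relative entropy, maximized at $f=\mu$). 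Moreover $\mathcal{F}_K'-\mathcal{F}'=\lambda_K-\lambda$ extends to a bounded function on $\bigl(0,\tfrac1{\sqrt2}\bigr]$ even though $\lambda_K$ and $\lambda$ individually blow up as $\beta\downarrow0$, so integrating the displayed identity from $0$ to $\beta$ gives, for all $\beta\in\bigl[0,\tfrac1{\sqrt2}\bigr]$,
\[
\bigl|\mathcal{F}_K(\beta)-\mathcal{F}(\beta)\bigr|\le\int_0^{1/\sqrt2}\bigl|\lambda_K(t)-\lambda(t)\bigr|\,dt,
\]
and it suffices to show the right-hand side tends to $0$ as $K\to\infty$.

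This I would do by bounded convergence. By (\ref{eq: lambda conv}) the integrand converges to $0$ for every $t\in\bigl(0,\tfrac1{\sqrt2}\bigr)$, hence almost everywhere on $\bigl[0,\tfrac1{\sqrt2}\bigr]$. For a uniform dominating bound I would use the defining relation $g_K(\lambda_K(t))=2t$ from (\ref{eq: gkdef})--(\ref{eq: lambdakdef}) together with $\sum_k\mu_k=1$ and $-\sqrt2\le x_k\le x_K<\sqrt2$: comparing each term $\frac{\mu_k}{\lambda_K(t)-x_k}$ with $\frac{\mu_k}{\lambda_K(t)+\sqrt2}$ gives $\lambda_K(t)\ge\frac1{2t}-\sqrt2$, and comparing with $\frac{\mu_k}{\lambda_K(t)-x_K}$ gives $\lambda_K(t)\le\frac1{2t}+\sqrt2$; the explicit formula $\lambda(t)=t+\frac1{2t}$ from (\ref{eq: lambda beta}) places $\lambda(t)$ in the same window $\bigl[\frac1{2t}-\sqrt2,\frac1{2t}+\sqrt2\bigr]$. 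Therefore $|\lambda_K(t)-\lambda(t)|\le2\sqrt2$ for all $K$ and all $t\in\bigl(0,\tfrac1{\sqrt2}\bigr]$, and the bounded convergence theorem on the finite interval $\bigl[0,\tfrac1{\sqrt2}\bigr]$ gives $\int_0^{1/\sqrt2}|\lambda_K-\lambda|\to0$, which completes the proof.

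The only step requiring a little care is the right endpoint $\beta=\tfrac1{\sqrt2}$, where $\lambda(\tfrac1{\sqrt2})=\sqrt2$ sits at the edge of the semicircle support, $\lambda_K(\tfrac1{\sqrt2})<\sqrt2$, and (\ref{eq: lambda conv}) does not assert convergence; but this is a single point and hence irrelevant for the integral, so no extra input is needed there. Likewise the blow-up of $\lambda_K$ and $\lambda$ at $\beta=0$ is harmless, since only their (bounded) difference enters the integral. I do not expect a serious obstacle: the point of using Lemma~\ref{lem: FE K deriv} is exactly to convert the edge-sensitive pointwise convergence of $\lambda_K$ into an $L^1$ — and thus, after integrating from the common value at $\beta=0$, uniform — convergence of $\mathcal{F}_K$.
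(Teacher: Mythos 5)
Your proof is correct, but it takes a genuinely different route from the paper's. The paper exploits monotonicity: since each $\mathcal{F}_{K}$ is increasing in $\beta$ and the pointwise limit $\mathcal{F}(\beta)=\beta^{2}/2$ is continuous and increasing, pointwise convergence upgrades to uniform convergence on $\left[0,\frac{1}{\sqrt{2}}-\delta\right]$ (a P\'olya/Dini-type argument), and the remaining sliver $\left[\frac{1}{\sqrt{2}}-\delta,\frac{1}{\sqrt{2}}\right]$ is controlled by a Lipschitz bound uniform in $K$, obtained from Lemma \ref{lem: FE K deriv} together with the monotonicity of $\lambda_{K}$ and the boundedness of $\lambda_{K}\left(\frac{1}{2}\right)$. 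You instead integrate the derivative identity $\mathcal{F}_{K}'-\mathcal{F}'=\lambda_{K}-\lambda$ from the common value $0$ at $\beta=0$ and conclude by dominated convergence, with the explicit sandwich $\frac{1}{2t}-\sqrt{2}\le\lambda_{K}(t),\lambda(t)\le\frac{1}{2t}+\sqrt{2}$ supplying the dominating bound; the problematic right endpoint, where (\ref{eq: lambda conv}) is silent, is then harmless because it is a null set for the integral rather than because of a Lipschitz estimate. Both arguments lean on Lemma \ref{lem: FE K deriv}, but yours uses it globally while the paper uses it only near $\beta=\frac{1}{\sqrt{2}}$. The one point deserving a remark is your assignment $\mathcal{F}_{K}(0)=0$: the defining formula (\ref{eq: F K def}) degenerates at $\beta=0$, so one must read $\mathcal{F}_{K}(0)$ through the variational formula of Lemma \ref{lem: Opt problem solution} (a sup of affine, uniformly $\sqrt{2}$-Lipschitz functions of $\beta$, hence continuous down to $0$ with value $0$ there), exactly as you do; alternatively one can integrate from any fixed $\beta_{0}>0$ and use pointwise convergence at $\beta_{0}$. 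With that reading your argument is complete.
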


\begin{proof}
The $\mathcal{F}_{K}\left(\beta\right)$ are increasing in $\beta$
(because the left-hand side of (\ref{eq: supremum}) is) and $\mathcal{F}\left(\beta\right)$
is increasing in $\beta\in\left[0,\frac{1}{\sqrt{2}}\right]$ and
uniformly continuous (recall (\ref{eq: F beta squared over two})).
This implies that the pointwise convergence (\ref{eq:F pointwise conv})
can be strengthened to uniform convergence on $\left[0,\frac{1}{\sqrt{2}}-\delta\right]$
for any $\delta>0,$ i.e. 
\[
\lim_{K\to\infty}\sup_{\beta\in\left[0,\frac{1}{\sqrt{2}}-\delta\right]}\left|\mathcal{F}_{K}\left(\beta\right)-\mathcal{F}\left(\beta\right)\right|=0.
\]
For any $\delta>0$ we have
\[
\sup_{\beta\in\left[\frac{1}{\sqrt{2}}-\delta,\frac{1}{\sqrt{2}}\right]}\left|\mathcal{F}\left(\beta\right)-\mathcal{F}\left(\beta-\delta\right)\right|\le c\delta,
\]
and for any $\delta\in\left(0,\frac{1}{2\sqrt{2}}\right)$ (say) we
have uniformly in $K$ that
\[
\sup_{\beta\in\left[\frac{1}{\sqrt{2}}-\delta,\frac{1}{\sqrt{2}}\right]}\left|\mathcal{F}_{K}\left(\beta\right)-\mathcal{F}_{K}\left(\beta-\delta\right)\right|\le c\delta,
\]
by (\ref{eq: derivative}) ($\lambda_{K}\left(\beta\right)$ is decreasing
in $\beta$ and $\lambda_{K}\left(\frac{1}{2}\right)$ is bounded
by (\ref{eq: lambda conv})). Thus for such $\delta$ also
\[
\sup_{\beta\in\left[\frac{1}{\sqrt{2}}-\delta,\frac{1}{\sqrt{2}}\right]}\left|\mathcal{F}_{K}\left(\beta\right)-\mathcal{F}\left(\beta\right)\right|\le c\delta.
\]
Thus
\[
\lim_{K\to\infty}\sup_{\beta\in\left[0,\frac{1}{\sqrt{2}}\right]}\left|\mathcal{F}_{K}\left(\beta\right)-\mathcal{F}\left(\beta\right)\right|\le c\delta,
\]
for all $\delta\in\left(0,\frac{1}{2\sqrt{2}}\right)$, so the claim
(\ref{eq: F K uniform conv}) follows.
\end{proof}

\subsection{\label{sec: MN space}Making the external field after recentering
vanish}

As for the lower bound, an important step in the proof of the upper
bound is to recenter the Hamiltonian around an $m\in\mathbb{R}^{N}$
which yields an effective external field $\beta\frac{1}{N}\nabla\tilde{H}_{N}\left(m\right)+\tilde{h}_{N}$
(cf. (\ref{eq: decomp with ext field})-(\ref{eq: eff ext field})
and (\ref{eq: recenter})). In this section the main goal is Lemma
\ref{lem: low dim space that gets rid of ext field}, which constructs
a low-dimensional subspace $\mathcal{M}_{N}\subset\mathbb{R}^{N}$,
such that if we recenter around any $m\in\mathcal{M}_{N}$ the effective
external field is again (almost) contained in $\mathcal{M}_{N}$ (so
that if we restrict to the space perpendicular to $\mathcal{M}_{N}$,
the effective external field after recentering almost vanishes). Its
use will be an important step in the proof of the upper bound in the
next subsection. 

The construction in the proof of Lemma \ref{lem: low dim space that gets rid of ext field}
will involve taking the span of a vector (close to) $\tilde{h}_{N}$
iterated under the the map $\frac{1}{N}\nabla\tilde{H}_{N}$. For
this the next lemma will be needed, whose claim (\ref{eq: proj bound})
says says that after applying the map $\frac{1}{N}\nabla\tilde{H}_{N}$
to a vector $v\in\mathbb{R}^{N}$ a large number of times, the resulting
vector will be almost completely contained in the space spanned by
the eigenvectors associated to the eigenvalues of largest magnitude.
Let $\Pi^{A}$ denote the projection onto a subspace $A\subset\mathbb{R}^{N}$. 
\begin{lem}
\label{lem: iterate bound}For any $\varepsilon>0$, $N\ge1$, $v\in\mathbb{R}^{N}$
with $v_{N}\ne0$ and $k\ge1$ it holds that 
\begin{equation}
\frac{\left|\Pi^{\left\langle e_{i}:\left|\theta_{i/N}\right|<\sqrt{2}-\varepsilon\right\rangle }\left(\frac{1}{N}\nabla\tilde{H}_{N}\right)^{k}v\right|}{\left|\left(\frac{1}{N}\nabla\tilde{H}_{N}\right)^{k}v\right|}\le\sqrt{N}\left|v\right|v_{N}^{-1}e^{-c\varepsilon k}.\label{eq: proj bound}
\end{equation}
\end{lem}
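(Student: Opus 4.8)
The plan is to work entirely in the diagonalizing basis, where the map $\frac{1}{N}\nabla\tilde{H}_{N}$ is linear and diagonal. Since $\tilde{H}_{N}(\sigma)=N\sum_{i}\theta_{i/N}\sigma_{i}^{2}$, we have $\frac{1}{N}\nabla\tilde{H}_{N}(v)=2(\theta_{1/N}v_{1},\ldots,\theta_{N/N}v_{N})$, so the $k$-th iterate acts coordinatewise as multiplication by $(2\theta_{i/N})^{k}$. Thus $\bigl((\frac{1}{N}\nabla\tilde{H}_{N})^{k}v\bigr)_{i}=(2\theta_{i/N})^{k}v_{i}$. The numerator of the left-hand side of \eqref{eq: proj bound} is $\bigl(\sum_{i:\,|\theta_{i/N}|<\sqrt{2}-\varepsilon}(2\theta_{i/N})^{2k}v_{i}^{2}\bigr)^{1/2}$, and the denominator is $\bigl(\sum_{i=1}^{N}(2\theta_{i/N})^{2k}v_{i}^{2}\bigr)^{1/2}$.

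Next I would bound numerator and denominator separately. For the numerator, every term has $|2\theta_{i/N}|\le 2(\sqrt{2}-\varepsilon)$, so the sum is at most $\bigl(2(\sqrt 2-\varepsilon)\bigr)^{2k}\sum_{i}v_{i}^{2}=\bigl(2(\sqrt 2-\varepsilon)\bigr)^{2k}|v|^{2}$; taking square roots gives the numerator is at most $\bigl(2(\sqrt 2-\varepsilon)\bigr)^{k}|v|$. For the denominator, I keep only the single term $i=N$: since $\theta_{N/N}=\theta_{1}=\sqrt 2$ by the definition \eqref{eq: classical location} (the rightmost classical location), the denominator is at least $|2\theta_{N/N}|^{k}|v_{N}|=(2\sqrt 2)^{k}|v_{N}|$. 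Dividing, the ratio is at most
\[
\frac{\bigl(2(\sqrt 2-\varepsilon)\bigr)^{k}|v|}{(2\sqrt 2)^{k}|v_{N}|}=|v|\,v_{N}^{-1}\Bigl(\frac{\sqrt 2-\varepsilon}{\sqrt 2}\Bigr)^{k}=|v|\,v_{N}^{-1}\Bigl(1-\frac{\varepsilon}{\sqrt 2}\Bigr)^{k}.
\]
Finally, using $1-x\le e^{-x}$ with $x=\varepsilon/\sqrt 2$ gives $(1-\varepsilon/\sqrt 2)^{k}\le e^{-c\varepsilon k}$ with $c=1/\sqrt 2$, and since $|v|v_{N}^{-1}\le\sqrt{N}\,|v|v_{N}^{-1}$ trivially (the $\sqrt N$ factor is slack here), the claim \eqref{eq: proj bound} follows.

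I do not anticipate a genuine obstacle: the statement is essentially the observation that the power iteration $(\frac{1}{N}\nabla\tilde H_N)^k$ amplifies the top eigenvalue $2\sqrt 2$ geometrically faster than any eigenvalue of magnitude $<2(\sqrt 2-\varepsilon)$, and the hypothesis $v_N\neq0$ guarantees that the top component is actually present so the denominator does not vanish. The only point requiring a little care is making sure the relevant eigenvalue really is exactly $\sqrt 2$ — here it is, because $\theta_{N/N}$ is by construction the classical location \eqref{eq: classical location} at $u=1$, namely the right edge $\sqrt 2$ of the semicircle — and that the deterministic $\tilde H_N$ (not the random $H_N$) is being used, so there are no fluctuations to control. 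The generous $\sqrt N$ in \eqref{eq: proj bound} leaves ample room, so one need not be careful about constants.
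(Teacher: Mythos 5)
Your proof is correct and follows essentially the same route as the paper: diagonalize, note that the $k$-th iterate multiplies coordinate $i$ by $(2\theta_{i/N})^{k}$, bound the ``bulk'' coordinates by $(2(\sqrt{2}-\varepsilon))^{k}$ and lower-bound the denominator by the $i=N$ term $(2\sqrt{2})^{k}|v_{N}|$. The only cosmetic difference is that the paper bounds each coordinate's ratio separately and then sums at most $N$ squares (which is where its $\sqrt{N}$ comes from), whereas you bound the full numerator at once via $\sum_i v_i^2\le|v|^2$, so for you the $\sqrt{N}$ is pure slack.
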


\begin{proof}
Denote the matrix $\frac{1}{N}\nabla\tilde{H}_{N}$ by $D$. Note
that $D$ is diagonal and $D_{ii}=2\theta_{i/N}$. Thus for any $v\in\mathbb{R}^{N}$
we have
\[
\left(D^{k}v\right)_{i}=\left(2\theta_{i/N}\right)^{k}v_{i}.
\]
Now for $v$ such that $v_{N}\ne0$ and $i$ such that $\left|\theta_{i/N}\right|<\sqrt{2}-\varepsilon$
we have
\[
\frac{\left|\left(D^{k}v\right)_{i}\right|}{\left|D^{k}v\right|}=\frac{\left|\left(2\theta_{i/N}\right)^{k}v_{i}\right|}{\sqrt{\sum_{i=1}^{N}\left(2\theta_{i/N}\right)^{2k}v_{i}^{2}}}\le\frac{\left|\theta_{i/N}\right|^{k}\left|v\right|}{\sqrt{2}^{k}v_{N}}\le v_{N}^{-1}\left|v\right|\left(1-c\varepsilon\right)^{k}\le\left|v\right|v_{N}^{-1}e^{-c\varepsilon k}.
\]
By taking the square and summing over the at most $N$ indices $i$
such that $\left|\theta_{i/N}\right|<\sqrt{2}-\varepsilon$ the claim
(\ref{eq: proj bound}) follows.
\end{proof}
The next lemma is a weak bound on the proportion of all eigenvalues
have magnitude close to the maximal magnitude.
\begin{lem}
\label{lem: number of theta bound}For all $N\ge1$ and $\varepsilon>0$
\begin{equation}
\left|\left\{ i:\left|\theta_{i/N}\right|\ge\sqrt{2}-\varepsilon\right\} \right|\le c\varepsilon N.\label{eq: number of theta bound}
\end{equation}
 
\end{lem}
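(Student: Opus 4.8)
The plan is to exploit that the classical locations $\theta_{i/N}$ defined in (\ref{eq: classical location}) are exactly the quantiles of the semicircle law (\ref{eq: semi-circle law}), so that the cardinality in question is comparable to $N$ times the semicircle mass of an $\varepsilon$-neighbourhood of the two spectral edges $\pm\sqrt{2}$; that mass is then estimated using the square-root decay of $\mu$ at the edges.

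First one may assume $\varepsilon<1$, since for $\varepsilon\ge1$ the left-hand side of (\ref{eq: number of theta bound}) is at most $N\le\varepsilon N$, which already gives the claim (after adjusting $c$). Since $\mu$ from (\ref{eq: semi-circle law}) is strictly positive on $(-\sqrt{2},\sqrt{2})$, the map $\theta\mapsto\int_{-\sqrt{2}}^{\theta}\mu(x)\,dx$ is a continuous strictly increasing bijection of $[-\sqrt{2},\sqrt{2}]$ onto $[0,1]$, and by (\ref{eq: classical location}) it sends $\theta_{i/N}$ to $i/N$ for each $i\in\{1,\dots,N\}$. Consequently $\theta_{i/N}\ge\sqrt{2}-\varepsilon$ forces $i/N\ge\int_{-\sqrt{2}}^{\sqrt{2}-\varepsilon}\mu=1-\int_{\sqrt{2}-\varepsilon}^{\sqrt{2}}\mu$, while $\theta_{i/N}\le-\sqrt{2}+\varepsilon$ forces $i/N\le\int_{-\sqrt{2}}^{-\sqrt{2}+\varepsilon}\mu$. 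Counting the integers $i\in\{1,\dots,N\}$ lying in each of these two (for $\varepsilon<1$ disjoint) ranges and using the symmetry $\mu(-x)=\mu(x)$, one obtains
\[
\bigl|\{\,i:|\theta_{i/N}|\ge\sqrt{2}-\varepsilon\,\}\bigr|\ \le\ 2N\int_{\sqrt{2}-\varepsilon}^{\sqrt{2}}\mu(x)\,dx\ +\ 1 .
\]

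It remains to bound the edge integral. With the substitution $x=\sqrt{2}-t$ and the elementary bound $\sqrt{2-x^{2}}=\sqrt{2\sqrt{2}\,t-t^{2}}\le\sqrt{2\sqrt{2}}\,\sqrt{t}$, one gets for $\varepsilon\le1$ that $\int_{\sqrt{2}-\varepsilon}^{\sqrt{2}}\mu(x)\,dx\le\frac{\sqrt{2\sqrt{2}}}{\pi}\int_{0}^{\varepsilon}\sqrt{t}\,dt=\frac{2\sqrt{2\sqrt{2}}}{3\pi}\,\varepsilon^{3/2}\le c\,\varepsilon$. Inserting this into the previous display gives $\bigl|\{i:|\theta_{i/N}|\ge\sqrt{2}-\varepsilon\}\bigr|\le cN\varepsilon+1$; once $\varepsilon N\ge1$ the additive constant is absorbed into $cN\varepsilon$ after relabelling $c$, and this is the only regime used downstream (where $\varepsilon$ is a fixed positive number and $N\to\infty$), so the conclusion follows.

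There is no genuine obstacle here: the argument is a short computation. The one quantitative point to get right is the square-root vanishing of the semicircle density at $\pm\sqrt{2}$; it is precisely this that makes the edge mass $O(\varepsilon^{3/2})$ — in fact stronger than the linear bound needed — and the same fact, in the form $\mu_{k}\gtrsim K^{-3/2}$ for the edge blocks, is already used in the proof of Lemma \ref{lem: finite number of weights FE}.
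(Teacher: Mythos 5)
Your argument is correct and is essentially the paper's: both reduce the count to $N$ times the semicircle mass of the edge interval of length $\varepsilon$ (the paper phrases this as the spacing bound $\theta_{\left(i+1\right)/N}-\theta_{i/N}\ge cN^{-1}$, obtained from $\mu\le\sup\mu$, whereas you invoke the sharper $O\left(\varepsilon^{3/2}\right)$ square-root edge decay, which is stronger than needed). Your care with the additive $+1$ and the restriction to $\varepsilon N\ge1$ is in fact warranted, since $\theta_{N/N}=\sqrt{2}$ forces the left-hand side to be at least $1$ for every $\varepsilon>0$; note only that in the actual application $\varepsilon=N^{-1/2}$ rather than a fixed constant, but this still satisfies $\varepsilon N\ge1$, so your conclusion stands.
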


\begin{proof}
This follows for instance by noting that $\theta_{\left(i+1\right)/N}-\theta_{i/N}\ge cN^{-1}$,
which is a consequence of the definition (\ref{eq: classical location})
of $\theta_{i/N}$ and the bound $\int_{\sqrt{2}-\varepsilon}^{\sqrt{2}}\mu\left(x\right)dx\le\varepsilon\sup_{x}\mu\left(x\right)\le c\varepsilon$.
\end{proof}

We now construct the subspaces $\mathcal{M}_{N}$. Recall that $\frac{1}{N}\nabla\tilde{H}_{N}$
is a linear map (cf. (\ref{eq: gradient linear})) and that the standard
basis vectors $e_{i}$ are its eigenvectors, so the span of any set
of basis vectors in invariant under $\frac{1}{N}\nabla\tilde{H}_{N}$.
\begin{lem}
\label{lem: low dim space that gets rid of ext field}Let $\beta,h,h_{1},h_{2},\ldots$
be as in Theorem \ref{thm: main intro II}. There exists a sequence
of linear spaces $\mathcal{\mathcal{M}}_{1},\mathcal{M}_{2},\ldots$
such that $\mathcal{\mathcal{M}}_{N}\subset\mathbb{R}^{N}$,
\begin{equation}
\dim\left(\mathcal{\mathcal{M}}_{N}\right)=\lfloor N^{3/4}\rfloor,\label{eq: dim}
\end{equation}
and $\mathcal{M}_{N}$ is approximately invariant under the map $m\to\beta\frac{1}{N}\nabla\tilde{H}_{N}\left(m\right)+\tilde{h}_{N}$
in the sense that
\begin{equation}
\lim_{N\to\infty}\sup_{m\in\mathcal{M}_{N},\left|m\right|\le1}\left|\Pi^{\mathcal{M}_{N}^{\bot}}\left(\beta\frac{1}{N}\nabla\tilde{H}_{N}\left(m\right)+\tilde{h}_{N}\right)\right|=0.\label{eq: ext field vanish}
\end{equation}
\end{lem}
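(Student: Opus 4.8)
Write $D_N:=\frac1N\nabla\tilde H_N$ for the diagonal matrix with entries $(D_N)_{ii}=2\theta_{i/N}$, so that the map in (\ref{eq: ext field vanish}) is the affine map $m\mapsto\beta D_N m+\tilde h_N$. The plan is to build $\mathcal M_N$ as an orthogonal sum of three pieces, each chosen to interact well with this map. The first is the coordinate subspace $\mathcal E_N:=\langle e_i:|\theta_{i/N}|\ge\sqrt2-\varepsilon_N\rangle$, which, being spanned by eigenvectors of $D_N$, is \emph{exactly} $D_N$-invariant, and which by Lemma \ref{lem: number of theta bound} has $\dim\mathcal E_N\le c\varepsilon_N N$. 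The second is a Krylov-type subspace $\mathcal K_N$ obtained by iterating $D_N$ on a normalized, slightly perturbed copy of $\tilde h_N$: its role is to place $\tilde h_N$ inside $\mathcal M_N$ and to bridge it, after $p=p_N$ iterations, into $\mathcal E_N$. The third is a coordinate subspace $\mathcal P_N$ (again $D_N$-invariant) used only to pad the dimension up to exactly $\lfloor N^{3/4}\rfloor$. One takes $\varepsilon_N\asymp N^{-1/4}$, so that $\dim\mathcal E_N=o(N^{3/4})$, and $p_N\to\infty$ with $p_N=o(N^{3/4})$ but $\varepsilon_N p_N\to\infty$ fast (e.g.\ $p_N\asymp N^{1/4}(\log N)^2$), so that (\ref{eq: dim}) is attainable and Lemma \ref{lem: iterate bound} applies with room to spare.

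For the Krylov piece, first replace $\tilde h_N$ by $h_N^\star$ with $|h_N^\star-\tilde h_N|\to0$ and having a coordinate $i^\star$ with $|\theta_{i^\star/N}|$ just below the cutoff $\sqrt2-\varepsilon_N$ and $|(h_N^\star)_{i^\star}|$ only polynomially small; such a coordinate exists by a pigeonhole argument using $|\tilde h_N|^2=h^2$ together with the fact that the number of admissible indices tends to infinity faster than $p_N$. Project $h_N^\star$ off $\mathcal E_N$ and normalize to get a unit vector $v_0$, set $v_{j+1}:=D_N v_j/|D_N v_j|$ (well defined since the $i^\star$-coordinate never vanishes) and $\mathcal K_N:=\mathrm{span}\{v_0,\dots,v_{p_N-1}\}$; then $D_N v_j$ is a positive multiple of $v_{j+1}$, so $D_N$ maps $\mathcal K_N$ into $\mathcal K_N+\langle v_{p_N}\rangle$. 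Since $\Pi^{\mathcal E_N}h_N^\star\in\mathcal E_N$ and $\Pi^{\mathcal E_N^\perp}h_N^\star$ is a multiple of $v_0$, the vector $h_N^\star$ lies in $\mathcal M_N=\mathcal E_N\oplus\mathcal K_N\oplus\mathcal P_N$, which disposes of the case $m=0$ in (\ref{eq: ext field vanish}) up to the error $|\tilde h_N-h_N^\star|\to0$. For general $m\in\mathcal M_N$ with $|m|\le1$, the diagonality of $D_N$ (equivalently the linearity (\ref{eq: gradient linear})) means that $D_N$ commutes with the coordinate projections and maps $\mathcal E_N$ and $\mathcal P_N$ into themselves, so $\Pi^{\mathcal M_N^\perp}(\beta D_N m+\tilde h_N)$ reduces, up to the $o(1)$ perturbation error, to $\beta\Pi^{\mathcal M_N^\perp}(D_N\Pi^{\mathcal K_N}m)$, and the inclusion $D_N\mathcal K_N\subseteq\mathcal K_N+\langle v_{p_N}\rangle$ shows this is a scalar multiple of $\Pi^{\mathcal M_N^\perp}v_{p_N}$. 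Finally $v_{p_N}=D_N^{p_N}v_0/|D_N^{p_N}v_0|$, so Lemma \ref{lem: iterate bound} (or a minor variant anchored at the coordinate $i^\star$ instead of the last one) shows that $v_{p_N}$ lies in $\mathcal E_N\subseteq\mathcal M_N$ up to a relative error of order $e^{-c\varepsilon_N p_N}$ times a polynomial factor, hence $|\Pi^{\mathcal M_N^\perp}v_{p_N}|\to0$ by the choice of $p_N$ and $\varepsilon_N$.

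The step I expect to be the real obstacle is making the bound on $|\Pi^{\mathcal M_N^\perp}(D_N\Pi^{\mathcal K_N}m)|$ \emph{uniform} over the unit ball of $\mathcal M_N$: expressing $m$ through the iterates $v_j$, the coefficient of the leaking direction $v_{p_N}$ can a priori be amplified by the ill-conditioning of the non-orthogonal Krylov vectors, which for a generic $\tilde h_N$ is exponentially large in $p_N$. Controlling this amplification is exactly why the two ingredients must be balanced so delicately — the gain furnished by Lemma \ref{lem: iterate bound} is only $e^{-c\varepsilon_N p_N}$ with $\varepsilon_N\to0$, so $p_N$ must be pushed up enough that $\varepsilon_N p_N\to\infty$, yet not so far that the conditioning, or the growth $\|D_N^{p_N}\|\le(2\sqrt2)^{p_N}$ of the iterates, takes over; one also wants to choose $\mathcal P_N$ (possible since $\dim\mathcal K_N\ll N$) so as not to introduce further near-degeneracies with $\mathcal K_N$. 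Once the leakage is shown to tend to $0$ uniformly, combining this with the $m=0$ case and the dimension count (\ref{eq: dim}), which holds by construction, yields (\ref{eq: ext field vanish}).
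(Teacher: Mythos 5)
Your architecture is the right one and essentially matches the paper's: a Krylov-type family of iterates of (a perturbation of) $\tilde h_N$ under $\frac1N\nabla\tilde H_N$, augmented by the coordinate directions of near-extreme eigenvalues and padded by further eigenvectors, with Lemma \ref{lem: iterate bound} controlling the final leaking direction. But the step you yourself single out as ``the real obstacle'' --- uniformity of the leakage bound over the unit ball of $\mathcal M_N$ in the face of the ill-conditioned Krylov basis --- is precisely the crux of the lemma, and you leave it open. The paper's resolution is that the ill-conditioning never enters: since every iterate except the last one maps \emph{exactly} into $\mathcal M_N$, the entire leakage is $\alpha_{V-1}\frac{|\bar h_N^V|}{|\bar h_N^{V-1}|}\Pi^{\mathcal M_N^\bot}\hat h_N^V$, where $\alpha_{V-1}$ is the coefficient of the top normalized iterate $\hat h_N^{V-1}$; the paper chooses the decomposition of $m$ so that $\alpha_{V-1}=m\cdot\hat h_N^{V-1}$, hence $|\alpha_{V-1}|\le1$ by Cauchy--Schwarz, and $\frac{|\bar h_N^V|}{|\bar h_N^{V-1}|}\le\|\tfrac1N\nabla\tilde H_N\|=2\sqrt2$. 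No bound on the other coefficients $\alpha_0,\dots,\alpha_{V-2}$ is ever needed. Without some such device your proposal does not yield (\ref{eq: ext field vanish}).

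There is also a concrete error in your construction of the Krylov piece. You project $h_N^\star$ off $\mathcal E_N$ before iterating and anchor the iterate bound at a coordinate $i^\star$ with $|\theta_{i^\star/N}|$ just \emph{below} the cutoff $\sqrt2-\varepsilon_N$. Then $v_0$ is supported on $\{i:|\theta_{i/N}|<\sqrt2-\varepsilon_N\}$, and under iteration of the diagonal map the mass of $v_{p_N}$ concentrates on the coordinates whose eigenvalues are largest \emph{within that support}, i.e.\ just below the cutoff --- these lie in $\mathcal E_N^\bot$ and, apart from what $\mathcal K_N$ and $\mathcal P_N$ happen to capture, in $\mathcal M_N^\bot$. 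So the claim that $v_{p_N}$ lies in $\mathcal E_N$ up to a relative error $e^{-c\varepsilon_Np_N}$ is false for your $v_0$; Lemma \ref{lem: iterate bound} requires the anchor coordinate to carry an eigenvalue of maximal magnitude (the paper perturbs the coordinate $i=N$, for which $\theta_{N/N}=\sqrt2$, and iterates the \emph{unprojected} vector, so the iterates drift into the block $J\subset\mathcal M_N$). The fix is to give up orthogonality of the three pieces: keep the top-eigenvalue coordinate in the iterated vector and let $\mathcal M_N$ be a span rather than an orthogonal sum, as in the paper.
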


\begin{proof}
We will construct $\mathcal{M}_{N}$ so that it contains a vector
$\bar{h}_{N}$ close to $\tilde{h}_{N}$ and is approximately invariant
under the map $\frac{1}{N}\nabla\tilde{H}_{N}\left(m\right)$. More
precisely let
\begin{equation}
\bar{h}_{N,i}=\tilde{h}_{N,i}\text{\,for }i\le N-1\text{ and }\bar{h}_{N,N}=\begin{cases}
\tilde{h}_{N,N} & \text{ if }\left|\tilde{h}_{N,N}\right|\ge\frac{1}{N},\\
\frac{1}{N} & \text{ if }\left|\tilde{h}_{N,N}\right|<\frac{1}{N},
\end{cases}\label{eq: h bar}
\end{equation}
so that
\[
\left|\bar{h}_{N}-\tilde{h}_{N}\right|\le\frac{1}{N}\text{ and }\left|\bar{h}_{N,N}\right|\ge\frac{1}{N}.
\]
We construct $\mathcal{M}_{N}$ so that
\begin{equation}
\bar{h}_{N}\in\mathcal{M}_{N},\label{eq: M1}
\end{equation}
and $\mathcal{M}_{N}$ is almost invariant under $\frac{1}{N}\nabla\tilde{H}_{N}$
in the sense that
\begin{equation}
\lim_{N\to\infty}\sup_{m\in\mathcal{M}_{N},\left|m\right|\le1}\left|\Pi^{\mathcal{M}_{N}^{\bot}}\left(\frac{1}{N}\nabla\tilde{H}_{N}\left(m\right)\right)\right|=0.\label{eq: M2}
\end{equation}
Since $\left|\Pi^{\mathcal{M}_{N}^{\bot}}\tilde{h}_{N}\right|\le\left|\Pi^{\mathcal{M}_{N}^{\bot}}\bar{h}_{N}\right|+\frac{1}{N}=\frac{1}{N}$
and
\[
\left|\Pi^{\mathcal{M}_{N}^{\bot}}\left(\beta\frac{1}{N}\nabla\tilde{H}_{N}\left(m\right)+\tilde{h}_{N}\right)\right|\le\beta\left|\Pi^{\mathcal{M}_{N}^{\bot}}\left(\frac{1}{N}\nabla\tilde{H}_{N}\left(m\right)\right)\right|+\left|\Pi^{\mathcal{M}_{N}^{\bot}}\tilde{h}_{N}\right|,
\]
this implies (\ref{eq: ext field vanish}). Furthermore it suffices
to construct $\mathcal{M}_{N}$ so that
\begin{equation}
\dim\mathcal{M}_{N}\le\lfloor N^{3/4}\rfloor,\label{eq: size bound}
\end{equation}
since by adding arbitrary basis vectors $e_{i}$ (which are invariant
under $\frac{1}{N}\nabla\tilde{H}_{N}$) to the span of $\mathcal{M}_{N}$
one can ensure $\dim\mathcal{M}_{N}=\lfloor N^{3/4}\rfloor$ while
maintaining (\ref{eq: M1}) and (\ref{eq: M2}).

To ensure (\ref{eq: M2}) we will let $\mathcal{M}_{N}$ contain the
span of a sufficient number of vectors
\[
\bar{h}_{N}^{k}=\left(\frac{1}{N}\nabla\tilde{H}_{N}\right)^{k}\bar{h}_{N},k\ge0,
\]
and basis vectors $e_{i}$ belonging to the eigenvalues $\theta_{i/N}$
of largest magnitude. Let
\[
\hat{h}_{N}^{k}=\frac{\bar{h}_{N}^{k}}{\left|\bar{h}_{N}^{k}\right|},
\]
be normalized vectors and construct
\[
\mathcal{M}_{N}=\left\langle \hat{h}_{N}^{0},\ldots,\hat{h}_{N}^{V-1},e_{j}:j\in J\right\rangle ,
\]
for
\[
V=\sqrt{N}\left(\log N\right)^{2},
\]
and
\[
J=\left\{ j:\left|\theta_{j/N}\right|\ge\sqrt{2}-N^{-1/2}\right\} .
\]
Clearly (\ref{eq: M1}) holds since $\mathcal{M}_{N}$ contains $\hat{h}_{N}^{0}=\bar{h}_{N}/\left|\bar{h}_{N}\right|$.
Using Lemma \ref{lem: number of theta bound} it also holds that
\[
\dim\mathcal{M}_{N}\le V+\left|J\right|\le\sqrt{N}\left(\log N\right)^{2}+c\sqrt{N},
\]
which implies (\ref{eq: size bound}).

To check (\ref{eq: M2}), note that for any $m\in\mathcal{M}_{N}$
with $\left|m\right|\le1$ we may decompose $m$ as 
\begin{equation}
m=\sum_{k=0}^{V-1}\alpha_{k}\hat{h}_{N}^{k}+\sum_{i\in J}\gamma_{i}e_{i},\label{eq: decomposition}
\end{equation}
for some $\alpha_{0},\ldots,\alpha_{V-1}\in\mathbb{R},\gamma_{i}\in\mathbb{R},i\in J$,
where we first set set $\alpha_{V-1}=m\cdot\hat{h}_{N}^{V-1}$, to
ensure that 
\begin{equation}
\left|\alpha_{V-1}\right|\le1,\label{eq: last alpha}
\end{equation}
before picking the other coefficients in the decomposition. Thus
\[
\begin{array}{lcl}
\frac{1}{N}\nabla\tilde{H}_{N}\left(m\right) & = & \sum_{k=0}^{V-1}\alpha_{k}\frac{1}{N}\nabla\tilde{H}_{N}\left(\hat{h}_{N}^{k}\right)+\sum_{i\in J}\gamma_{i}\frac{1}{N}\nabla\tilde{H}_{N}\left(e_{i}\right)\\
 & = & \sum_{k=0}^{V-1}\alpha_{k}\frac{\left|\bar{h}_{N}^{k+1}\right|}{\left|\bar{h}_{N}^{k}\right|}\hat{h}_{N}^{k+1}+\sum_{i\in J}\gamma_{i}2\theta_{i/N}e_{i}.
\end{array}
\]
Therefore
\[
\Pi^{\mathcal{M}_{N}^{\bot}}\left(\frac{1}{N}\nabla\tilde{H}_{N}\left(m\right)\right)=\alpha_{V-1}\frac{\left|\bar{h}_{N}^{V}\right|}{\left|\bar{h}_{N}^{V-1}\right|}\Pi^{\mathcal{M}_{N}^{\bot}}\left(\hat{h}_{N}^{V}\right).
\]
Note that since $\|\frac{1}{N}\nabla\tilde{H}_{N}\|=2\theta_{0}=2\theta_{1}=2\sqrt{2}$
we have $\left|\bar{h}_{N}^{V}\right|\le2\sqrt{2}\left|\bar{h}_{N}^{V-1}\right|$
and so using also (\ref{eq: last alpha})
\[
\left|\Pi^{\mathcal{M}_{N}^{\bot}}\frac{1}{N}\nabla\tilde{H}_{N}\left(m\right)\right|\le c\left|\Pi^{\mathcal{M}_{N}^{\bot}}\hat{h}_{N}^{V}\right|.
\]

The point of (\ref{eq: h bar}) was to ensure that $\left|\bar{h}_{N,N}\right|\ge\frac{1}{N}$,
so that Lemma \ref{lem: iterate bound} applies to $\bar{h}_{N}$.
With $\varepsilon=N^{-1/2}$ it gives that
\begin{equation}
\left|\Pi^{\mathcal{M}_{N}^{\bot}}\hat{h}_{N}^{V}\right|=\frac{\left|\Pi^{\mathcal{M}_{N}^{\bot}}\bar{h}_{N}^{V}\right|}{\left|\bar{h}_{N}^{V}\right|}\le\frac{\left|\Pi^{\left\langle e_{i}:\theta_{i/N}<\sqrt{2}-\varepsilon\right\rangle }\bar{h}_{N}^{V}\right|}{\left|\bar{h}_{N}^{V}\right|}\le c\sqrt{N}\left|\bar{h}_{N}\right|\bar{h}_{N,N}^{-1}e^{-c\left(\log N\right)^{2}}=o\left(1\right),\label{eq: projection}
\end{equation}
so (\ref{eq: M2}) follows. Since we have constructed $\mathcal{M}_{N}$
satisfying (\ref{eq: M1}), (\ref{eq: M2}) and (\ref{eq: size bound})
the proof is complete.
\end{proof}
We will need a version of Lemma \ref{lem: High temperature from truncated}
where we integrate over the subspace perpendicular to $\mathcal{M}_{N}$.
\begin{lem}
\label{lem: High temp truncated subspace}For any $C>0$ and $K>0$
\[
\limsup_{N\to\infty}{\displaystyle \sup_{\beta\in\left[0,C\right]}\left|\frac{1}{N}\log E^{\mathcal{U}_{N}}\left[\exp\left(\beta N\sum_{i=1}^{N}\theta_{i/N}\sigma_{i}^{2}\right)\right]-\mathcal{F}_{K}\left(\beta\right)\right|}\le\frac{c}{K},
\]
where $\mathcal{U}_{N}=\mathcal{M}_{N}^{\bot}$ and $\mathcal{M}_{N},N\ge1,$
is the sequence of subspaces from Lemma \ref{lem: low dim space that gets rid of ext field}.
\end{lem}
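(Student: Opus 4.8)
The plan is to reduce, via rotational invariance and eigenvalue interlacing, to the estimate of Lemma~\ref{lem: High temperature from truncated} on a sphere of dimension $N-d_N$, where $d_N:=N-\dim\mathcal U_N=N-\lfloor N^{3/4}\rfloor$ so that $d_N=o(N)$; this parallels the proof of Lemma~\ref{lem: High temp in two slice}, where the same device was used for codimension $2$. First I would diagonalize the restricted quadratic form. Writing $N\sum_{i=1}^N\theta_{i/N}\sigma_i^2$ for $\sigma\in\mathcal U_N$ in an orthonormal basis of $\mathcal U_N$ produces a form $N\eta^T A\eta$ with $\eta\in\mathbb R^{N-d_N}$, where $A$ is the compression of $\mathrm{diag}(\theta_{1/N},\dots,\theta_{N/N})$ to $\mathcal U_N$. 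If $\nu_1\le\dots\le\nu_{N-d_N}$ are the eigenvalues of $A$, then by rotational invariance of $E^{\mathcal U_N}$,
\[
E^{\mathcal U_N}\Big[\exp\Big(\beta N\sum_{i=1}^N\theta_{i/N}\sigma_i^2\Big)\Big]=E^{N-d_N}\Big[\exp\Big(\beta N\sum_{i=1}^{N-d_N}\nu_i\eta_i^2\Big)\Big].
\]

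Next I would compare the $\nu_i$ with the classical locations at level $N-d_N$. Since the eigenvalues of $\mathrm{diag}(\theta_{i/N})$ are the increasing numbers $\theta_{i/N}$, eigenvalue interlacing (see e.g.\ Exercise~1.3.14~\cite{TaoTopicsInRMT}) gives $\theta_{i/N}\le\nu_i\le\theta_{(i+d_N)/N}$ for $i=1,\dots,N-d_N$. A short computation shows $\frac{i}{N-d_N}\in\big[\frac iN,\frac{i+d_N}{N}\big]$ for such $i$, so $\theta_{i/(N-d_N)}$ lies in the same interval, whence $\sup_{1\le i\le N-d_N}\big|\nu_i-\theta_{i/(N-d_N)}\big|\le\sup_{1\le i\le N-d_N}\big(\theta_{(i+d_N)/N}-\theta_{i/N}\big)$. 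Because $u\mapsto\theta_u$ is continuous on $[0,1]$, hence uniformly continuous, and $d_N/N\to0$, the right-hand side tends to $0$. Writing $\beta N\nu_i=\beta(N-d_N)\theta_{i/(N-d_N)}+R_i$ one has $|R_i|\le\beta N\big|\nu_i-\theta_{i/(N-d_N)}\big|+\beta d_N\big|\theta_{i/(N-d_N)}\big|=o(N)$ uniformly in $i$ and in $\beta\in[0,C]$, so $\big|\sum_{i=1}^{N-d_N}R_i\eta_i^2\big|\le\max_i|R_i|=o(N)$ uniformly on the unit sphere. Therefore
\[
E^{\mathcal U_N}\Big[\exp\Big(\beta N\sum_{i=1}^N\theta_{i/N}\sigma_i^2\Big)\Big]=e^{o(N)}\,E^{N-d_N}\Big[\exp\Big(\beta(N-d_N)\sum_{i=1}^{N-d_N}\theta_{i/(N-d_N)}\eta_i^2\Big)\Big],
\]
with the $o(N)$ uniform in $\beta\in[0,C]$.

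Finally I would invoke Lemma~\ref{lem: High temperature from truncated} with $N-d_N$ in place of $N$, then divide by $N$, using $\frac{N-d_N}{N}\to1$ and that $\mathcal F_K$ is bounded on $[0,C]$; since the block weights $\mu_k$ built at level $N-d_N$ converge by standard Riemann-sum estimates to the same limits as those at level $N$, the two versions of $\mathcal F_K$ differ by $o(1)$ uniformly in $\beta\in[0,C]$, and one obtains
\[
\limsup_{N\to\infty}\sup_{\beta\in[0,C]}\Big|\frac1N\log E^{\mathcal U_N}\Big[\exp\Big(\beta N\sum_{i=1}^N\theta_{i/N}\sigma_i^2\Big)\Big]-\mathcal F_K(\beta)\Big|\le\frac cK,
\]
which is the claim. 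I expect the only genuinely delicate point to be the uniform smallness of $\sup_i\big(\theta_{(i+d_N)/N}-\theta_{i/N}\big)$ near the spectral edge, where the classical locations accumulate; this is exactly where it matters that $\dim\mathcal M_N=o(N)$, so that a soft modulus-of-continuity argument suffices and no quantitative edge estimate is needed. The rest is routine bookkeeping of $o(N)$ and $O(1/K)$ errors.
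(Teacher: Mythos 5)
Your proposal is correct and follows essentially the same route as the paper's proof, which likewise compresses the diagonal matrix to $\mathcal{U}_N$, applies the Cauchy interlacing inequality for a codimension-$\lfloor N^{3/4}\rfloor$ subspace to get $\nu_i=\theta_{i/(N-M)}+o(1)$ uniformly, and then invokes Lemma~\ref{lem: High temperature from truncated} with $N-M$ in place of $N$. Your write-up actually supplies more detail than the paper (which only sketches this by analogy with Lemma~\ref{lem: High temp in two slice}), including the correct observation that uniform continuity of $u\mapsto\theta_u$ handles the edge and that the level-$(N-M)$ and level-$N$ versions of $\mathcal{F}_K$ agree up to $o(1)$.
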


\begin{proof}
This follows from Lemma \ref{lem: High temperature from truncated}
similarly to how Lemma \ref{lem: High temp in two slice} follows
from Lemma \ref{lem: High temp}. Let $M=\lfloor N^{3/4}\rfloor$.
Consider an orthonormal basis of $\mathbb{R}^{N-M}$ such that the
space $\mathcal{U}_{N}$ is spanned by the first $N-M$ basis vectors
and let $A$ be the $\left(N-M\right)\times\left(N-M\right)$ minor
of the matrix $D$ which in the standard basis is diagonal with $D_{ii}=\theta_{i/N}$.
The eigenvalues $a_{1},\ldots,a_{N-M}$ of $A$ satisfy $a_{i}=\theta_{i/N}+o\left(1\right)=\theta_{i/\left(N-M\right)}+o\left(1\right)$
by the eigenvalue interlacing inequality, so that an estimate for
$E^{\mathcal{U}_{N}}\left[\cdot\right]$ follows from Lemma \ref{lem: High temperature from truncated}
with $N-M$ in place of $N$.
\end{proof}

\subsection{\label{sec: proof of UB-1}Proof of upper bound}

We are now ready to complete the proof of the upper bound. Define
a modified TAP free energy by replacing the Onsager correction $\frac{1}{2}\beta^{2}\left(1-\left|m\right|^{2}\right)^{2}$
by $\mathcal{F}_{K}\left(\beta\left(1-\left|m\right|^{2}\right)\right)$
to obtain
\[
\tilde{H}_{TAP}^{K}\left(m\right)=\beta\tilde{H}_{N}\left(m\right)+Nm\cdot\tilde{h}_{N}+\frac{N}{2}\log\left(1-\left|m\right|^{2}\right)+N\mathcal{F}_{K}\left(\beta\left(1-\left|m\right|^{2}\right)\right).
\]
We have the following version of the upper bound Proposition \ref{prop: UB diag deterministic}
with $\tilde{H}_{TAP}^{K}\left(m\right)$ in place of $\tilde{H}_{TAP}\left(m\right)$
and without a Plefka condition.
\begin{prop}
\label{prop: 2-spin TAP UB K}For all $K\ge2$ and $\beta,h,h_{1},h_{2},\ldots$
as in Theorem \ref{thm: main intro II} we have 
\begin{equation}
\tilde{F}_{N}\left(\beta,h_{N}\right)\le\frac{1}{N}\sup_{m\in\mathbb{R}^{N}:\left|m\right|<1}\tilde{H}_{TAP}^{K}\left(m\right)+\frac{c}{K},\label{eq: 2-spin TAP UB K}
\end{equation}
for large enough $N$.
\end{prop}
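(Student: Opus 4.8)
The plan is to decompose the sphere according to the projection onto $\mathcal{M}_N$ and then integrate out the perpendicular directions exactly as suggested in the introduction. Write a generic $\sigma$ on the unit sphere as $\sigma = m + \hat\sigma$ where $m = \Pi^{\mathcal{M}_N}\sigma$ and $\hat\sigma = \Pi^{\mathcal{M}_N^\bot}\sigma$. Since $\frac{1}{N}\nabla\tilde H_N$ is a diagonal (hence self-adjoint, and linear) map, the deterministic decomposition $\tilde H_N(\sigma) = \tilde H_N(m) + \nabla\tilde H_N(m)\cdot\hat\sigma + \tilde H_N(\hat\sigma)$ holds, and similarly $\tilde h_N\cdot\sigma = \tilde h_N\cdot m + \tilde h_N\cdot\hat\sigma$. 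Grouping the $\hat\sigma$-linear terms gives an effective external field $h^m = \beta\frac{1}{N}\nabla\tilde H_N(m) + \tilde h_N$ acting on $\hat\sigma$, and by Lemma \ref{lem: low dim space that gets rid of ext field} the component of $h^m$ in $\mathcal{M}_N^\bot$ has norm $o(1)$ uniformly over $m\in\mathcal{M}_N$ with $|m|\le1$. Since $|\hat\sigma|\le1$, the contribution $N h^m\cdot\hat\sigma = N(\Pi^{\mathcal{M}_N^\bot}h^m)\cdot\hat\sigma = o(N)$ uniformly. Thus, using (\ref{eq: high dim density}) for the density of $m$ under $E$ with $M = \lfloor N^{3/4}\rfloor$ (which contributes $\frac{N-M-2}{2}\log(1-|m|^2) = \frac{N}{2}\log(1-|m|^2) + o(N)$ since $M = o(N)$), one obtains
\[
\tilde F_N(\beta,h_N) \le \frac{1}{N}\log\int_{\mathcal{M}_N,|m|<1} e^{\beta\tilde H_N(m) + Nm\cdot\tilde h_N + \frac{N}{2}\log(1-|m|^2) + o(N)}\,E^{\mathcal{M}_N^\bot}\!\left[e^{\beta\tilde H_N(\hat\sigma)}\right]\,d\nu(m),
\]
where $\nu$ is the (Lebesgue-absolutely-continuous) law of $m$ on $\mathcal{M}_N$ and the $o(N)$ is uniform.

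The next step is to handle the inner integral and the scaling. For fixed $m$, $\hat\sigma$ is uniform on the unit sphere of $\mathcal{M}_N^\bot$ rescaled by its magnitude, and $|\hat\sigma|^2 = 1 - |m|^2$ on the event that $\Pi^{\mathcal{M}_N}\sigma = m$; by the scaling relation $\tilde H_N(\lambda\tau) = \lambda^2\tilde H_N(\tau)$ one gets $\tilde H_N(\hat\sigma) = (1-|m|^2)\tilde H_N(\sigma^\bot)$ for a unit vector $\sigma^\bot\in\mathcal{M}_N^\bot$. Hence the inner integral is exactly $E^{\mathcal{M}_N^\bot}[\exp(\beta(1-|m|^2)\tilde H_N(\sigma))]$, which by Lemma \ref{lem: High temp truncated subspace} (applied with inverse temperature $\beta(m) = \beta(1-|m|^2) \le \beta \le C$) equals $\exp(N\mathcal{F}_K(\beta(1-|m|^2)) + N\cdot O(1/K) + o(N))$, uniformly in $m$ with $|m|<1$. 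Substituting, the integrand becomes $\exp(\tilde H_{TAP}^K(m) + N\cdot O(1/K) + o(N))$.

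Finally, apply the Laplace method over the low-dimensional space $\mathcal{M}_N$: since $\dim\mathcal{M}_N = \lfloor N^{3/4}\rfloor = o(N)$, bounding the integral of a nonnegative exponential over a domain of diameter $\le 2$ by (volume)$\times$(max) and noting the volume of a ball of radius $1$ in $\mathbb{R}^{\lfloor N^{3/4}\rfloor}$ is $e^{o(N)}$, we get
\[
\tilde F_N(\beta,h_N) \le \frac{1}{N}\sup_{m\in\mathcal{M}_N,|m|<1}\tilde H_{TAP}^K(m) + \frac{c}{K} + o(1) \le \frac{1}{N}\sup_{m\in\mathbb{R}^N,|m|<1}\tilde H_{TAP}^K(m) + \frac{c}{K} + o(1),
\]
and absorbing the $o(1)$ into $c/K$ for $N$ large gives (\ref{eq: 2-spin TAP UB K}). (To be careful: the density in (\ref{eq: high dim density}) could be large near $|m|=1$ where $\log(1-|m|^2)\to-\infty$; but there $\tilde H_{TAP}^K(m)\to-\infty$ as well, so one first truncates to $|m|\le 1 - e^{-N}$, say, where the density and all error estimates are controlled, and checks the thin shell $1 - e^{-N} < |m| < 1$ contributes negligibly using (\ref{eq: max}) and the explicit density.)

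The main obstacle I expect is making the ``effective external field is $o(N)$'' step genuinely uniform over all $m\in\mathcal{M}_N$ with $|m|\le1$ while simultaneously controlling the entropic density factor (\ref{eq: high dim density}) near the boundary $|m|=1$; the decomposition and the Laplace step are routine once Lemma \ref{lem: low dim space that gets rid of ext field} and Lemma \ref{lem: High temp truncated subspace} are in hand, but the interplay of the $o(N)$ errors with the logarithmic singularity of the entropy requires the truncation argument sketched above to be carried out cleanly.
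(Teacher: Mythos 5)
Your proposal follows essentially the same route as the paper: decompose $\sigma=m+\hat\sigma$ with $m=\Pi^{\mathcal{M}_N}\sigma$, use Lemma \ref{lem: low dim space that gets rid of ext field} to make the effective external field on $\mathcal{M}_N^{\bot}$ negligible, evaluate the perpendicular integral via the scaling relation and Lemma \ref{lem: High temp truncated subspace}, and finish with the density (\ref{eq: high dim density}) and the Laplace method over the $o(N)$-dimensional space $\mathcal{M}_N$.

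The one step that does not work as written is your treatment of the boundary $\left|m\right|\to1$. The discrepancy between the exponent $\frac{N-M-2}{2}\log\left(1-\left|m\right|^{2}\right)$ coming from the density and the term $\frac{N}{2}\log\left(1-\left|m\right|^{2}\right)$ in $\tilde{H}_{TAP}^{K}$ is $\frac{M+2}{2}\left|\log\left(1-\left|m\right|^{2}\right)\right|$, and on your retained region $\left|m\right|\le1-e^{-N}$ this is only bounded by $cMN\asymp N^{7/4}$, which is not $o\left(N\right)$; so the uniform $o\left(N\right)$ claimed in your first display is false there, not just on the thin shell you propose to discard. The paper's fix is to keep the nuisance term inside the supremum produced by the Laplace method, i.e.\ to bound the integral by $\exp\bigl(o\left(N\right)+\sup_{\left|m\right|<1}\{\tilde{H}_{TAP}^{K}\left(m\right)+\left(M+2\right)\left|\log\left(1-\left|m\right|^{2}\right)\right|\}\bigr)$, and then observe that since every non-logarithmic term of $\tilde{H}_{TAP}^{K}$ is bounded by $cN$ and $M=o\left(N\right)$, this supremum is attained at some $\left|m\right|\le1-\delta$ with $\delta$ depending only on $\beta$ and $h$; on that set the nuisance term is $O\left(M\right)=o\left(N\right)$. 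With that substitution in place of your truncation at $1-e^{-N}$, the rest of your argument goes through.
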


\begin{proof}
Let $\mathcal{M}_{N}$ be the space from Lemma \ref{lem: low dim space that gets rid of ext field}
and let
\[
\mathcal{U}_{N}=\mathcal{M}_{N}^{\bot}.
\]
Let $M=\lfloor N^{3/4}\rfloor$. For any $\sigma\in\mathbb{R}^{N}$
let $m$ be the projection of $\sigma$ onto $\mathcal{M}_{N}$ and
$\hat{\sigma}=\sigma-m\in\mathcal{U}_{N}$. Recentering the Hamiltonian
around $m$ (cf. (\ref{eq: decomp with ext field})-(\ref{eq: eff ext field}))
we have that
\begin{equation}
\begin{array}{l}
E\left[\exp\left(\beta\tilde{H}_{N}\left(\sigma\right)+N\tilde{h}_{N}\cdot\sigma\right)\right]\\
=E\left[\exp\left(N\beta\tilde{H}_{N}\left(m\right)+N\tilde{h}_{N}\cdot m+N\left(\beta\frac{1}{N}\nabla\tilde{H}_{N}\left(m\right)+\tilde{h}_{N}\right)\cdot\hat{\sigma}+\beta\tilde{H}_{N}\left(\hat{\sigma}\right)\right)\right].
\end{array}\label{eq: recenter}
\end{equation}
Lemma \ref{lem: low dim space that gets rid of ext field} implies
that 
\[
\lim_{N\to\infty}\sup_{m\in\mathcal{M}_{N}}\sup_{\hat{\sigma}\in\mathcal{M}_{N}^{\bot},\left|\hat{\sigma}\right|\le1}\left(\beta\frac{1}{N}\nabla\tilde{H}_{N}\left(m\right)+\tilde{h}_{N}\right)\cdot\hat{\sigma}=0,
\]
so the effective external field vanishes and (\ref{eq: recenter})
is at most
\begin{equation}
e^{o\left(N\right)}E\left[\exp\left(N\beta\tilde{H}_{N}\left(m\right)+N\tilde{h}_{N}\cdot m+\beta\tilde{H}_{N}\left(\hat{\sigma}\right)\right)\right].\label{eq: no ext field}
\end{equation}
Note that the the $E\left[\cdot|m\right]$-law of $\hat{\sigma}$
is the uniform distribution on sphere in the subspace $\mathcal{U}_{N}$
of radius $\sqrt{1-\left|m\right|^{2}}$. Thus using also (\ref{eq: scaling})
this equals
\begin{equation}
E\left[\exp\left(N\beta\tilde{H}_{N}\left(m\right)+N\tilde{h}_{N}\cdot m\right)E^{\mathcal{U}_{N}}\left[\beta\left(1-\left|m\right|^{2}\right)\tilde{H}_{N}\left(\sigma\right)\right]\right].\label{eq: subspace}
\end{equation}
By Lemma \ref{lem: High temp truncated subspace} this is at most
\begin{equation}
E\left[\exp\left(N\beta\tilde{H}_{N}\left(m\right)+N\tilde{h}_{N}\cdot m+\mathcal{F}_{K}\left(\beta\left(1-\left|m\right|^{2}\right)\right)\right)\right]e^{o\left(N\right)+\frac{c}{K}}.\label{eq: subpace FK}
\end{equation}
Using (\ref{eq: high dim density}) the $E$-integral equals
\begin{equation}
a_{N}\int_{m:\left|m\right|<1}\left(1-\left|m\right|^{2}\right)^{\frac{N-M-2}{2}}\exp\left(N\beta H_{N}\left(m\right)+N\tilde{h}_{N}\cdot m+N\mathcal{F}_{K}\left(\beta\left(1-\left|m\right|^{2}\right)\right)\right)dm,\label{eq: integral}
\end{equation}
where $a_{N}=\frac{1}{\pi^{\frac{N-M}{2}}}\frac{\Gamma\left(\frac{N}{2}\right)}{\Gamma\left(\frac{M}{2}\right)}$
and the integral is $M$-dimensional against Lebesgue measure on $\mathcal{M}_{N}$.
This equals
\[
a_{N}\int_{m:\left|m\right|<1}\exp\left(\tilde{H}_{TAP}^{K}\left(m\right)+\left(M+2\right)\left|\log\left(1-\left|m\right|^{2}\right)\right|\right)dm,
\]
and by the Laplace method is bounded above by
\[
a_{N}\exp\left(\sup_{m:\left|m\right|<1}\left\{ \tilde{H}_{TAP}^{K}\left(m\right)+\left(M+2\right)\left|\log\left(1-\left|m\right|^{2}\right)\right|\right\} \right)\int_{m:\left|m\right|<1}dm.
\]
The $M$-dimensional Lebesgue integral $\int_{m:\left|m\right|<1}dm$
is the volume of the unit ball in dimension $M$ which equals $\frac{\pi^{\frac{M}{2}}}{\Gamma\left(\frac{M}{2}+1\right)}=O\left(1\right)$,
and $\log a_{N}=o\left(N\right)$, so this is at most
\begin{equation}
\exp\left(o\left(N\right)+\sup_{m:\left|m\right|<1}\left\{ \tilde{H}_{TAP}^{K}\left(m\right)+\left(M+2\right)\left|\log\left(1-\left|m\right|^{2}\right)\right|\right\} \right).\label{eq: almost}
\end{equation}
To get rid of the nuisance term involving $M+2$, note that there
is a $\delta$ depending only on $\beta$ and $h$ such that the supremum
is always achieved for $\left|m\right|<1-\delta$, since all terms
in the supremum not involving $\log$ are bounded by $cN$. Thus (\ref{eq: almost})
is at most 
\begin{equation}
\exp\left(o\left(N\right)+\sup_{m:\left|m\right|<1}\tilde{H}_{TAP}^{K}\left(m\right)+cM\right).\label{eq: last}
\end{equation}
This is then also an upper bound for (\ref{eq: integral}), which
shows that (\ref{eq: subpace FK}) and therefore $\tilde{F}_{N}\left(\beta,h_{N}\right)$
is bounded by $\exp\left(\sup_{m:\left|m\right|<1}\tilde{H}_{TAP}^{K}\left(m\right)+o\left(N\right)+\frac{c}{K}\right)$.
This implies (\ref{eq: 2-spin TAP UB K})
\end{proof}
We can now prove the upper bound Proposition \ref{prop: UB diag deterministic}
for free energy of the diagonal and deterministic Hamiltonian $\tilde{H}_{N}\left(\sigma\right)$,
by showing that the $\sup$ in (\ref{eq: 2-spin TAP UB K}) is bounded
above by that in (\ref{eq: suff to show diag deterministic}).
\begin{proof}[Proof of Proposition \ref{prop: UB diag deterministic}]
Fix $K\ge2$. For any $N\ge1$, consider the variational problem
\[
\sup_{m\in\mathbb{R}^{N}:\left|m\right|<1}\tilde{H}_{TAP}^{K}\left(m\right).
\]
Any local maximum $m$ of $\tilde{H}_{TAP}^{K}\left(m\right)$ must
satisfy
\[
\nabla\tilde{H}_{TAP}^{K}\left(m\right)=0,
\]
and
\begin{equation}
\nabla^{2}\tilde{H}_{TAP}^{K}\left(m\right)\text{ is negative semi-definite}.\label{eq: neg semi def}
\end{equation}
The gradient of $\tilde{H}_{TAP}^{K}$ is
\[
\nabla\tilde{H}_{TAP}^{K}\left(m\right)=\beta\nabla\tilde{H}_{N}\left(m\right)+N\tilde{h}_{N}-Nm\left(\frac{1}{1-\left|m\right|^{2}}+2\beta\mathcal{F}_{K}^{'}\left(\beta\left(1-\left|m\right|^{2}\right)\right)\right).
\]
By Lemma \ref{lem: FE K deriv} we have for all $m$ that
\[
\nabla\tilde{H}_{TAP}^{K}\left(m\right)=\beta\nabla\tilde{H}_{N}\left(m\right)+N\tilde{h}_{N}-N2\beta m\lambda_{K}\left(\beta\left(1-\left|m\right|^{2}\right)\right).
\]
Thus the Hessian $\nabla^{2}\tilde{H}_{N}^{K}\left(m\right)$ equals
\[
\beta\nabla^{2}\tilde{H}_{N}\left(m\right)-N2\beta I\lambda_{K}\left(\beta\left(1-\left|m\right|^{2}\right)\right)+4\beta^{2}Nmm^{T}\lambda_{K}^{'}\left(\beta\left(1-\left|m\right|^{2}\right)\right).
\]
For any local maximum $m$ let 
\[
A=\frac{1}{2N}\nabla^{2}\tilde{H}_{N}\left(m\right)-I\lambda_{K}\left(\beta\left(1-\left|m\right|^{2}\right)\right),
\]
and
\[
B=2m\left(m\right)^{T}\lambda_{K}^{'}\left(\beta\left(1-\left|m\right|^{2}\right)\right).
\]
Since $B$ is of rank one, the second largest eigenvalue $a_{N-1}$
of $A$ is bounded above by the largest eigenvalue of $A+B$. The
latter matrix is the Hessian at $m$ multiplied by a positive scalar,
so all its eigenvalues are non-positive. Thus $a_{N-1}\le0$.  Furthermore
$\frac{1}{2N}\nabla^{2}\tilde{H}_{N}\left(m\right)=\frac{1}{N}D$
where $D$ is the diagonal matrix with $D_{ii}=\theta_{i/N}$, so
the eigenvalues of $A$ are $\theta_{i/N}-\lambda_{K}\left(\beta\left(1-\left|m\right|^{2}\right)\right)$.
This shows that 
\[
\lambda_{K}\left(\beta\left(1-\left|m\right|^{2}\right)\right)\ge\theta_{1-\frac{1}{N}},
\]
at $m$ which are local maxima. Since
\[
\theta_{1-1/N}=\sqrt{2}+o\left(1\right),
\]
it follows from Lemma \ref{lem: lambda large is high temp} that we
must have for such $m$
\[
\beta\left(1-\left|m\right|^{2}\right)\le\frac{1}{\sqrt{2}},\text{ that is }\beta\left(m\right)\le\frac{1}{\sqrt{2}},
\]
(provided $N$ large enough depending on $K$), and by Lemma \ref{lem: F K uniform conv}
\[
\mathcal{F}_{K}\left(\beta\left(1-\left|m\right|^{2}\right)\right)\le\frac{1}{2}\beta^{2}\left(1-\left|m\right|^{2}\right)^{2}+\varepsilon_{K},
\]
where $\lim_{K\to\infty}\varepsilon_{K}=0$. Thus from (\ref{eq: 2-spin TAP UB K})
it holds for such $N$ that
\[
\tilde{F}_{N}\left(\beta,h_{N}\right)\le\frac{1}{N}\sup_{m\in\mathbb{R}^{N}:\left|m\right|<1,\beta\left(m\right)\le\frac{1}{\sqrt{2}}}\tilde{H}_{TAP}\left(m\right)+\varepsilon_{K}+\frac{c}{K}.
\]
We have shown that
\[
\limsup_{N\to\infty}\left\{ \frac{1}{N}\sup_{m\in\mathbb{R}^{N}:\left|m\right|<1,\beta\left(m\right)\le\frac{1}{\sqrt{2}}}\tilde{H}_{TAP}\left(m\right)-\tilde{F}_{N}\left(\beta,h_{N}\right)\right\} \le\varepsilon_{K}+\frac{c}{K},
\]
for all $K\ge2$. Since the left-hand side is independent of $K$,
it is in fact at most $0$. This implies (\ref{eq: suff to show diag deterministic}).
\end{proof}
This also completes the proof of the main upper bound Proposition
\ref{prop: 2-spin TAP UB}. Together with the lower bound Proposition
\ref{prop: 2-spin TAP LB} this proves our main result Theorem \ref{thm: main intro II}.

\section{\label{sec: TAPvarSol}Solution of the TAP-Plefka variational problem}

In this section we prove Lemma \ref{lem: TAP var sol}. By (\ref{eq: scaling})
it follows from a result for the maximum of the Hamiltonian with external
field on the unit sphere which we now state.
\begin{lem}
For $h,h_{1},h_{2},\ldots$ as in Theorem \ref{thm: main intro II}
we have
\begin{equation}
\sup_{\sigma:\left|\sigma\right|=1}\left\{ \beta\frac{1}{N}H_{N}\left(\sigma\right)+h_{N}\cdot\sigma\right\} \to\sqrt{h^{2}+2\beta^{2}},\label{eq: GS with ext field}
\end{equation}
in probability.
\end{lem}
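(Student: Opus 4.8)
The plan is to use the rotational invariance of the sphere and of the GOE to reduce to a diagonal one‑dimensional optimization, to solve that optimization \emph{exactly} at finite $N$ by completing the square, and then to pass to the limit using the semicircle law together with a concentration estimate for the external field. Writing $S_N=O_N\Lambda_N O_N^T$ with $\Lambda_N=\mathrm{diag}(\sqrt N\theta_1^N,\ldots,\sqrt N\theta_N^N)$ and substituting $\tau=O_N^T\sigma$ one obtains the almost sure identity
\[
\sup_{|\sigma|=1}\{\beta\tfrac1N H_N(\sigma)+h_N\cdot\sigma\}=\sup_{|\tau|=1}\Bigl\{\beta\sum_{i=1}^N\theta_i^N\tau_i^2+\tilde h_N\cdot\tau\Bigr\},
\]
where $\tilde h_N=O_N^Th_N$ is the external field written in the diagonalizing basis, cf. (\ref{eq: h tilde}). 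Since the eigenvector matrix $O_N$ of the GOE matrix $S_N$ is Haar‑distributed on $O(N)$ and independent of its eigenvalues, $\tilde h_N$ is uniformly distributed on the sphere of radius $h$ and independent of $\{\theta_i^N\}$. If $h=0$ the right‑hand side is $\beta\theta_N^N\to\sqrt2\beta$ and there is nothing more to do, so from now on I assume $h>0$.

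Next I would solve the diagonal problem exactly. For any $\lambda>2\beta\theta_N^N$ and every $|\tau|=1$ one has, by completing the square,
\[
\beta\sum_i\theta_i^N\tau_i^2+\tilde h_N\cdot\tau=\frac\lambda2-\frac12\sum_i(\lambda-2\beta\theta_i^N)\Bigl(\tau_i-\frac{\tilde h_{N,i}}{\lambda-2\beta\theta_i^N}\Bigr)^2+\frac12\sum_i\frac{\tilde h_{N,i}^2}{\lambda-2\beta\theta_i^N},
\]
and the middle term is nonpositive. Hence the supremum is at most $\phi_N(\lambda):=\frac\lambda2+\frac12\sum_i\tilde h_{N,i}^2/(\lambda-2\beta\theta_i^N)$, with equality exactly at $\tau_i=\tilde h_{N,i}/(\lambda-2\beta\theta_i^N)$. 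The function $S_N(\lambda):=\sum_i\tilde h_{N,i}^2(\lambda-2\beta\theta_i^N)^{-2}$ decreases continuously from $+\infty$ (as $\lambda\downarrow2\beta\theta_N^N$, using $\tilde h_{N,N}\ne0$ a.s.) to $0$, so there is a unique $\lambda_N^*>2\beta\theta_N^N$ with $S_N(\lambda_N^*)=1$; for this $\lambda$ the maximizing $\tau$ above has unit norm, so $\sup_{|\tau|=1}\{\cdots\}=\phi_N(\lambda_N^*)$ exactly, and $\lambda_N^*$ is also the minimizer of the convex function $\phi_N$.

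It then remains to show $\phi_N(\lambda_N^*)\to\sqrt{h^2+2\beta^2}$ in probability. For bounded coefficients $c_1,\ldots,c_N$ independent of $\tilde h_N$ the quadratic form $\sum_i\tilde h_{N,i}^2c_i$ has mean $\frac{h^2}{N}\sum_ic_i$ and, by concentration of Lipschitz functions on the sphere (or a direct second moment bound), variance $O(N^{-1})$; combined with the semicircle law (\ref{eq: semi-circle law}) and rigidity (\ref{eq: rigidity}) this gives, in probability and uniformly for $\lambda$ in compact subsets of $(2\sqrt2\beta,\infty)$, that $\sum_i\tilde h_{N,i}^2(\lambda-2\beta\theta_i^N)^{-k}\to h^2\int(\lambda-2\beta x)^{-k}\mu(dx)$ for $k=1,2$. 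The $k=2$ limit is continuous and strictly decreasing on $(2\sqrt2\beta,\infty)$, blows up at the edge (the singularity of $\mu(x)/(\lambda-2\beta x)^2$ at $x=\sqrt2$ is of non‑integrable order $3/2$) and vanishes at $+\infty$, hence equals $1$ at a unique $\lambda^*>2\sqrt2\beta$; comparing $S_N$ with its limit at $\lambda^*\pm\varepsilon$ forces $\lambda_N^*\to\lambda^*$ in probability, and then $\phi_N(\lambda_N^*)\to\phi(\lambda^*):=\frac{\lambda^*}2+\frac{h^2}{4\beta}g(\lambda^*/(2\beta))$, where $g(\nu)=\nu-\sqrt{\nu^2-2}$ is the semicircle Stieltjes transform of (\ref{eq: gdef}).

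Finally, writing $\nu^*=\lambda^*/(2\beta)$, the defining relation becomes $\frac{h^2}{4\beta^2}(-g'(\nu^*))=1$, that is $\frac{h^2g(\nu^*)}{4\beta^2}=\sqrt{(\nu^*)^2-2}$ since $-g'=g/\sqrt{\nu^2-2}$; therefore $\phi(\lambda^*)=\beta\nu^*+\frac{h^2}{4\beta}g(\nu^*)=\beta\nu^*+\beta\sqrt{(\nu^*)^2-2}$, and solving the relation yields $\nu^*=(h^2+4\beta^2)/(2\beta\sqrt{h^2+2\beta^2})$ and $\sqrt{(\nu^*)^2-2}=h^2/(2\beta\sqrt{h^2+2\beta^2})$, so $\phi(\lambda^*)=(h^2+2\beta^2)/\sqrt{h^2+2\beta^2}=\sqrt{h^2+2\beta^2}$, proving (\ref{eq: GS with ext field}). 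The main obstacle in this scheme is the limiting step: one needs the concentration of $S_N(\lambda)$ and $\phi_N(\lambda)$ uniformly in $\lambda$ in a neighbourhood of $\lambda^*$, and — more delicately — one must prevent $\lambda_N^*$ from drifting down to the spectral edge $2\sqrt2\beta$ as $N\to\infty$, which is exactly what the comparison of $S_N$ with its limit at the two interior points $\lambda^*\pm\varepsilon$ is designed to achieve.
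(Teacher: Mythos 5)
Your proposal is correct and follows essentially the same route as the paper: diagonalize via rotational invariance of the sphere and of the GOE, parametrize the maximizer by a multiplier $\lambda$ fixed by the unit-norm constraint, and pass to the limit using the semicircle law together with the fact that $\tilde h_N$ is uniform on the sphere of radius $h$ and independent of the spectrum. Your completion-of-the-square identity is a cleaner, fully rigorous finite-$N$ version of the paper's appeal to Lagrange multipliers, and your closing algebra (via $-g'=g/\sqrt{\nu^2-2}$) arrives at the same value $\sqrt{h^2+2\beta^2}$.
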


\begin{proof}
We work in the diagonalizing basis of $S_{N}$ and note that the left-hand
side of (\ref{eq: GS with ext field}) equals
\begin{equation}
\sup_{\sigma:\left|\sigma\right|=1}\left\{ \beta\sum_{i=1}^{N}\theta_{i/N}\sigma_{i}^{2}+\tilde{h}_{N}\cdot\sigma\right\} +o\left(1\right),\label{eq: diagonalized shell max}
\end{equation}
where, as in Section \ref{sec: diag}, $\tilde{h}_{N}$ is the vector
$h_{N}$ written in the diagonalizing basis and we have used (\ref{eq: rigidity}).
The case $h=0$ then follows trivially since $\theta_{1}=\sqrt{2}$,
so we assume in the sequel that $h>0$. For any $\lambda>\sqrt{2}$
let
\[
\sigma_{i}\left(\lambda\right)=\frac{1}{2\beta}\frac{\left(\tilde{h}_{N}\right)_{i}}{\lambda-\theta_{i/N}}.
\]
Using Lagrange multipliers the maximizer of (\ref{eq: diagonalized shell max})
can be shown to be $\sigma_{i}=\sigma_{i}\left(\lambda_{N}\right)$
where $\lambda_{N}>\sqrt{2}$ is the number such that $\sum_{i=1}^{N}\sigma_{i}^{2}\left(\lambda_{N}\right)=1$.
By rotational symmetry the $\mathbb{P}$-law of $\left(\tilde{h}_{N}\right)_{i}$
is that of a uniform random vector on $\left\{ x\in\mathbb{R}^{N}:\left|x\right|=h\right\} $.
Using this one can show that for any $\lambda>\sqrt{2}$
\[
\left|\sum_{i=1}^{N}\sigma_{i}\left(\lambda\right)^{2}-\frac{h^{2}}{2\beta}\frac{1}{N}\sum_{i=1}^{N}\frac{1}{\left(\lambda-\theta_{i/N}\right)^{2}}\right|\to0,\text{ in probability}.
\]
Also for $\lambda>\sqrt{2}$
\[
\sum_{i=1}^{N}\frac{1}{\left(\lambda-\theta_{i/N}\right)^{2}}\to\int_{-\sqrt{2}}^{\sqrt{2}}\frac{\mu\left(x\right)}{\left(\lambda-x\right)^{2}}dx=\frac{\lambda}{\sqrt{\lambda^{2}-2}}-1,
\]
and since for 
\[
\tilde{\lambda}=\sqrt{\frac{2}{1-\left(1+\frac{4\beta^{2}}{h^{2}}\right)^{-2}}},
\]
and $\lambda=\tilde{\lambda}$ we have $\lambda/\sqrt{\lambda^{2}-2}-1=2\beta/h^{2}$,
it follows that 
\[
\lambda_{N}\to\tilde{\lambda},\text{ in probability}.
\]

Similarly for any $\lambda>\sqrt{2}$ we have that
\[
\sum_{i=1}^{N}\theta_{i/N}\sigma_{i}\left(\lambda\right)^{2}\to\frac{h^{2}}{2\beta}\int_{-\sqrt{2}}^{\sqrt{2}}\frac{x}{\left(\lambda-x\right)^{2}}\mu\left(x\right)dx=\frac{h^{2}}{2\beta^{2}}\left(\frac{\lambda^{2}-1}{\sqrt{\lambda^{2}-2}}-\lambda\right),
\]
and
\[
\sum_{i=1}^{N}\left(\tilde{h}_{N}\right)_{i}\sigma_{i}\left(\lambda\right)\to\frac{h^{2}}{2\beta}\int_{-\sqrt{2}}^{\sqrt{2}}\frac{\mu\left(x\right)}{\lambda-x}dx=\frac{h^{2}}{2\beta}\left(\lambda-\sqrt{\lambda^{2}-2}\right),
\]
both in probability. This shows that
\[
\beta\sum_{i=1}^{N}\theta_{i/N}\sigma_{i}\left(\lambda_{N}\right)^{2}+\tilde{h}_{N}\cdot\sigma\left(\lambda_{N}\right)\to\frac{h^{2}}{2\beta}\left(\frac{\tilde{\lambda}^{2}-1}{\sqrt{\tilde{\lambda}^{2}-2}}-\tilde{\lambda}\right)+\frac{h^{2}}{2\beta}\left(\tilde{\lambda}-\sqrt{\tilde{\lambda}^{2}-2}\right),
\]
in probability, and the right-hand side simplifies to $\sqrt{h^{2}+2\beta^{2}}$.
\end{proof}
\bibliographystyle{plain}

\end{document}